\theoremstyle{plain} 
\newtheorem{theorem}{Theorem}[section]
\newtheorem{lemma}[theorem]{Lemma}
\newtheorem{corollary}[theorem]{Corollary}
\newtheorem{proposition}[theorem]{Proposition}
\theoremstyle{definition}
\newtheorem{definition}[theorem]{Definition}
\newtheorem{remark}[theorem]{Remark}
\newtheorem*{ackno}{Acknowledgement}
\newtheorem*{condition}{`Lifting Condition'}
\numberwithin{equation}{section}
\newcommand{\N}{\mathbb{N}}
\newcommand{\Z}{\mathbb{Z}}
\newcommand{\Q}{\mathbb{Q}}
\newcommand{\C}{\mathbb{C}}
\newcommand{\GL}{\mathsf{GL}}
\newcommand{\aGm}{\mathsf{G}_\mathrm{m}}
\newcommand{\aGa}{\mathsf{G}_\mathrm{a}}
\newcommand{\aG}{\mathsf{G}}
\newcommand{\aH}{\mathsf{H}}
\newcommand{\aN}{\mathsf{N}}
\newcommand{\aT}{\mathsf{T}}
\newcommand{\mcalB}{\mathcal{B}}
\newcommand{\fk}{\mathcal{K}}
\newcommand{\fo}{\mathcal{O}}
\newcommand{\fp}{\mathfrak{p}}
\newcommand{\ee}{\mathbf{e}}
\newcommand{\ff}{\mathbf{f}}
\renewcommand{\gg}{\mathbf{g}}
\newcommand{\coloneqq}{\mathrel{\mathop:}=}
\DeclareMathOperator{\Aut}{Aut} 
\DeclareMathOperator{\Hom}{Hom} 
\DeclareMathOperator{\Rad}{Rad}
\DeclareMathOperator{\Mat}{Mat}
\DeclareMathOperator{\diag}{diag}
\renewcommand{\phi}{\varphi} 
\renewcommand{\rho}{\varrho} 
\renewcommand{\theta}{\vartheta}
\begin{document}

\title[A family of class-$2$ nilpotent groups]{A family of class-$2$
  nilpotent groups, their automorphisms and pro-isomorphic zeta
  functions}

\author{Mark N.\ Berman}\address{Department of Mathematics, Ort Braude
  College, P.O. Box 78, Snunit St., 51, Karmiel 2161002, Israel}
\email{berman@braude.ac.il}

\author{Benjamin Klopsch} \address{Mathematisches Institut,
  Heinrich-Heine-Universit\"at D\"usseldorf, 40225 D\"usseldorf, Germany}
\email{klopsch@math.uni-duesseldorf.de}

\author{Uri Onn} \address{Department of Mathematics, Ben Gurion
  University of the Negev, Beer-Sheva 84105, Israel}
\email{urionn@math.bgu.ac.il}

\keywords{Nilpotent group, pro-isomorphic zeta function, local
  functional equation.}

\subjclass[2010]{Primary 11M41; Secondary 20E07, 20F18, 20F69, 17B40,
  17B45}

\maketitle

\begin{abstract}
  The pro-isomorphic zeta function $\zeta^\wedge_\Gamma(s)$ of a
  finitely generated nilpotent group $\Gamma$ is a Dirichlet
  generating function that enumerates finite-index subgroups whose
  profinite completion is isomorphic to that of~$\Gamma$.  Such zeta
  functions can be expressed as Euler products of $p$-adic integrals
  over the $\mathbb{Q}_p$-points of an algebraic automorphism group
  associated to $\Gamma$.  In this way they are closely related to
  classical zeta functions of algebraic groups over local fields.

  We describe the algebraic automorphism groups for a natural family
  of class\nobreakdash-$2$ nilpotent groups $\Delta_{m,n}$ of Hirsch
  length $\binom{m+n-2}{n-1}+\binom{m+n-1}{n-1} + n$ and central
  Hirsch length~$n$; these groups can be viewed as generalisations of
  $D^*$-groups of odd Hirsch length.  General $D^*$-groups, that is
  `indecomposable' finitely generated, torsion-free class-$2$
  nilpotent groups with central Hirsch length~$2$, were classified up
  to commensurability by Grunewald and Segal. 

  We calculate the local pro-isomorphic zeta functions for the groups
  $\Delta_{m,n}$ and obtain, in particular, explicit formulae for the
  local pro-isomorphic zeta functions associated to $D^*$-groups of
  odd Hirsch length.  From these we deduce local functional equations;
  for the global zeta functions we describe the abscissae of
  convergence and find meromorphic continuations.  We deduce that the
  spectrum of abscissae of convergence for pro-isomorphic zeta
  functions of class-$2$ nilpotent groups contains infinitely many
  cluster points.  For instance, the global abscissa of convergence of
  the pro-isomorphic zeta function of a $D^*$-group of Hirsch length
  $2m+3$ is shown to be $6-\frac{15}{m+3}$.
\end{abstract}

\section{Introduction}\label{section:introduction}

Zeta functions provide a compact and powerful way to encode
information about the lattice of finite-index subgroups of a finitely
generated group.  Our attention focuses on finitely generated,
torsion-free nilpotent groups, for which a rich theory is available;
see~\cite{dSWo08} and references therein.  In the current paper we are
interested in the \emph{pro-isomorphic zeta function} of such a group
$\Gamma$, i.e., the Dirichlet generating function
\begin{equation*}
  \zeta_\Gamma^\wedge(s) =  \sum_{n=1}^\infty \frac{a_n^\wedge(\Gamma)}{n^s},
\end{equation*}
where $a_n^\wedge(\Gamma)$ denotes the number of subgroups $\Delta$ of
index $n$ in $\Gamma$ such that the profinite completion
$\widehat{\Delta}$ is isomorphic to the profinite completion
$\widehat{\Gamma}$ of the ambient group. 

It follows from the nilpotency of $\Gamma$ that one obtains an Euler
product decomposition over all rational primes $p$:
\begin{equation} \label{equ:local-factor} \zeta_\Gamma^\wedge(s) =
  \prod_p \zeta_{\Gamma,p}^\wedge(s), \qquad \text{where} \quad
  \zeta_{\Gamma,p}^\wedge(s) = \sum_{k=0}^\infty
  a_{p^k}^\wedge(\Gamma) p^{-ks}
\end{equation}
is called the \emph{local zeta function} at a prime~$p$;
see~\cite{GrSeSm88}.  Furthermore, each of the local factors
$\zeta_{\Gamma,p}^\wedge(s)$ is a rational function over $\Q$ in
$p^{-s}$.

In comparison to other zeta functions of groups, a unique feature of
pro-isomorphic zeta functions $\zeta^\wedge_{\Gamma}(s)$ is their
relation to a rather different object of independent interest.  Let
$\fk$ be a number field with ring of integers $\fo$, and let $\aG
\subseteq \GL_d$ be an affine group scheme over $\fo$ with a fixed
embedding into $\GL_d$.  For a finite prime $\fp$, let $\fk_{\fp}$
denote the completion at $\fp$, and let $\fo_\fp$ denote the valuation
ring of~$\fk_\fp$.  Putting $\aG_\fp = \aG(\fk_\fp)$, $\aG^+_\fp =
\aG(\fk_\fp) \cap \Mat_d(\fo_\fp)$ and $\aG(\fo_\fp) = \aG(\fk_\fp)
\cap \GL_d(\fo_\fp)$, one defines the zeta function of $\aG$ at $\fp$
as
\begin{equation}\label{equ:zeta-alg-group}
  \mathcal{Z}_{\aG,\fp}(s) = \int_{\aG^+_\fp} \lvert \det(g)
  \rvert_\fp^s \, d\mu_{\aG_\fp}(g),
\end{equation}
where $|.|_\fp$ is the $\fp$-adic absolute value on $\fk_\fp$ and
$\mu_{\aG_\fp}$ is the right Haar measure on $\aG(k_\fp)$, normalised
so that $\mu_{\aG_\fp}(\aG(\fo_\fp))=1$.  In the past century,
$\mathcal{Z}_{\aG,\fp}(s)$ was studied by Hey, Weil, Tamagawa, Satake,
Macdonald and Igusa~\cite{He29,We62,Ta63,Sa63,Ma79,Ig89}, for
independent reasons.  Grunewald, Segal and Smith~\cite{GrSeSm88}
recognised that these classical zeta functions of algebraic groups
relate to pro-isomorphic zeta functions of nilpotent groups; see
Section~\ref{subsec:Lie-corr}.  Studying the pro-isomorphic zeta
function of a finitely generated nilpotent group typically involves
two steps: first one needs to understand an associated algebraic
automorphism group $\aG$ which comprises an affine group scheme
over~$\Z$, and subsequently one studies the $\fp$-adic
integral~\eqref{equ:zeta-alg-group}.

\subsection{Main results} \label{sec:groups-Delta}

In this paper we consider a certain family of finitely generated,
torsion-free class\nobreakdash-$2$ nilpotent groups~$\Delta_{m,n}$,
defined below, and set about explicitly computing the associated
algebraic automorphism groups and local pro-isomorphic zeta functions.

Our original interest concerned finitely generated, torsion-free
class-$2$ nilpotent groups of central Hirsch length~$2$; we refer to
such groups as $D^*$-groups.  Up to commensurability, $D^*$-groups
were classified by Grunewald and Segal~\cite{GrSe84} in terms of
indecomposable constituents of their radicable hulls; the latter are
called $\mathcal{D}^*$-groups and our terminology is a natural
adaptation.  For any integer $m \geq 1$, there is a unique
commensurability class of `indecomposable' $D^*$-groups of Hirsch
length $2m+3$, represented by the group 
\[
\Gamma_m = \langle a_1, \ldots, a_m, b_1, \ldots, b_{m+1}, c_1,
c_2 \mid R \rangle,
\]
where the finite set of relations $R$ specifies that $c_1, c_2$ are
central and
\[ [a_i,a_j]= 1, \quad [b_i,b_j]= 1, \quad [a_i,b_j]=
\begin{cases}
c_1 & \text{if $i=j$,} \\
c_2 & \text{if $i+1=j$,} \\
1   & \text{otherwise,}
\end{cases} 
\qquad \text{for all relevant indices.}
\]

For these $D^*$-groups of odd Hirsch length we obtain the following
results.

\begin{theorem}\label{theorem:dstar-zeta}
  Let $\Gamma_m$ be the `indecomposable' $D^*$-group of Hirsch length
  $2m+3$ defined above, for $m \geq 1$.  Then, for every rational
  prime~$p$,
  \[
  \zeta_{\Gamma_m,p}^\wedge(s) = \frac{1+p^{\frac{1}{2} (9m^2+m-2) -
      (m^2+2m-1)s}}{\big(1 - p^{\frac{1}{2} m(9m+1) - (m^2+2m-1)s}
    \big) \big( 1-p^{4m+2-(m+2)s} \big) \big( 1-p^{6m+2-(m+3)s}
    \big)}.
  \]
\end{theorem}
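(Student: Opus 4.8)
The plan is to follow the two stages recalled in Section~\ref{section:introduction}: first invoke the structural description of the algebraic automorphism group of $\Gamma_m$ obtained earlier in the paper, then evaluate the associated $p$-adic integral. Here $\Gamma_m$ is the member $\Delta_{m,2}$ of our family, and its associated $\Z$-Lie ring $L_m$ decomposes as $L_m = L_1 \oplus L_2$ with $L_2 = [L_m,L_m] = Z(L_m)$ of rank $2$ and $L_1$ of rank $2m+1$; choosing the graded complement we write $L_1 = E \oplus F$, where $E$ is spanned by the images of $a_1,\dots,a_m$ and $F$ by those of $b_1,\dots,b_{m+1}$, so that the commutator map $\wedge^2 L_1 \to L_2$ (vanishing on $\wedge^2 E$ and on $\wedge^2 F$) is carried by the Kronecker pencil $M_1 = (I_m \mid 0)$, $M_2 = (0 \mid I_m)$. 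Equivalently, for a fixed $2$-dimensional space $V$ one has $F \cong \mathrm{Sym}^m V$, $E \cong \mathrm{Sym}^{m-1} V$ (up to $\det$-twists) and the commutator becomes the $\GL_2$-equivariant contraction map, the pencil space being dual to $L_2$. Consequently $\aG = \Aut(L_m)$ is a semidirect product $\aG = \aH \ltimes \aN$, where $\aN$ is the unipotent radical — containing the central homomorphisms $\Hom(L_1,L_2)$ together with the ``Kronecker shears'' in $\Hom(E,F)\oplus\Hom(F,E)$ cut out by symmetry conditions — and $\aH$ is a reductive complement, $\aH \cong \GL_2 \times \aGm$ (or a central quotient thereof), the $\GL_2$ being the symmetry group of the pencil and the central $\aGm$ the torus $\lambda \mapsto (\lambda^{-1}\,\mathrm{id}_E,\, \lambda\,\mathrm{id}_F,\, \mathrm{id}_{L_2})$.

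Next I set up the integral. By the Grunewald--Segal--Smith correspondence, $\zeta_{\Gamma_m,p}^\wedge(s) = \int_{\aG^+_p} \lvert \det(g)\rvert_p^s \, d\mu_{\aG_p}(g)$, with $\det$ taken for the action on $L_m \cong \Z^{2m+3}$. Since $L_2$ is central, every $g \in \aG_p$ has block shape $g = \left(\begin{smallmatrix}\bar g & 0 \\ \ast & \theta(\bar g)\end{smallmatrix}\right)$ on $L_1 \oplus L_2$, with $\theta(\bar g) \in \GL(L_2)$ determined by the linear part $\bar g \in \GL(L_1)$, and $\lvert\det(g)\rvert_p = \lvert\det(\bar g)\rvert_p \, \lvert\det(\theta(\bar g))\rvert_p$. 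Integrating out the $\Hom(L_1,L_2)$-block $\ast$ against the modular character of $\aG$ multiplies the integrand by $\lvert\det(\bar g)\rvert_p^{-2}$ and replaces $d\mu_{\aG_p}$ by the normalised measure on $\overline{\aG}_p$, so that
\[
\zeta_{\Gamma_m,p}^\wedge(s) = \int_{\overline{\aG}^+_p} \lvert\det(\bar g)\rvert_p^{\,s-2}\,\lvert\det(\theta(\bar g))\rvert_p^{\,s}\; d\mu_{\overline{\aG}_p}(\bar g),
\]
the domain being $\{\bar g \in \overline{\aG}(\Q_p) : \bar g \text{ and } \theta(\bar g) \text{ integral}\}$. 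Parametrising $\bar g$ by a maximal torus element, the non-trivial Weyl element of the $\GL_2$-factor, and unipotent coordinates, the conditions ``$\bar g$ and $\theta(\bar g)$ integral'' become explicit valuation inequalities, while $\lvert\det(\bar g)\rvert_p$ and $\lvert\det(\theta(\bar g))\rvert_p$ become monomials in the torus valuations, with exponents dictated by the $\GL_2$-weights of $E \cong \mathrm{Sym}^{m-1} V$ and $F \cong \mathrm{Sym}^m V$ — via $\det(\mathrm{Sym}^{m-1} V) = \det(V)^{\binom m2}$ and $\det(\mathrm{Sym}^m V) = \det(V)^{\binom{m+1}{2}}$. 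This is exactly where the exponents $\tfrac12 m(9m+1)$, $m^2+2m-1$, $4m+2$, $6m+2$, $m+2$, $m+3$ originate.

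I then evaluate the integral by iterated integration. The unipotent coordinates drop out: the integrand is constant along them, and after the normalisation their defining conditions reduce to plain integrality, contributing volume $1$. The three torus directions each produce a geometric series, contributing a factor $(1-p^{\alpha_i-\beta_i s})^{-1}$, which accounts for the three denominator factors. The remaining $\GL_2$-part is a rank-one $\mathcal{Z}_{\GL_2}$-type integral over $\Mat_2(\Z_p) \cap \GL_2(\Q_p)$ with the determinant twist prescribed by the action on $E \oplus F \oplus L_2$; summing its two Bruhat cells — corresponding to the trivial and the non-trivial Weyl element — contributes the two-term numerator $1 + p^{\frac12(9m^2+m-2) - (m^2+2m-1)s}$ over one of the torus factors. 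Collecting all contributions and simplifying — noting $\tfrac12(9m^2+m-2) = \tfrac12 m(9m+1) - 1$ — yields the asserted rational function; as a consistency check one verifies the local functional equation $\zeta_{\Gamma_m,p}^\wedge(s)\big|_{p\to p^{-1}} = -\,p^{(10m+5)-(2m+5)s}\,\zeta_{\Gamma_m,p}^\wedge(s)$.

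The main obstacle is twofold. First, one must determine $\aG$ precisely — in particular the unipotent radical $\aN$ and the symmetry conditions defining the Kronecker shears — and verify that, after passing to the coset decomposition, these coordinates genuinely factor out of the integral; this rests on the rigidity of the (indecomposable, preprojective) Kronecker module underlying the pencil. Second, one must compute the $\GL_2$-weights of $E$, $F$ and $L_2$, hence the exponents of $\lvert\det(\bar g)\rvert_p$ and $\lvert\det(\theta(\bar g))\rvert_p$, together with the non-unimodularity contribution, exactly, without sign or off-by-one errors; the delicate point is confirming that the $\GL_2$-subintegral collapses to the clean shape above rather than producing spurious $(1-p^{-1})$-type factors — the phenomenon that makes the final answer so compact.
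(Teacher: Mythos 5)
Your high-level plan matches the paper's: identify $\Gamma_m$ with $\Delta_{m,2}$, compute the algebraic automorphism group $\aG = \aN \rtimes \aH$ of the associated Lie ring with $\aH \cong \GL_2 \times \aGm$ acting on $E \cong \mathrm{Sym}^{m-1}V$ and $F \cong \mathrm{Sym}^m V$, reduce the $p$-adic integral to the reductive part via du Sautoy--Lubotzky, and evaluate by the Iwahori--Matsumoto Bruhat decomposition. But the reduction step is carried out incorrectly, and the error is precisely where the real work of the theorem lies.

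First, the intermediate formula is wrong. Writing $g = \left(\begin{smallmatrix}\bar g & 0 \\ \ast & \theta(\bar g)\end{smallmatrix}\right)$ and integrating out the $\Hom(L_1,L_2)$-block $\ast$ against right Haar measure produces the factor $\mu\{\ast : \ast\,\theta(\bar g) \text{ integral}\} = |\det(\theta(\bar g))|_p^{-(r_1+r_2)} = |\det(\theta(\bar g))|_p^{-(2m+1)}$, not $|\det(\bar g)|_p^{-2}$ as you assert. Since $N_2 = \Hom(L_1,L_2)$ is normal and abelian, there is no modular twist in the decomposition of right Haar measure; the factor comes purely from the measure of the fibre $\{\ast : \ast\,\theta(\bar g) \text{ integral}\}$. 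Concretely, with $\det\bar g = \lambda^{-1}(\det A)^{m^2}$ and $\det\theta(\bar g) = \det A$ independent torus parameters, $|\det\bar g|^{-2}$ and $|\det A|^{-(2m+1)}$ do not agree. Your integrand therefore carries the wrong exponents of both $|\lambda|$ and $|\det A|$.

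Second, and more seriously, you dismiss the Kronecker-shear block $C$ (the $\Hom(E,F)$-part of $\aN$ cut out by the symmetry conditions $c_{\ee,\ff}=b_{\ee+\ff}$) as contributing volume $1$. This is false, and it is exactly the point the paper emphasizes. The measure $\theta_1(h)$ of the set of admissible shears whose conjugate lies in integral matrices is a genuinely non-trivial, \emph{piecewise}-character on the torus — the paper computes $\log_q\theta_1(\rho(\xi(\pi))) = \tfrac{m(m+1)}{2}e_1 + 2m\,e_2$ for $n=2$ — and it is precisely this term that supplies the crucial $\tfrac{m(m+1)}{2}$ piece of the exponent $\tfrac{1}{2}m(9m+1) = A_1$. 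Without it the numerator and the $(1-X_1)$-factor cannot come out right. (Incidentally, the unipotent radical contains a block only in $\Hom(E,F)$, not in $\Hom(E,F)\oplus\Hom(F,E)$, as one sees from Lemma~\ref{lem:autom-trival-on-centre}.) So while you correctly isolate the ``main obstacle'', the sketch as written suppresses rather than resolves it: with the wrong intermediate integrand and $\theta_1$ replaced by $1$, the asserted exponents would not emerge.
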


\begin{remark} We remark that the `indecomposable' $D^*$-group
  $\Gamma_1$ of Hirsch length $5$ is none other than the Grenham group
  of the same Hirsch length whose local pro-isomorphic zeta functions
  have previously been determined: indeed, setting $m=1$ in
  Theorem~\ref{theorem:dstar-zeta} or setting $d=3$ in the formula
  appearing in \cite[Section~3.3.13.2]{Be05}, we obtain the same
  expression
  \[
  \zeta_{\Gamma_1,p}^\wedge(s) = \frac{1}{(1-p^{4-2s})(1-p^{5-2s})(1-p^{6-3s})}.
  \]
\end{remark}

\begin{corollary} \label{cor:abscissa} Let $\Gamma_m$ be the
  `indecomposable' $D^*$-group of Hirsch length $2m+3$ defined above,
  for $m\geq 1$.  Then the pro-isomorphic zeta function
  $\zeta_{\Gamma_m}^\wedge(s)$ satisfies local functional equations
  and admits meromorphic continuation to the entire complex plane.  It
  has abscissa of convergence $3$, if $m=1$, and $6-\frac{15}{m+3}$,
  if $m \geq 2$.
\end{corollary}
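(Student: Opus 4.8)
The plan is to read the entire statement off the closed formula for the local factors provided by Theorem~\ref{theorem:dstar-zeta}; the only point that will need genuine care is the verification that a certain pole is not cancelled. I would first fix notation, writing the numerator and the three denominator factors of that formula as $1 + p^{\alpha_0 - \beta_0 s}$ and $1 - p^{\alpha_i - \beta_i s}$ for $i = 1,2,3$, where
\[
(\alpha_0,\beta_0) = \big(\tfrac{9m^2+m-2}{2},\, m^2+2m-1\big), \qquad (\alpha_1,\beta_1) = \big(\tfrac{9m^2+m}{2},\, m^2+2m-1\big),
\]
\[
(\alpha_2,\beta_2) = (4m+2,\, m+2), \qquad (\alpha_3,\beta_3) = (6m+2,\, m+3),
\]
so that $\beta_i > 0$ for all $i$ and $\alpha_0 < \alpha_1$. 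For the local functional equations I would use that, under $p \mapsto p^{-1}$ (which sends $p^{-s}$ to $p^{s}$), one has the elementary identities $1 - p^{-\alpha+\beta s} = -\,p^{-\alpha+\beta s}(1 - p^{\alpha-\beta s})$ and $1 + p^{-\alpha+\beta s} = p^{-\alpha+\beta s}(1 + p^{\alpha-\beta s})$; applying these to the four factors of Theorem~\ref{theorem:dstar-zeta}, with the three denominator factors contributing a sign $(-1)^3$, and adding up the monomial prefactors gives
\[
\zeta_{\Gamma_m,p}^\wedge(s)\big|_{p \to p^{-1}} = -\,p^{\,(\alpha_1+\alpha_2+\alpha_3-\alpha_0) - (\beta_1+\beta_2+\beta_3-\beta_0)s}\,\zeta_{\Gamma_m,p}^\wedge(s) = -\,p^{\,(10m+5) - (2m+5)s}\,\zeta_{\Gamma_m,p}^\wedge(s),
\]
which is the asserted family of local functional equations.

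For the meromorphic continuation I would feed the formula of Theorem~\ref{theorem:dstar-zeta} into the Euler product~\eqref{equ:local-factor} and use $\prod_p(1-p^{-w})^{-1} = \zeta(w)$ together with $1+x = (1-x^2)/(1-x)$. This telescopes $\zeta_{\Gamma_m}^\wedge(s)$, on its half-plane of absolute convergence, into the finite product of translates of the Riemann zeta function
\[
\zeta_{\Gamma_m}^\wedge(s) = \frac{\zeta(\beta_0 s - \alpha_0)}{\zeta(2\beta_0 s - 2\alpha_0)}\cdot \zeta(\beta_1 s - \alpha_1)\cdot \zeta(\beta_2 s - \alpha_2)\cdot \zeta(\beta_3 s - \alpha_3),
\]
which, since $\zeta$ is meromorphic on $\C$ with its only pole at $1$, extends $\zeta_{\Gamma_m}^\wedge(s)$ meromorphically to all of $\C$. (For $m = 1$ the factor $\zeta(2\beta_0 s - 2\alpha_0) = \zeta(4s-8)$ cancels $\zeta(\beta_3 s - \alpha_3) = \zeta(4s-8)$, leaving $\zeta(2s-4)\zeta(2s-5)\zeta(3s-6)$, consistent with the remark above.)

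For the abscissa of convergence I would use that the Dirichlet series $\sum_n a_n^\wedge(\Gamma_m)\,n^{-s}$, having non-negative coefficients, is holomorphic throughout its open half-plane of convergence: its abscissa of convergence is therefore $\le M$ whenever the Euler product~\eqref{equ:local-factor} converges absolutely on $\{\Re(s) > M\}$, and $\ge M$ whenever the meromorphic continuation has a pole at the real point $s = M$. The expansion $\zeta_{\Gamma_m,p}^\wedge(s) - 1 = \sum_{i=0}^{3} p^{\alpha_i - \beta_i s} + O\big(p^{\,2\max_i(\alpha_i - \beta_i\Re s)}\big)$ as $p \to \infty$, together with $\beta_i > 0$ for all $i$, shows the product converges absolutely exactly for $\Re(s) > M_m := \max_{0\le i\le 3}(\alpha_i+1)/\beta_i$; granting, as checked in the final paragraph, that the continuation has a pole at $s = M_m$, the abscissa of convergence equals $M_m$. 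To evaluate $M_m$, note that $i = 0$ is dominated by $i = 1$ since $\alpha_0 < \alpha_1$ and $\beta_0 = \beta_1$; that $(\alpha_1+1)/\beta_1 - (\alpha_2+1)/\beta_2$ has the sign of $m^3 - 3m^2 + 10$, which is positive for all $m \ge 1$, so $i = 2$ is dominated by $i = 1$; and that $(\alpha_3+1)/\beta_3 - (\alpha_1+1)/\beta_1$ has the sign of $3m^3 + 2m^2 - 5m - 12$, which is negative for $m = 1$ and positive for $m \ge 2$. Hence $M_1 = (\alpha_1+1)/\beta_1 = 3$, whereas $M_m = (\alpha_3+1)/\beta_3 = \tfrac{6m+3}{m+3} = 6 - \tfrac{15}{m+3}$ for $m \ge 2$.

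The substantive input throughout is Theorem~\ref{theorem:dstar-zeta} itself; granting it, the deduction above is essentially bookkeeping, and the one step I expect to need real care is the claim that the pole at $s = M_m$ survives. For this I would note that $(\alpha_0+1)/\beta_0 < M_m$ forces $2\beta_0 M_m - 2\alpha_0 > 2$, so the denominator factor $\zeta(2\beta_0 s - 2\alpha_0)$ is finite and non-zero at $s = M_m$; moreover every numerator argument $\beta_i M_m - \alpha_i$ is $\ge 1$, so no numerator factor vanishes there; hence the numerator factor(s) with $(\alpha_i+1)/\beta_i = M_m$ contribute a genuine pole (simple, or multiple in case of a tie), which is what was needed.
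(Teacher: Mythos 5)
Your proposal is correct, and it actually patches a small gap in how the paper derives this corollary. The paper proves the general Corollary~\ref{corollary:mero-cont} for $\Delta_{m,n}$ (via Lemmata~\ref{lem:maximum-A-B} and \ref{lem:appendix-inequ}, using the $\widetilde{A}_j/\widetilde{B}_j$ and $A_i/B_i$ bookkeeping), and then obtains Corollary~\ref{cor:abscissa} by specializing to $n=2$; likewise the local functional equation follows by specializing Corollary~\ref{corollary:fn-eq}, which is proved via the Weyl-group involution $w\mapsto ww_0$. Your proof instead works entirely from the closed $n=2$ formula of Theorem~\ref{theorem:dstar-zeta}. The key difference is the treatment of meromorphic continuation: Corollary~\ref{corollary:mero-cont} only asserts continuation to the half-plane $\mathrm{Re}(s)>\beta(m,n)$, which does not by itself yield the assertion in Corollary~\ref{cor:abscissa} of continuation to all of $\C$. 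Your factorization $1+x = (1-x^2)/(1-x)$ re-expresses the global zeta function as a finite product and quotient of shifted Riemann zeta functions, which settles the full-plane continuation in one stroke; it also makes the abscissa computation an elementary comparison of the quantities $(\alpha_i+1)/\beta_i$, with the sign analysis of $m^3-3m^2+10$ and $3m^3+2m^2-5m-12$ precisely matching the dichotomy $m=1$ versus $m\ge 2$. Your argument that the pole at $s=M_m$ is not cancelled (no numerator argument $\beta_i M_m - \alpha_i$ falls below $1$, and $2\beta_0 M_m - 2\alpha_0 > 2$ because $i=0$ is strictly dominated) is sound. As a small remark: since you establish strict inequalities $(\alpha_i+1)/\beta_i < M_m$ for $i$ not achieving the maximum, the pole is in fact always simple, so the hedge about possible multiplicity in a tie is unnecessary (though harmless).
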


The occurrence of local functional equations for pro-isomorphic zeta
functions is a widespread but not ubiquitous phenomenon; cf.\
\cite{BeKl15}.  In the present instance it refers to the fact that
\[
\zeta_{\Gamma_m,p}^\wedge(s) \vert_{p \to p^{-1}} = - p^{10m+5 -
  (2m+5)s} \, \zeta_{\Gamma_m,p}^\wedge(s) \qquad \text{for every
  rational prime $p$},
\]
using the formula in Theorem~\ref{theorem:dstar-zeta}.
The theorem and its corollary significantly widen
the scope of known examples.  In particular, they indicate that there is a
rich spectrum of abscissae of convergence for pro-isomorphic zeta
functions of class-$2$ nilpotent groups; cf.\
\cite[Problem~1.3]{dSGr06}.  

\smallskip

In fact, we study a much larger family of class-$2$ nilpotent groups,
generalising $D^*$-groups of odd Hirsch length in the following way.
Let $m,n \in \N$ with $n \geq 2$.  Put
\begin{align*}
  \mathbf{E} & = \{ \ee \mid \ee = (e_1, \ldots, e_n) \in \N_0^n
  \text{ with $e_1 + \ldots + e_n = m-1$} \}, \\
  \mathbf{F} & = \{ \ff \mid \ff = (f_1,\ldots,f_n) \in \N_0^n \text{
    with $f_1 + \ldots + f_n = m$} \}.
\end{align*}
We consider the group $\Delta_{m,n}$ on $\lvert \mathbf{E} \rvert +
\lvert \mathbf{F} \rvert + n$ generators
\begin{equation*}
  \{ a_\ee \mid \ee \in \mathbf{E} \} \cup \{ b_\ff \mid  \ff \in
  \mathbf{F} \} \cup \{
  c_j \mid  j \in \{1, \ldots, n\} \},
\end{equation*}
subject to the defining relations
\[
[a_\ee,a_{\ee'}] = [b_\ff,b_{\ff'}]= [a_\ee,c_j] = [b_\ff,c_j] =
[c_j,c_{j'}] = 1
\]
for all $\ee, \ee' \in \mathbf{E}$, $\, \ff, \ff' \in \mathbf{F}$, $\, j, j'
\in \{1,\ldots,n\}$ and
\[
[a_\ee,b_\ff] =
\begin{cases} 
  c_i & \text{if $\ff - \ee$ is of the form
    $(\underbrace{0,\ldots,0}_{i-1 \text{
        entries}},1,\underbrace{0,\ldots,0}_{n-i \text{ entries}})$
    for some $i$,} \\
  1 & \text{if $\ff - \ee$ is not of this form,}
\end{cases}
\]
for $\ee \in \mathbf{E}$ and $\ff \in \mathbf{F}$.  Consequently,
$\Delta_{m,n}$ is a finitely generated, torsion-free class-$2$
nilpotent group of Hirsch length
\[
\lvert \mathbf{E} \rvert + \lvert \mathbf{F} \rvert + n =
\binom{m+n-2}{n-1} + \binom{m+n-1}{n-1} + n
\]
whose centre coincides with the commutator subgroup and has Hirsch
length~$n$; indeed,
\[
\mathrm{Z}(\Delta_{m,n}) = [\Delta_{m,n},\Delta_{m,n}] = \langle c_1,
\ldots, c_n \rangle.
\]
We can now state our main result.  For the subsequent analysis, it is
convenient to think of $m$ as a primary parameter, each $m$ yielding
a sequence of groups $\Delta_{m,n}$ for $n \in \mathbb{N}_{\ge 2}$; we
choose to write binomial coefficients in accordance with this.

\begin{theorem}\label{theorem:main}
  Let $\Delta_{m,n}$ be the nilpotent group defined above, for $m \geq
  1$ and $n\geq 2$.  Let $\Phi$ be the set of roots of the algebraic
  group~$\GL_n$, with negative roots $\Phi^-$ determined by some
  choice of simple roots $\alpha_1, \ldots, \alpha_{n-1}$, and let
  $\ell$ denote the standard length function on the Weyl group $W
  \cong \mathrm{Sym}(n)$ acting on $\Phi$.  Then, for all rational
  primes $p$,
  \begin{equation} \label{equ:main-thm}
    \zeta^\wedge_{\Delta_{m,n},p}(s) = \frac{\sum_{w \in W}
      p^{-\ell(w)} \prod_{i=1}^{n-1} X_i^{\,
        \nu_i(w)}}{\prod_{i=1}^{n-1} (1-X_i)} \cdot
    \frac{1}{(1-\widetilde{X}_0)(1-\widetilde{X}_n)},
  \end{equation}
  where
  \[
  \nu_i(w) =
  \begin{cases}
    1 & \text{if $\alpha_i \in w(\Phi^-)$,} \\
    0 & \text{otherwise},
  \end{cases}
  \qquad \text{for $1 \leq i \leq n-1$,}
  \]
  $X_i = p^{A_i - B_i s}$, for $1 \le i \le n-1$, and
  $\widetilde{X}_j = p^{\widetilde{A}_j - \widetilde{B}_j s}$, for $j \in
  \{0,n\}$, with
  \begin{align*}
    A_i & = i(n-i) + \Big( \tbinom{m+n-2}{m-1} + \tbinom{m+n-1}{m}
          \Big) \big( (m-1) n + i \big)
    \\
        & \quad + \sum\nolimits_{j=1}^i \left( 1 +
          \tfrac{(m-1)(i-j+1)}{n-j+1} \right) \tbinom{m+j-2}{m-1} 
          \tbinom{m+n-j-1}{m-1}, \\
    B_i & = - m(m-1) \tbinom{m+n-2}{m} + \Big( 1 +
          \tbinom{m+n-2}{m-1} \Big) \big( (m-1) n + i \big) 
  \end{align*}
  and
  \begin{align*}
    \widetilde{A}_0 & = n \Big( \tbinom{m+n-2}{m-1} + \tbinom{m+n-1}{m}
                      \Big), && \widetilde{A}_n = \widetilde{A}_0 +
                                  \tbinom{2m+n-2}{2m-1},\\
    \widetilde{B}_0 & = \tbinom{m+n-2}{m-1} + n,
                               && \widetilde{B}_n = \widetilde{B}_0 +
                                  \tbinom{m+n-2}{m} = \tbinom{m+n-1}{m} + n.
  \end{align*}
  Furthermore, these parameters are connected by the relations
  \[
  (m-1) \widetilde{A}_0 = A_0, \quad (m-1) \widetilde{B}_0 = B_0,
  \quad m \widetilde{A}_n = A_n, \quad m \widetilde{B}_n = B_n. 
  \]
\end{theorem}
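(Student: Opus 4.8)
The plan is to follow the two-step strategy recalled in Section~\ref{subsec:Lie-corr}: first identify the algebraic automorphism group $\aG$ attached to $\Delta_{m,n}$, then evaluate the $p$-adic integral~\eqref{equ:zeta-alg-group} for it. Write $\mathfrak{L} = \mathfrak{L}_{m,n}$ for the associated $\Z$-Lie ring. Since $\Delta_{m,n}$ has class $2$, $\mathfrak{L}$ carries a grading $\mathfrak{L} = \mathfrak{L}_1 \oplus \mathfrak{L}_2$ with $\mathfrak{L}_2 = [\mathfrak{L},\mathfrak{L}] = \mathrm{Z}(\mathfrak{L})$ of rank $n$ (spanned by the $c_j$) and $\mathfrak{L}_1$ of rank $\lvert\mathbf{E}\rvert + \lvert\mathbf{F}\rvert$ (spanned by the $a_\ee$ and $b_\ff$). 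The defining relations identify, over $\Q$, the commutator with a $\GL(V)$-equivariant contraction $\beta\colon \mathfrak{L}_1 \times \mathfrak{L}_1 \to \mathfrak{L}_2$, where $V = \mathfrak{L}_2 \otimes \Q$: concretely $\mathfrak{L}_1 \otimes \Q \cong (S^{m-1}V)^* \oplus S^m V$ with the $a_\ee$ a weight basis of $(S^{m-1}V)^*$, the $b_\ff$ a weight basis of $S^m V$, and $\beta$ the natural pairing $(S^{m-1}V)^* \otimes S^m V \to V$, vanishing on $\Lambda^2$ of each summand separately. I would set up this identification explicitly and record the appropriate non-degeneracy of $\beta$.

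Next I would compute $\aG = \Aut(\mathfrak{L})$ as an affine $\Z$-group scheme. Every automorphism preserves the grading, so $\aG$ fits into a short exact sequence $1 \to \aN \to \aG \to \aH \to 1$, where $\aN \cong \Hom(\mathfrak{L}_1, \mathfrak{L}_2)$ is the abelian unipotent group of automorphisms congruent to the identity modulo $\mathfrak{L}_2$, and $\aH \le \GL(\mathfrak{L}_1) \times \GL(\mathfrak{L}_2)$ consists of the pairs compatible with $\beta$. Analysing the $\beta$-compatibility conditions, I expect $\aH = (\GL_n \times \aGm) \ltimes \mathsf{U}$: here $\GL_n = \GL(V)$ acts through its tautological representation and symmetric powers thereof, $\aGm$ scales the two summands of $\mathfrak{L}_1 \otimes \Q$ by $\lambda$ and $\lambda^{-1}$ while fixing $\mathfrak{L}_2$, and $\mathsf{U}$ is a unipotent abelian group of $\beta$-preserving ``shear'' automorphisms of $\mathfrak{L}_1$, of rank $\binom{2m+n-2}{2m-1}$. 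Consequently $\aG = \Rad_u(\aG) \rtimes (\GL_n \times \aGm)$ with $\Rad_u(\aG) = \aN \rtimes \mathsf{U}$. The main work in this step is to show that the above list of automorphisms is exhaustive --- pinning down the whole scheme $\aG$, not merely its identity component --- and to verify that $p$-adically $\aG(\Z_p)$ is generated by $\aN(\Z_p)$, $\mathsf{U}(\Z_p)$ and $(\GL_n\times\aGm)(\Z_p)$.

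I would then establish the ``Lifting Condition'', characterising those $h$ in the reductive part $(\GL_n\times\aGm)(\Q_p)$ that can be completed by an element of $\Rad_u(\aG)(\Q_p)$ to a matrix in $\aG^+_p = \aG(\Q_p) \cap \Mat_d(\Z_p)$, and use it to integrate out the unipotent radical. For $g = \gamma h$ with $\gamma \in \Rad_u(\aG)(\Q_p)$, integrality of $g$ amounts to the Lifting Condition on $h$ together with a lattice condition on $\gamma$; the $\gamma$-integral is the measure of that lattice and contributes a power of $\lvert\det(h\vert_{\mathfrak{L}_1})\rvert_p$, which, combined with the modulus character of the semidirect product, yields
\[
  \zeta^\wedge_{\Delta_{m,n},p}(s) = \int_{\mathcal{H}^+} \lvert \det(h\vert_{\mathfrak{L}_1})\rvert_p^{\,a(s)}\, \lvert \det(h\vert_{\mathfrak{L}_2})\rvert_p^{\,b(s)} \, d\mu(h)
\]
for explicit affine-linear $a(s), b(s)$, where $\mathcal{H}^+ \subseteq (\GL_n \times \aGm)(\Q_p)$ is the sub-semigroup cut out by the Lifting Condition. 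Since $\GL_n \times \aGm$ acts on $\mathfrak{L}_1 \otimes \Q$ preserving the decomposition into $(S^{m-1}V)^*$ and $S^m V$, the integrand is a bi-$(\GL_n\times\aGm)(\Z_p)$-invariant function of $g \in \GL_n(\Q_p)$ and $\lambda \in \Q_p^\times$.

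Finally I would evaluate this integral via the Cartan decompositions $\GL_n(\Q_p) = \bigsqcup_{\mu_1 \ge \cdots \ge \mu_n} \GL_n(\Z_p)\, p^\mu\, \GL_n(\Z_p)$ and $\Q_p^\times = \bigsqcup_{k\in\Z} p^k \Z_p^\times$. Using that $S^d$ of $\diag(p^{\mu_1},\dots,p^{\mu_n})$ is diagonal with entries $p^{\langle \ee,\mu\rangle}$, the Lifting Condition becomes finitely many linear inequalities in $(\mu_1,\dots,\mu_n,k)$, governed by the extreme weights, carving out a rational polyhedral cone $\mathcal{C}$; meanwhile $\det(h\vert_{\mathfrak{L}_1})$ and $\det(h\vert_{\mathfrak{L}_2})$ become monomials in $\det(p^\mu)$ and $\lambda$, and, because the determinant of a symmetric power $S^d$ is a power of the determinant, these monomials carry $m$- and $(m-1)$-fold divisibilities --- the origin of the ``untwisted'' variables $X_0 := \widetilde{X}_0^{\,m-1}$ and $X_n := \widetilde{X}_n^{\,m}$ and of the relations $(m-1)\widetilde{A}_0 = A_0$, $(m-1)\widetilde{B}_0 = B_0$, $m\widetilde{A}_n = A_n$, $m\widetilde{B}_n = B_n$. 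Inserting the known volumes of the double cosets $\GL_n(\Z_p)\, p^\mu\, \GL_n(\Z_p)$ (an explicit polynomial in $p^{-1}$), the integral becomes a multivariate rational generating function supported on $\mathcal{C} \cap \Z^{n+1}$; summing over the $n-1$ ``semisimple'' coordinates $\mu_i - \mu_{i+1}$ resolves, via the $W = \mathrm{Sym}(n)$-action on the non-dominant region (equivalently the Weyl-chamber / Brion calculus), into the numerator $\sum_{w\in W} p^{-\ell(w)}\prod_{i=1}^{n-1}X_i^{\nu_i(w)}$ over $\prod_{i=1}^{n-1}(1-X_i)$, while the two remaining ``central'' coordinates contribute $\frac{1}{(1-\widetilde{X}_0)(1-\widetilde{X}_n)}$. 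I expect the main obstacle to be precisely this last step: matching exponents to produce the stated formulae for $A_i, B_i, \widetilde{A}_j, \widetilde{B}_j$ requires carefully combining (i) the modulus/Jacobian shift from integrating out $\Rad_u(\aG)$, (ii) the determinants of the modules $S^{m-1}V$ and $S^m V$, and (iii) the Cartan double-coset volumes, and then carrying out the combinatorial reduction of the cone sum to the displayed Weyl-group form; the stated relations among the parameters serve as a built-in consistency check, while everything upstream of this is structural.
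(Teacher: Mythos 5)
Your overall architecture is correct and parallels the paper: pass to the graded Lie ring, compute the algebraic automorphism group (your decomposition $\aG = \Rad_u(\aG) \rtimes (\GL_n \times \aGm)$ with $\Rad_u(\aG)$ combining the $\Hom(\mathfrak{L}_1,\mathfrak{L}_2)$ translations and the $C$-block ``shears'' $\mathsf{U}$ of rank $\binom{2m+n-2}{2m-1}$ is the same group as the paper's $\aN \rtimes \aH$ in Theorem~\ref{thm:alg-autom-grp}), then integrate out the unipotent radical and evaluate an integral over the reductive part.

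The gap is in the step where you integrate out $\gamma \in \Rad_u(\aG)(\Q_p)$. You assert that the resulting weight function is a power of $\lvert\det(h\vert_{\mathfrak{L}_1})\rvert_p$, hence that the reduced integral has the clean form $\int_{\mathcal{H}^+}\lvert \det(h\vert_{\mathfrak{L}_1})\rvert_p^{\,a(s)}\lvert \det(h\vert_{\mathfrak{L}_2})\rvert_p^{\,b(s)}\,d\mu(h)$. This is exactly the assertion of~\cite[Theorem~2.3]{dSLu96}, and the paper's point is precisely that it \emph{fails} here. The $D_1,D_2$ blocks do contribute a power of the determinant (the paper's $\theta_2$), but the $C$-block constraint $c_{\ee,\ff}=b_{\ee+\ff}$ yields a function $\theta_1(h)$ that is only a \emph{piecewise} character: on a given Weyl chamber the integrality condition on each $b_\gg$, $\gg\in\mathbf G$, is governed by the lexicographically smallest $\ff\le\gg$, and this selection rule changes from chamber to chamber. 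Concretely, in the dominant chamber one obtains $\log_q\theta_1 = \sum_{i=1}^{n-1} C_i(m,n)\,e_i + \binom{2m+n-2}{n-1}\,e_n$ with $C_i(m,n) = \sum_{j=1}^i\bigl(1+\tfrac{(m-1)(i-j+1)}{n-j+1}\bigr)\binom{m+j-2}{m-1}\binom{m+n-j-1}{m-1}$, a cumulative sum in $i$ that cannot be written as a linear combination of $\log_q\lvert\det(h\vert_{\mathfrak{L}_1})\rvert_p$ and $\log_q\lvert\det(h\vert_{\mathfrak{L}_2})\rvert_p$; these are the very terms that produce the final exponents $A_i$ in the theorem. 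If you carry out your Cartan-decomposition integral with only determinant weights, you will obtain a formula that misses these contributions and disagrees with the stated $A_i$.

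A secondary, non-fatal point: the paper passes to the reductive integral via du Sautoy--Lubotzky's Theorem~2.2 and the Iwahori--Matsumoto $\fp$-adic Bruhat decomposition, which directly produces the numerator $\sum_{w\in W} p^{-\ell(w)}\prod_i X_i^{\nu_i(w)}$ because the cone $w\Xi_w^+$ and the piecewise behaviour of $\theta$ are both indexed by $w\in W$. Your Cartan-decomposition route keeps you in the dominant chamber (so the piecewise issue is hidden but not avoided), and you would then need the double-coset volume polynomials plus a separate combinatorial identity to arrive at the Weyl-group numerator form; this is possible but is not a shortcut, and the hard work of computing the chamber-dependent $\theta_1$ is unavoidable either way.
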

    
We remark that the group $\Delta_{m,2}$ is isomorphic to the
$D^*$-group $\Gamma_m$ discussed earlier and indeed,
Theorem~\ref{theorem:dstar-zeta} is obtained as a special case of
Theorem~\ref{theorem:main} by setting $n=2$. The following can be
deduced by a method of Igusa \cite{Ig89} using symmetries of the Weyl
group $W$; further details are provided in
Section~\ref{sec:corollaries}.

\begin{corollary}\label{corollary:fn-eq}
  Let $\Delta_{m,n}$ be the nilpotent group defined above, for
  $m \geq 1$ and $n\geq 2$.  Then for all primes $p$, the local zeta
  function $\zeta^\wedge_{\Delta_{m,n},p}(s)$ satisfies a functional
  equation of the form
  \begin{equation*}
    \zeta^\wedge_{\Delta_{m,n},p}(s)|_{p\to p^{-1}} =
    (-1)^{n-1}p^{a+bs} \zeta^\wedge_{\Delta_{m,n},p}(s)
  \end{equation*}
  where
  \begin{equation} \label{equ:sym-fac-a-b}
    \begin{split}
      a & = \tbinom{n}{2}+2n \Big( \tbinom{m+n-2}{m-1} +
      \tbinom{m+n-1}{m} \Big) + \tbinom{2m+n-2}{2m-1} ,\\
      b & = (2m-1) \tbinom{m+n-2}{m}-2n \Big( 1 +
      \tbinom{m+n-2}{m-1} \Big).
    \end{split}
  \end{equation}
\end{corollary}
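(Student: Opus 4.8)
The plan is to deduce the functional equation directly from the closed formula~\eqref{equ:main-thm}, reducing it to a single symmetry of the Weyl-group sum in its numerator together with the elementary behaviour of geometric series under $p\mapsto p^{-1}$. Write $N(p) = \sum_{w\in W} p^{-\ell(w)}\prod_{i=1}^{n-1} X_i^{\nu_i(w)}$ and $D(p) = \prod_{i=1}^{n-1}(1-X_i)\cdot(1-\widetilde{X}_0)(1-\widetilde{X}_n)$, so that $\zeta^\wedge_{\Delta_{m,n},p}(s) = N(p)/D(p)$. The substitution $p\mapsto p^{-1}$ replaces each monomial $X_i = p^{A_i - B_i s}$ by $X_i^{-1}$, each $\widetilde{X}_j$ by $\widetilde{X}_j^{-1}$, and each $p^{-\ell(w)}$ by $p^{\ell(w)}$; write $N(p^{-1}) = \sum_{w\in W} p^{\ell(w)}\prod_{i=1}^{n-1} X_i^{-\nu_i(w)}$ for the resulting numerator.

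The denominator is immediate: from $1 - Y^{-1} = -Y^{-1}(1-Y)$ one gets $D(p)\vert_{p\to p^{-1}} = (-1)^{n-1}\big(\prod_{i=1}^{n-1} X_i\big)^{-1}(\widetilde{X}_0\widetilde{X}_n)^{-1}\, D(p)$, the two central factors contributing a sign $(-1)^2 = 1$. For the numerator I would invoke the idea of Igusa~\cite{Ig89}: let $w_0\in W$ be the longest element, with $\ell(w_0) = \lvert\Phi^+\rvert = \binom{n}{2}$. Right multiplication $w\mapsto w w_0$ is a bijection of $W$ with $\ell(w w_0) = \binom{n}{2} - \ell(w)$; and since $w_0$ interchanges $\Phi^-$ and $\Phi^+$, we have $\alpha_i\in(w w_0)(\Phi^-)$ if and only if $w^{-1}(\alpha_i)\in\Phi^+$, that is $\nu_i(w w_0) = 1 - \nu_i(w)$ for $1\le i\le n-1$ --- crucially with the \emph{same} index $i$, so no relabelling of the $X_i$ is needed. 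Re-indexing the sum defining $N(p)$ by $w\mapsto w w_0$ therefore gives $N(p) = p^{-\binom{n}{2}}\big(\prod_{i=1}^{n-1} X_i\big)\, N(p^{-1})$, i.e.\ $N(p^{-1}) = p^{\binom{n}{2}}\big(\prod_{i=1}^{n-1} X_i\big)^{-1} N(p)$.

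Combining the two computations, the factor $\prod_{i=1}^{n-1} X_i$ cancels and we obtain $\zeta^\wedge_{\Delta_{m,n},p}(s)\vert_{p\to p^{-1}} = (-1)^{n-1} p^{\binom{n}{2}}\,\widetilde{X}_0\widetilde{X}_n\,\zeta^\wedge_{\Delta_{m,n},p}(s)$, so that $a = \binom{n}{2} + \widetilde{A}_0 + \widetilde{A}_n$ and $b = -(\widetilde{B}_0 + \widetilde{B}_n)$. It remains to reconcile these with~\eqref{equ:sym-fac-a-b}. Substituting $\widetilde{A}_n = \widetilde{A}_0 + \binom{2m+n-2}{2m-1}$ and $\widetilde{A}_0 = n\big(\binom{m+n-2}{m-1} + \binom{m+n-1}{m}\big)$ yields the stated value of $a$ at once. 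For $b$, substituting $\widetilde{B}_n = \widetilde{B}_0 + \binom{m+n-2}{m}$ and $\widetilde{B}_0 = \binom{m+n-2}{m-1} + n$ gives $b = -2\binom{m+n-2}{m-1} - \binom{m+n-2}{m} - 2n$, and the binomial identity $m\binom{m+n-2}{m} = (n-1)\binom{m+n-2}{m-1}$ --- a restatement of the relations recorded at the end of Theorem~\ref{theorem:main} --- rewrites this as $(2m-1)\binom{m+n-2}{m} - 2n\big(1 + \binom{m+n-2}{m-1}\big)$, as claimed.

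I expect the only point requiring care to be the Weyl-group identity $\nu_i(w w_0) = 1 - \nu_i(w)$: one must use \emph{right} multiplication by $w_0$, since left multiplication by $w_0$ (or conjugation) instead produces $\nu_i(w_0 w) = 1 - \nu_{n-i}(w)$, which --- via the diagram automorphism $\alpha_i\mapsto\alpha_{n-i}$ of type $A_{n-1}$ --- would demand a relabelling $X_i\leftrightarrow X_{n-i}$ of the numerator monomials; but the parameters $A_i, B_i$ are affine in $i$ and not symmetric under $i\mapsto n-i$, so that route does not close up. Everything else is formal manipulation of geometric series together with the single binomial identity above.
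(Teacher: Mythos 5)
Your proposal is correct and follows essentially the same route as the paper: both reduce the functional equation to the Igusa symmetry of the Weyl-group sum, re-indexing by the longest element $w_0$ via $\ell(w)+\ell(ww_0)=\lvert\Phi^+\rvert$ and $\nu_i(ww_0)=1-\nu_i(w)$, and then absorb the factors $\prod X_i$, $\widetilde{X}_0\widetilde{X}_n$ arising from the denominator. The paper phrases the computation for $\mathcal{Z}_{\aH,\fp,\theta}(s)$ in terms of the residue-field size $q$ and then specialises $\fk=\Q$, whereas you work directly with the local factors and the variable $p$, but this is only cosmetic. Your closing remark about right versus left multiplication by $w_0$ (and the diagram automorphism $\alpha_i\mapsto\alpha_{n-i}$ that left multiplication would force) is a genuine and worthwhile clarification that the paper does not spell out; it is also accurate, and your reconciliation of $b$ with the stated formula via the identity $m\tbinom{m+n-2}{m}=(n-1)\tbinom{m+n-2}{m-1}$, equivalent to $m\widetilde{B}_n=B_n$, is correct.
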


This reduces to the functional equation for
$\zeta_{\Gamma_m,p}^\wedge(s)$ by setting $n=2$. A slight modification
of the main result in \cite{Be11} can be used to deduce functional
equations already half way through the proof of our theorem; see
Remark~\ref{rem:funct-equa-half-way}.  A thorough analysis of the
explicit formulae in Theorem~\ref{theorem:main} yields the following
information about convergence and meromorphic continuation; see
Section~\ref{sec:corollaries}.

\begin{corollary}\label{corollary:mero-cont}
  Let $\Delta_{m,n}$ be the nilpotent group defined above, for
  $m \geq 1$ and $n\geq 2$.  Then the pro-isomorphic zeta function
  $\zeta^\wedge_{\Delta_{m,n}}(s)$ has abscissa of convergence
  \[
  \alpha(m,n) = 
  \begin{cases}
   (A_1 + 1)/ B_1  = n+1 & \text{if $m = 1$,} \\
   (\widetilde{A}_0 +1) / \widetilde{B}_0 & \text{if $(m,n) \in \mathcal{C}$,} \\
   (\widetilde{A}_n +1) / \widetilde{B}_n & \text{otherwise,} \\ 
  \end{cases}
  \]
  where $\widetilde{A}_j, \widetilde{B}_j$ are as in
  Theorem~\textup{\ref{theorem:main}} and
  \[
  \mathcal{C} = (\{2,3\} \times \mathbb{N}_{\ge 3}) \cup (\{4\} \times
  \{4,5,\ldots,38\}) \cup (\{5\} \times \{5,6,7,8,9\}).
  \]

  Moreover, $\zeta^\wedge_{\Delta_{m,n}}(s)$ admits meromorphic
  continuation (at least) to the half-plane consisting of all
  $s \in \C$ with $\mathrm{Re}(s) > \beta(m,n)$,
  where
  \[
  \beta(m,n) = \max\{ A_i /B_i \mid 1 \le i \le n-1\} <
  \alpha(m,n).
  \]
  The resulting pole of $\zeta^\wedge_{\Delta_{m,n}}(s)$ at
  $s=\alpha(m,n)$ is simple.
\end{corollary}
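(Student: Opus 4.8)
The plan is to read off everything from the explicit formula in Theorem~\ref{theorem:main} and reduce the analytic claims to standard facts about translates of the Riemann zeta function. Write $X_i = p^{A_i - B_i s}$ and $\widetilde{X}_j = p^{\widetilde{A}_j - \widetilde{B}_j s}$ as in Theorem~\ref{theorem:main}, and let $N_p(s) = \sum_{w \in W} p^{-\ell(w)} \prod_{i=1}^{n-1} X_i^{\nu_i(w)}$ be the Weyl-sum numerator, so that $\zeta^\wedge_{\Delta_{m,n},p}(s) = N_p(s) / \big( \prod_{i=1}^{n-1}(1-X_i) \, (1-\widetilde{X}_0)(1-\widetilde{X}_n) \big)$. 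Since $\prod_p (1 - p^{A - Bs})^{-1} = \zeta(Bs - A)$ whenever $B > 0$, forming the Euler product yields
\[
  \zeta^\wedge_{\Delta_{m,n}}(s) = \Big( \prod_{i=1}^{n-1} \zeta(B_i s - A_i) \Big) \, \zeta(\widetilde{B}_0 s - \widetilde{A}_0) \, \zeta(\widetilde{B}_n s - \widetilde{A}_n) \cdot G(s), \qquad G(s) \coloneqq \prod_p N_p(s),
\]
first on the half-plane of absolute convergence, then wherever $G$ admits continuation. Positivity of $B_1, \dots, B_{n-1}$ and of $\widetilde{B}_0, \widetilde{B}_n$ is a short check from the formulae (immediate for $m=1$; for $m \ge 2$ it follows on writing $\binom{m+n-2}{m} = \tfrac{n-1}{m} \binom{m+n-2}{m-1}$), so each $\zeta(B_\bullet s - A_\bullet)$ is a genuine translate of $\zeta$: meromorphic on $\C$, with a single simple pole at $s = (A_\bullet + 1)/B_\bullet$ and non-vanishing for $\mathrm{Re}(s) > (A_\bullet + 1)/B_\bullet$.

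The first key step is that the correction factor $G$ is harmless on $\mathrm{Re}(s) > \beta(m,n)$. Each $N_p(s)$ is a polynomial in $p^{-s}$ with non-negative coefficients and constant term $1$; writing $N_p(s) = 1 + \sum_{w \ne \mathrm{id}} p^{\, a(w) - b(w) s}$ with $a(w) = -\ell(w) + \sum_{i \in S(w)} A_i$, $b(w) = \sum_{i \in S(w)} B_i$ and $S(w) = \{ i : \alpha_i \in w(\Phi^-) \} \subseteq \{1, \dots, n-1\}$, one uses $1 \le |S(w)| \le \ell(w)$ for $w \ne \mathrm{id}$ together with the mediant inequality (valid since all $B_i > 0$) to obtain $b(w) \ge 1$ and $(a(w)+1)/b(w) \le \beta(m,n) = \max_{1 \le i \le n-1} A_i/B_i$. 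Hence $\prod_p N_p(s)$ converges absolutely and locally uniformly on $\mathrm{Re}(s) > \beta(m,n)$, so $G$ is holomorphic there, and $G(\sigma) \ge 1$ for real $\sigma > \beta(m,n)$. With the factorisation this already gives the claimed meromorphic continuation of $\zeta^\wedge_{\Delta_{m,n}}$ to $\mathrm{Re}(s) > \beta(m,n)$, all of whose poles there occur at numbers of the form $(A_i+1)/B_i$ or $(\widetilde{A}_j+1)/\widetilde{B}_j$ exceeding $\beta(m,n)$; and since $\alpha(m,n) \ge (A_{i^*}+1)/B_{i^*} > A_{i^*}/B_{i^*} = \beta(m,n)$ for an index $i^*$ realising $\beta(m,n)$, we get $\beta(m,n) < \alpha(m,n)$.

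It remains to identify $\alpha(m,n) = \max\big( \{ (A_i+1)/B_i : 1 \le i \le n-1 \} \cup \{ (\widetilde{A}_0+1)/\widetilde{B}_0, \, (\widetilde{A}_n+1)/\widetilde{B}_n \} \big)$ and to see that the maximum is attained uniquely. For $m = 1$ a direct evaluation of the binomials gives $(A_i+1)/B_i = n + 1 - \tfrac{i}{2} + \tfrac{1}{2i}$, strictly decreasing in $i \ge 1$ with value $n+1$ at $i=1$, while $(\widetilde{A}_0+1)/\widetilde{B}_0 = n + \tfrac{1}{n+1}$ and $(\widetilde{A}_n+1)/\widetilde{B}_n = (n+1)^2/(2n)$ are both below $n+1$ for $n \ge 2$; so $\alpha(1,n) = (A_1+1)/B_1 = n+1$. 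For $m \ge 2$ one first uses the relations $A_0 = (m-1)\widetilde{A}_0$, $B_0 = (m-1)\widetilde{B}_0$, $A_n = m\widetilde{A}_n$, $B_n = m\widetilde{B}_n$ to deduce $(A_0+1)/B_0 < (\widetilde{A}_0+1)/\widetilde{B}_0$ and $(A_n+1)/B_n < (\widetilde{A}_n+1)/\widetilde{B}_n$, and then studies $i \mapsto (A_i+1)/B_i$ on $\{1, \dots, n-1\}$ — with $B_i$ affine and $A_i$ a quadratic-plus-partial-sum expression in $i$ — to show it never exceeds $\max\{ (\widetilde{A}_0+1)/\widetilde{B}_0, \, (\widetilde{A}_n+1)/\widetilde{B}_n \}$; this concavity/endpoint analysis is the delicate core. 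Finally, since $\widetilde{A}_n = \widetilde{A}_0 + \binom{2m+n-2}{2m-1}$ and $\widetilde{B}_n = \widetilde{B}_0 + \binom{m+n-2}{m}$, the mediant rule gives $(\widetilde{A}_n+1)/\widetilde{B}_n \le (\widetilde{A}_0+1)/\widetilde{B}_0$ exactly when $\binom{2m+n-2}{2m-1} / \binom{m+n-2}{m} \le (\widetilde{A}_0+1)/\widetilde{B}_0$, which is precisely the condition $(m,n) \in \mathcal{C}$. Here the left-hand side is a polynomial in $n$ of degree $m-1$ and the right-hand side one of degree $2$; hence for $m \in \{2,3\}$ the inequality holds for all large $n$ (comparing degrees, resp.\ leading coefficients when $m=3$) — indeed for all $n \ge 3$ — whereas for $m \ge 4$ it fails for all large $n$, leaving only finitely many pairs with $m \ge 4$ to be sorted out by a bounded direct computation (those with $m \in \{4,5\}$ in the stated ranges, none with $m \ge 6$); the same computation confirms the two ratios are never equal, so the maximum defining $\alpha(m,n)$ is attained uniquely.

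Putting the pieces together: exactly one translated zeta factor is singular at $s = \alpha(m,n)$, with a simple pole there, while the remaining ones are holomorphic and non-zero at $\alpha(m,n)$ and $G(\alpha(m,n)) \ge 1 \ne 0$ since $\alpha(m,n) > \beta(m,n)$; hence $\zeta^\wedge_{\Delta_{m,n}}$ has a simple pole at $s = \alpha(m,n)$ and is holomorphic on $\mathrm{Re}(s) > \alpha(m,n)$, so by Landau's theorem (the Dirichlet coefficients being non-negative) $\alpha(m,n)$ is its abscissa of convergence. The main obstacle is the comparison for $m \ge 2$: excluding the interior ratios $(A_i+1)/B_i$, and above all controlling the delicate crossover of $\binom{2m+n-2}{2m-1}/\binom{m+n-2}{m}$ against $(\widetilde{A}_0+1)/\widetilde{B}_0$ for $m \in \{4,5\}$, which is exactly what produces the exotic finite set $\mathcal{C}$.
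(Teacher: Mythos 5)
Your approach is essentially the same as the paper's: expand the Euler product so that the denominator factors give translates of the Riemann zeta function, observe that the Weyl-sum numerators contribute a factor $G(s) = \prod_p N_p(s)$ converging absolutely on $\mathrm{Re}(s) > \beta(m,n)$ via the mediant inequality (this is precisely inequality \eqref{equ:A-B-max} in the paper), and then identify $\alpha(m,n)$ by comparing the candidate abscissae $(A_i+1)/B_i$, $(\widetilde{A}_0+1)/\widetilde{B}_0$, $(\widetilde{A}_n+1)/\widetilde{B}_n$. The paper organizes this comparison into Lemma~\ref{lem:maximum-A-B} (showing the sequence $(A_i+1)/B_i$ is \emph{convex} -- you wrote ``concavity'', a slip, since the maximum is taken at the endpoints $i=0,n$) and Lemma~\ref{lem:appendix-inequ} (the $\widetilde{A}_0$ vs.\ $\widetilde{A}_n$ crossover). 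Two points where your sketch leaves a genuine gap that the paper must actually close: (i) you assert $(A_0+1)/B_0 < (\widetilde{A}_0+1)/\widetilde{B}_0$, but via the relation $A_0 = (m-1)\widetilde{A}_0$, $B_0 = (m-1)\widetilde{B}_0$ this is an \emph{equality} when $m=2$; the strictness of $\beta<\alpha$ has to come from the strictly increasing increments $x_i/y_i$ in the convexity argument, not from this step. (ii) Your claim that the crossover analysis reduces to ``a bounded direct computation \ldots\ none with $m\ge 6$'' is not self-justifying: the degree comparison in $n$ tells you that for each fixed $m\ge 4$ the exceptional set of $n$ is finite, but it does not bound that set uniformly in $m$, so a priori you would have to check infinitely many $m$. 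The paper's Lemma~\ref{lem:appendix-inequ} supplies the missing uniform argument by writing $F(m,n)$ in terms of a polynomial $f_m(t)$ and proving that $f_m$ has non-negative coefficients for all $m\ge 7$ (via the decomposition $f_m = g_m + (\prod_{i=0}^{m-2}(t+i))h_m$), reducing the remaining cases to the explicit Table~\ref{tab:1}. That uniform non-negativity is the content your proposal defers, and it is the genuinely delicate part of the proof.
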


\begin{remark} \label{rem:Grenham} We remark that the group
  $\Delta_{1,n}$ of Hirsch length $2n+1$ is none other than the
  Grenham group of the same Hirsch length, whose local pro-isomorphic
  zeta functions have previously been determined by the first author.
  Setting $m=1$ in Theorem~\ref{theorem:main} or setting $d=n+1$ in
  the formula appearing in~\cite[Section~3.3.13.2]{Be05} we obtain
  different but equivalent expressions for the local pro-isomorphic
  zeta functions of $\Delta_{1,n}$, namely
  \begin{equation} \label{equ:explicit-Grenham-1}
    \zeta_{\Delta_{1,n},p}^\wedge(s) = \frac{\sum_{w \in W} p^{-\ell(w)}
      \prod_{i=1}^{n-1} (p^{i(2n+2-i) -2is})^{
        \nu_i(w)}}{(1-p^{n(n+1)-(n+1)s}) \prod_{i=1}^n ( 1- p^{i(2n+2-i)
        -2is} )},
  \end{equation}
  where $\ell$ denotes the standard length function on
  $W \cong \mathrm{Sym}(n)$, and
  \begin{equation} \label{equ:explicit-Grenham-2}
    \zeta_{\Delta_{1,n},p}^\wedge(s) = \frac{1}{(1-p^{n(n+1)-(n+1)s})
      \prod_{i=1}^n (1-p^{n+1+i - 2s})}.
  \end{equation}

  The formula~\eqref{equ:explicit-Grenham-2} can be derived by a direct
  computation that does not require the general approach via Bruhat
  decompositions; compare ~\cite[Section~3.3.2]{Be05}.  The connection
  between \eqref{equ:explicit-Grenham-1} and
  \eqref{equ:explicit-Grenham-2} is given by the identity
  \[
  \frac{\sum_{w \in W} p^{-\ell(w)} \prod_{i=1}^{n-1} (p^{i(n-i) -
      it})^{\nu_i(w)}}{\prod_{i=1}^n ( 1-p^{i(n-i) -it} )} =
  \int_{\GL_n(\Q_p)^+} \lvert \det B \rvert_p^{\, t} \, d\mu =
  \frac{1}{\prod_{i=1}^n (1 - p^{i-1-t})}
  \]
  applied to $t = 2s-n-2$.  Observe that by virtue
  of~\eqref{equ:explicit-Grenham-2}, the zeta function
  $\zeta_{\Delta_{1,n},p}^\wedge(s)$ admits meromorphic continuation
  to the entire complex plane.
\end{remark}

It would be interesting to study further analytic properties of the
global zeta functions associated to the nilpotent
groups~$\Delta_{m,n}$.  Moreover it would be illuminating to study in
more depth the distribution of the abscissae of convergence
$\alpha(m,n)$ of the groups~$\Delta_{m,n}$; compare
Figure~\ref{fig:abscissae}.  A straightforward analysis yields the
following corollary, which shows that the spectrum of abscissae of
convergence for pro-isomorphic zeta functions of class-$2$ nilpotent groups
contains infinitely many cluster points; cf.\
\cite[Problem~1.3]{dSGr06}.

\begin{corollary} \label{cor:limit-points}
  For each $n\geq 2$, the abscissae $\alpha(m,n)$ of
  $\Delta_{m,n}$ converge, as $m \to \infty$, namely to 
  \[
  \lim_{m\to \infty} \alpha(m,n)=2n+2^{n-1}.
  \]
\end{corollary}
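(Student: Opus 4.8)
The plan is to compute $\lim_{m\to\infty}\alpha(m,n)$ directly from the formula for $\alpha(m,n)$ in Corollary~\ref{corollary:mero-cont}. For fixed $n\ge 2$ and $m$ large, $(m,n)$ eventually lies in the ``otherwise'' branch (since $\mathcal{C}$ contains only finitely many pairs with second coordinate $\ge 3$ for each fixed $m$, and in any case is a bounded set in the $m$-direction), so for all but finitely many $m$ we have $\alpha(m,n)=(\widetilde{A}_n+1)/\widetilde{B}_n$. Thus the task reduces to evaluating
\[
\lim_{m\to\infty} \frac{\widetilde{A}_n+1}{\widetilde{B}_n}
= \lim_{m\to\infty} \frac{n\big(\binom{m+n-2}{m-1}+\binom{m+n-1}{m}\big) + \binom{2m+n-2}{2m-1} + 1}{\binom{m+n-1}{m}+n}.
\]

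The key step is an asymptotic comparison of the three binomial coefficients as $m\to\infty$ with $n$ fixed. Here $\binom{m+n-1}{m}=\binom{m+n-1}{n-1}$ and $\binom{m+n-2}{m-1}=\binom{m+n-2}{n-1}$ are both polynomials in $m$ of degree $n-1$ with leading coefficient $1/(n-1)!$, so $\binom{m+n-1}{m}\sim m^{n-1}/(n-1)!$ and likewise $\binom{m+n-2}{m-1}\sim m^{n-1}/(n-1)!$; hence the denominator $\widetilde{B}_n\sim m^{n-1}/(n-1)!$ and the first summand of the numerator, $n(\binom{m+n-2}{m-1}+\binom{m+n-1}{m})$, is $\sim 2n\,m^{n-1}/(n-1)!$. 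The term $\binom{2m+n-2}{2m-1}=\binom{2m+n-2}{n-1}$ is again a polynomial in $m$ of degree $n-1$, but now with leading coefficient $2^{n-1}/(n-1)!$, so $\binom{2m+n-2}{2m-1}\sim 2^{n-1}m^{n-1}/(n-1)!$. Dividing numerator and denominator by $m^{n-1}/(n-1)!$ and letting $m\to\infty$, the constant $+1$ and the additive $n$ in the denominator vanish in the limit, giving
\[
\lim_{m\to\infty}\alpha(m,n) = \frac{2n + 2^{n-1}}{1} = 2n + 2^{n-1},
\]
as claimed.

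I do not expect a genuine obstacle here: everything rests on the elementary fact that $\binom{cm+O(1)}{n-1}$ is a degree-$(n-1)$ polynomial in $m$ with leading coefficient $c^{n-1}/(n-1)!$, together with the (already established) finiteness of the exceptional set $\mathcal{C}$ in the $m$-direction, which guarantees that the ``otherwise'' branch of the formula for $\alpha(m,n)$ applies for all sufficiently large $m$. The only point requiring a word of care is to note that we do not need the precise cut-off: since we are taking $m\to\infty$ and $\mathcal{C}$ does not contain $(m,n)$ for $m$ large (for each fixed $n$), only the ``otherwise'' case is relevant, and the $m=1$ case is obviously irrelevant. One could also remark, for context, that $2n+2^{n-1}$ grows without bound as $n\to\infty$, so the set $\{\,2n+2^{n-1} : n\ge 2\,\}$ of cluster points of the abscissae-spectrum is itself infinite, which is the point of reference to \cite[Problem~1.3]{dSGr06}.
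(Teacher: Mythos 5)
Your proposal is correct and follows essentially the same route as the paper: reduce to the ``otherwise'' branch $\alpha(m,n)=(\widetilde{A}_n+1)/\widetilde{B}_n$ for large $m$ (justified by the boundedness of $\mathcal{C}$ in the $m$-direction), then extract the limit via the degree-$(n-1)$ polynomial asymptotics of the three binomial coefficients; the paper normalises by $\binom{m+n-1}{n-1}$ where you normalise by $m^{n-1}/(n-1)!$, which is the same thing. One small slip: your first stated reason for eventually landing in the ``otherwise'' branch (``$\mathcal{C}$ contains only finitely many pairs with second coordinate $\ge 3$ for each fixed $m$'') is false as written, since $\{2,3\}\times\mathbb{N}_{\ge 3}\subseteq\mathcal{C}$, but the parenthetical reason you give --- $\mathcal{C}$ is bounded in the $m$-direction, so $(m,n)\notin\mathcal{C}$ once $m\ge 6$ --- is correct and is all that is needed.
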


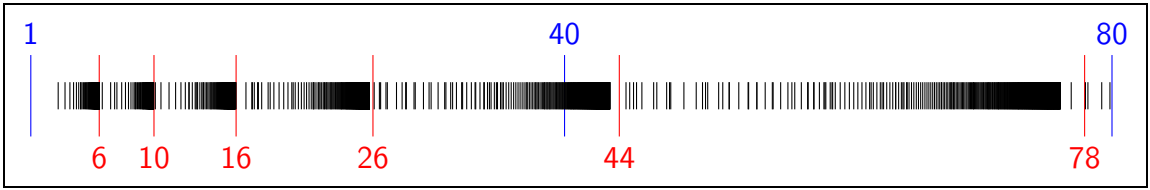
\begin{figure}[htb!]
 \centering
 \caption{Abscissae of convergence $\alpha(m,n)$ within the real
   interval $[0,80]$ arising from parameters $2 \le m \le 500$ and $2
   \le n \le 20$}
 \label{fig:abscissae}
 \[
 \boxed{\begin{tikzpicture}[font=\sffamily,scale=.18] \foreach \i in
     {1,40,80} \draw[blue](\i,-2) -- (\i,4) node[anchor=south] {\i};
     \foreach \i in {6,10,16,26,44,78} \draw[red](\i,4) -- (\i,-2)
     node[anchor=north] {\i}; \foreach \x in
     {3.000, 3.500, 3.857, 4.125, 4.333, 4.500, 4.636, 4.667, 4.750,
       4.846, 4.929, 5.000, 5.062, 5.118, 5.167, 5.210, 5.250, 5.286,
       5.318, 5.348, 5.375, 5.400, 5.423, 5.444, 5.464, 5.483, 5.500,
       5.516, 5.531, 5.545, 5.559, 5.571, 5.583, 5.595, 5.605, 5.615,
       5.625, 5.634, 5.643, 5.651, 5.659, 5.667, 5.674, 5.681, 5.688,
       5.694, 5.700, 5.706, 5.711, 5.717, 5.722, 5.727, 5.732, 5.737,
       5.741, 5.746, 5.750, 5.754, 5.758, 5.762, 5.766, 5.769, 5.773,
       5.776, 5.779, 5.783, 5.786, 5.789, 5.792, 5.794, 5.797, 5.800,
       5.803, 5.805, 5.808, 5.810, 5.812, 5.815, 5.817, 5.819, 5.821,
       5.824, 5.826, 5.828, 5.830, 5.832, 5.833, 5.835, 5.837, 5.839,
       5.840, 5.842, 5.844, 5.845, 5.847, 5.849, 5.850, 5.852, 5.853,
       5.854, 5.856, 5.857, 5.858, 5.860, 5.861, 5.862, 5.864, 5.865,
       5.866, 5.867, 5.869, 5.870, 5.871, 5.872, 5.873, 5.874, 5.875,
       5.876, 5.877, 5.878, 5.879, 5.880, 5.881, 5.882, 5.883, 5.884,
       5.885, 5.885, 5.886, 5.887, 5.888, 5.889, 5.890, 5.891, 5.891,
       5.892, 5.893, 5.894, 5.895, 5.895, 5.896, 5.896, 5.897, 5.898,
       5.898, 5.899, 5.900, 5.901, 5.901, 5.902, 5.903, 5.903, 5.904,
       5.904, 5.905, 5.906, 5.906, 5.907, 5.907, 5.908, 5.909, 5.909,
       5.910, 5.910, 5.911, 5.911, 5.912, 5.912, 5.913, 5.913, 5.914,
       5.914, 5.915, 5.915, 5.916, 5.916, 5.917, 5.917, 5.917, 5.918,
       5.918, 5.919, 5.919, 5.920, 5.920, 5.921, 5.921, 5.922, 5.922,
       5.923, 5.923, 5.924, 5.924, 5.925, 5.925, 5.926, 5.926, 5.927,
       5.927, 5.928, 5.928, 5.929, 5.929, 5.930, 5.930, 5.931, 5.931,
       5.932, 5.932, 5.933, 5.933, 5.934, 5.934, 5.935, 5.935, 5.936,
       5.936, 5.937, 5.937, 5.938, 5.938, 5.938, 5.939, 5.939, 5.940,
       5.940, 5.941, 5.941, 5.942, 5.942, 5.943, 5.943, 5.944, 5.944,
       5.945, 5.945, 5.946, 5.946, 5.947, 5.947, 5.948, 5.948, 5.949,
       5.949, 5.950, 5.950, 5.951, 5.951, 5.952, 5.952, 5.953, 5.953,
       5.954, 5.954, 5.955, 5.955, 5.956, 5.956, 5.957, 5.957, 5.958,
       5.958, 5.958, 5.959, 5.959, 5.960, 5.960, 5.961, 5.961, 5.962,
       5.962, 5.963, 5.963, 5.964, 5.964, 5.965, 5.965, 5.966, 5.966,
       5.967, 5.967, 5.968, 5.968, 5.969, 5.969, 5.970, 5.970, 6.222,
       6.833, 7.125, 7.291, 7.641, 7.917, 8.138, 8.318, 8.469, 8.596,
       8.643, 8.704, 8.797, 8.878, 8.948, 9.012, 9.067, 9.117, 9.162,
       9.203, 9.208, 9.240, 9.274, 9.305, 9.333, 9.359, 9.384, 9.406,
       9.428, 9.447, 9.465, 9.482, 9.498, 9.514, 9.527, 9.541, 9.554,
       9.565, 9.577, 9.588, 9.599, 9.608, 9.617, 9.626, 9.635, 9.643,
       9.650, 9.658, 9.665, 9.672, 9.679, 9.685, 9.690, 9.696, 9.702,
       9.708, 9.713, 9.718, 9.723, 9.728, 9.732, 9.736, 9.741, 9.745,
       9.749, 9.750, 9.753, 9.757, 9.760, 9.764, 9.768, 9.771, 9.773,
       9.776, 9.780, 9.783, 9.786, 9.789, 9.791, 9.794, 9.797, 9.799,
       9.802, 9.804, 9.807, 9.809, 9.811, 9.813, 9.815, 9.817, 9.819,
       9.821, 9.823, 9.825, 9.827, 9.829, 9.831, 9.833, 9.834, 9.836,
       9.838, 9.840, 9.841, 9.843, 9.844, 9.846, 9.847, 9.849, 9.850,
       9.852, 9.853, 9.854, 9.855, 9.856, 9.858, 9.859, 9.860, 9.861,
       9.863, 9.864, 9.865, 9.866, 9.867, 9.868, 9.869, 9.870, 9.872,
       9.873, 9.874, 9.875, 9.876, 9.877, 9.878, 9.879, 9.880, 9.881,
       9.882, 9.883, 9.884, 9.885, 9.886, 9.887, 9.888, 9.889, 9.890,
       9.891, 9.892, 9.893, 9.894, 9.895, 9.896, 9.896, 9.897, 9.898,
       9.899, 9.900, 9.901, 9.902, 9.903, 9.904, 9.905, 9.906, 9.907,
       9.908, 9.909, 9.910, 9.911, 9.912, 9.913, 9.914, 9.915, 9.916,
       9.917, 9.918, 9.919, 9.920, 9.921, 9.922, 9.923, 9.924, 9.925,
       9.926, 9.927, 9.928, 9.929, 9.930, 9.931, 9.932, 9.933, 9.934,
       9.935, 9.936, 9.937, 9.938, 9.938, 9.939, 9.940, 9.941, 9.942,
       9.943, 9.944, 9.945, 9.946, 9.947, 9.948, 9.949, 9.950, 9.951,
       9.952, 9.953, 9.954, 9.955, 9.956, 9.957, 9.958, 9.959, 9.960,
       9.961, 9.962, 9.963, 9.964, 9.965, 9.966, 9.967, 9.968, 10.10,
       10.51, 11.10, 11.58, 11.97, 12.29, 12.55, 12.57, 12.80, 13.01,
       13.08, 13.15, 13.19, 13.35, 13.49, 13.58, 13.62, 13.73, 13.83,
       13.93, 14.02, 14.10, 14.17, 14.17, 14.24, 14.30, 14.36, 14.41,
       14.46, 14.51, 14.56, 14.60, 14.64, 14.68, 14.71, 14.75, 14.78,
       14.81, 14.84, 14.87, 14.89, 14.92, 14.94, 14.96, 14.99, 15.01,
       15.03, 15.05, 15.07, 15.08, 15.10, 15.12, 15.13, 15.15, 15.16,
       15.17, 15.18, 15.19, 15.21, 15.22, 15.23, 15.24, 15.25, 15.27,
       15.28, 15.29, 15.30, 15.31, 15.32, 15.33, 15.34, 15.35, 15.35,
       15.36, 15.37, 15.38, 15.39, 15.40, 15.40, 15.41, 15.42, 15.42,
       15.43, 15.44, 15.44, 15.45, 15.46, 15.46, 15.47, 15.47, 15.48,
       15.49, 15.49, 15.50, 15.50, 15.51, 15.51, 15.52, 15.52, 15.53,
       15.53, 15.54, 15.54, 15.54, 15.55, 15.55, 15.56, 15.56, 15.57,
       15.57, 15.57, 15.58, 15.58, 15.58, 15.59, 15.59, 15.59, 15.60,
       15.60, 15.60, 15.61, 15.61, 15.61, 15.62, 15.62, 15.62, 15.63,
       15.63, 15.63, 15.63, 15.64, 15.64, 15.64, 15.65, 15.65, 15.65,
       15.65, 15.66, 15.66, 15.66, 15.66, 15.67, 15.67, 15.67, 15.67,
       15.67, 15.68, 15.68, 15.68, 15.68, 15.69, 15.69, 15.69, 15.69,
       15.69, 15.70, 15.70, 15.70, 15.70, 15.70, 15.70, 15.71, 15.71,
       15.71, 15.71, 15.71, 15.71, 15.72, 15.72, 15.72, 15.72, 15.72,
       15.72, 15.73, 15.73, 15.73, 15.73, 15.73, 15.73, 15.74, 15.74,
       15.74, 15.74, 15.74, 15.74, 15.74, 15.75, 15.75, 15.75, 15.75,
       15.75, 15.75, 15.75, 15.75, 15.76, 15.76, 15.76, 15.76, 15.76,
       15.76, 15.76, 15.76, 15.77, 15.77, 15.77, 15.77, 15.77, 15.77,
       15.77, 15.77, 15.77, 15.78, 15.78, 15.78, 15.78, 15.78, 15.78,
       15.78, 15.78, 15.78, 15.78, 15.79, 15.79, 15.79, 15.79, 15.79,
       15.79, 15.79, 15.79, 15.79, 15.79, 15.79, 15.80, 15.80, 15.80,
       15.80, 15.80, 15.80, 15.80, 15.80, 15.80, 15.80, 15.81, 15.81,
       15.81, 15.81, 15.81, 15.81, 15.81, 15.81, 15.81, 15.81, 15.82,
       15.82, 15.82, 15.82, 15.82, 15.82, 15.82, 15.82, 15.82, 15.82,
       15.83, 15.83, 15.83, 15.83, 15.83, 15.83, 15.83, 15.83, 15.83,
       15.83, 15.83, 15.84, 15.84, 15.84, 15.84, 15.84, 15.84, 15.84,
       15.84, 15.84, 15.84, 15.85, 15.85, 15.85, 15.85, 15.85, 15.85,
       15.85, 15.85, 15.85, 15.85, 15.86, 15.86, 15.86, 15.86, 15.86,
       15.86, 15.86, 15.86, 15.86, 15.86, 15.87, 15.87, 15.87, 15.87,
       15.87, 15.87, 15.87, 15.87, 15.87, 15.87, 15.88, 15.88, 15.88,
       15.88, 15.88, 15.88, 15.88, 15.88, 15.88, 15.88, 15.88, 15.89,
       15.89, 15.89, 15.89, 15.89, 15.89, 15.89, 15.89, 15.89, 15.89,
       15.90, 15.90, 15.90, 15.90, 15.90, 15.90, 15.90, 15.90, 15.90,
       15.90, 16.00, 16.72, 17.15, 17.19, 17.33, 17.57, 17.63, 17.87,
       18.34, 18.49, 18.76, 19.14, 19.48, 19.79, 20.07, 20.17, 20.32,
       20.55, 20.77, 20.97, 21.15, 21.33, 21.49, 21.63, 21.64, 21.68,
       21.77, 21.91, 22.03, 22.06, 22.14, 22.25, 22.36, 22.43, 22.46,
       22.55, 22.63, 22.63, 22.72, 22.80, 22.88, 22.94, 22.95, 23.02,
       23.08, 23.14, 23.21, 23.26, 23.32, 23.37, 23.42, 23.47, 23.52,
       23.57, 23.61, 23.65, 23.69, 23.71, 23.73, 23.77, 23.81, 23.84,
       23.88, 23.91, 23.95, 23.98, 24.01, 24.04, 24.06, 24.09, 24.11,
       24.12, 24.15, 24.17, 24.20, 24.22, 24.24, 24.27, 24.29, 24.31,
       24.33, 24.35, 24.38, 24.39, 24.41, 24.43, 24.45, 24.47, 24.49,
       24.50, 24.52, 24.54, 24.55, 24.57, 24.58, 24.60, 24.61, 24.63,
       24.64, 24.66, 24.67, 24.68, 24.70, 24.71, 24.72, 24.73, 24.75,
       24.76, 24.77, 24.78, 24.79, 24.80, 24.81, 24.82, 24.83, 24.84,
       24.85, 24.86, 24.87, 24.88, 24.89, 24.90, 24.91, 24.92, 24.93,
       24.94, 24.95, 24.95, 24.96, 24.97, 24.98, 24.99, 24.99, 25.00,
       25.01, 25.02, 25.02, 25.03, 25.04, 25.04, 25.05, 25.06, 25.06,
       25.07, 25.08, 25.08, 25.09, 25.10, 25.10, 25.11, 25.12, 25.12,
       25.13, 25.13, 25.14, 25.14, 25.15, 25.15, 25.16, 25.17, 25.17,
       25.18, 25.18, 25.19, 25.19, 25.20, 25.20, 25.21, 25.21, 25.21,
       25.22, 25.22, 25.23, 25.23, 25.24, 25.24, 25.25, 25.25, 25.26,
       25.26, 25.26, 25.27, 25.27, 25.28, 25.28, 25.28, 25.29, 25.29,
       25.29, 25.30, 25.30, 25.31, 25.31, 25.31, 25.32, 25.32, 25.32,
       25.33, 25.33, 25.33, 25.34, 25.34, 25.34, 25.35, 25.35, 25.35,
       25.36, 25.36, 25.36, 25.37, 25.37, 25.37, 25.38, 25.38, 25.38,
       25.38, 25.39, 25.39, 25.39, 25.39, 25.40, 25.40, 25.40, 25.41,
       25.41, 25.41, 25.41, 25.42, 25.42, 25.42, 25.42, 25.43, 25.43,
       25.43, 25.43, 25.44, 25.44, 25.44, 25.44, 25.45, 25.45, 25.45,
       25.45, 25.46, 25.46, 25.46, 25.46, 25.46, 25.47, 25.47, 25.47,
       25.47, 25.47, 25.48, 25.48, 25.48, 25.48, 25.48, 25.49, 25.49,
       25.49, 25.49, 25.49, 25.50, 25.50, 25.50, 25.50, 25.50, 25.51,
       25.51, 25.51, 25.51, 25.51, 25.52, 25.52, 25.52, 25.52, 25.52,
       25.53, 25.53, 25.53, 25.53, 25.53, 25.54, 25.54, 25.54, 25.54,
       25.54, 25.54, 25.55, 25.55, 25.55, 25.55, 25.55, 25.56, 25.56,
       25.56, 25.56, 25.56, 25.57, 25.57, 25.57, 25.57, 25.57, 25.58,
       25.58, 25.58, 25.58, 25.58, 25.59, 25.59, 25.59, 25.59, 25.59,
       25.60, 25.60, 25.60, 25.60, 25.60, 25.61, 25.61, 25.61, 25.61,
       25.61, 25.62, 25.62, 25.62, 25.62, 25.62, 25.62, 25.63, 25.63,
       25.63, 25.63, 25.63, 25.64, 25.64, 25.64, 25.64, 25.64, 25.65,
       25.65, 25.65, 25.65, 25.65, 25.66, 25.66, 25.66, 25.66, 25.66,
       25.67, 25.67, 25.67, 25.67, 25.67, 25.68, 25.68, 25.68, 25.68,
       25.68, 25.69, 25.69, 25.69, 25.69, 25.69, 25.70, 25.70, 25.70,
       25.70, 25.70, 25.71, 25.71, 25.71, 25.71, 25.71, 25.71, 25.72,
       25.72, 25.72, 25.72, 25.72, 25.73, 25.73, 25.73, 25.73, 25.73,
       25.74, 25.74, 26.09, 26.47, 26.56, 26.95, 27.05, 27.72, 28.13,
       28.39, 28.44, 29.00, 29.09, 29.69, 30.10, 30.25, 30.76, 31.24,
       31.33, 31.68, 31.82, 32.10, 32.48, 32.55, 32.85, 33.19, 33.46,
       33.51, 33.82, 34.10, 34.14, 34.37, 34.51, 34.63, 34.87, 35.02,
       35.10, 35.32, 35.43, 35.53, 35.73, 35.92, 36.10, 36.28, 36.44,
       36.60, 36.75, 36.90, 37.04, 37.18, 37.24, 37.31, 37.43, 37.55,
       37.67, 37.78, 37.89, 37.91, 38.00, 38.09, 38.20, 38.29, 38.38,
       38.47, 38.55, 38.55, 38.64, 38.72, 38.72, 38.80, 38.87, 38.92,
       38.95, 39.02, 39.09, 39.16, 39.22, 39.29, 39.35, 39.41, 39.47,
       39.53, 39.58, 39.64, 39.69, 39.75, 39.80, 39.84, 39.89, 39.94,
       39.99, 40.04, 40.08, 40.12, 40.16, 40.21, 40.25, 40.29, 40.32,
       40.36, 40.40, 40.44, 40.47, 40.47, 40.51, 40.54, 40.58, 40.61,
       40.64, 40.66, 40.68, 40.71, 40.73, 40.77, 40.80, 40.82, 40.85,
       40.88, 40.91, 40.94, 40.96, 40.99, 41.02, 41.04, 41.06, 41.09,
       41.11, 41.14, 41.16, 41.18, 41.20, 41.23, 41.25, 41.27, 41.29,
       41.31, 41.33, 41.35, 41.37, 41.39, 41.41, 41.43, 41.45, 41.47,
       41.48, 41.50, 41.52, 41.54, 41.55, 41.57, 41.59, 41.61, 41.62,
       41.64, 41.66, 41.67, 41.68, 41.70, 41.71, 41.73, 41.75, 41.76,
       41.77, 41.79, 41.80, 41.82, 41.83, 41.84, 41.86, 41.87, 41.88,
       41.89, 41.91, 41.92, 41.93, 41.95, 41.96, 41.97, 41.98, 41.99,
       42.00, 42.02, 42.02, 42.04, 42.05, 42.06, 42.07, 42.08, 42.09,
       42.10, 42.11, 42.12, 42.13, 42.14, 42.15, 42.16, 42.17, 42.18,
       42.19, 42.20, 42.21, 42.21, 42.23, 42.23, 42.24, 42.25, 42.26,
       42.27, 42.28, 42.29, 42.29, 42.30, 42.31, 42.32, 42.32, 42.33,
       42.34, 42.34, 42.35, 42.36, 42.37, 42.38, 42.38, 42.39, 42.39,
       42.40, 42.41, 42.42, 42.43, 42.43, 42.44, 42.45, 42.45, 42.46,
       42.46, 42.47, 42.48, 42.48, 42.49, 42.50, 42.50, 42.51, 42.52,
       42.52, 42.53, 42.54, 42.54, 42.55, 42.55, 42.56, 42.57, 42.57,
       42.58, 42.59, 42.59, 42.59, 42.60, 42.61, 42.61, 42.62, 42.62,
       42.63, 42.63, 42.64, 42.64, 42.65, 42.66, 42.66, 42.66, 42.67,
       42.68, 42.68, 42.68, 42.69, 42.70, 42.70, 42.70, 42.71, 42.71,
       42.72, 42.72, 42.73, 42.73, 42.74, 42.74, 42.75, 42.75, 42.75,
       42.76, 42.77, 42.77, 42.77, 42.78, 42.78, 42.79, 42.79, 42.79,
       42.80, 42.80, 42.80, 42.81, 42.81, 42.82, 42.82, 42.83, 42.83,
       42.84, 42.84, 42.84, 42.85, 42.85, 42.86, 42.86, 42.86, 42.87,
       42.87, 42.88, 42.88, 42.88, 42.89, 42.89, 42.89, 42.90, 42.90,
       42.91, 42.91, 42.91, 42.92, 42.92, 42.93, 42.93, 42.93, 42.94,
       42.94, 42.95, 42.95, 42.95, 42.96, 42.96, 42.96, 42.97, 42.97,
       42.98, 42.98, 42.98, 42.99, 42.99, 43.00, 43.00, 43.00, 43.01,
       43.01, 43.02, 43.02, 43.02, 43.03, 43.03, 43.04, 43.04, 43.04,
       43.05, 43.05, 43.05, 43.06, 43.06, 43.07, 43.07, 43.07, 43.08,
       43.08, 43.09, 43.09, 43.09, 43.10, 43.10, 43.11, 43.11, 43.11,
       43.12, 43.12, 43.12, 43.13, 43.13, 43.14, 43.14, 43.14, 43.15,
       43.15, 43.16, 43.16, 43.16, 43.17, 43.17, 43.18, 43.18, 43.18,
       43.19, 43.19, 43.20, 43.20, 43.20, 43.21, 43.21, 43.21, 43.22,
       43.22, 43.23, 43.23, 43.23, 43.24, 43.24, 43.25, 43.25, 43.25,
       43.26, 43.26, 43.27, 43.27, 43.27, 43.28, 43.28, 43.29, 43.29,
       43.29, 43.30, 43.30, 43.30, 43.31, 44.49, 44.76, 45.04, 45.26,
       45.65, 46.51, 46.74, 47.46, 47.67, 47.76, 48.72, 49.62, 50.08,
       50.30, 50.46, 51.26, 51.56, 52.02, 52.04, 52.73, 53.41, 53.45,
       54.05, 54.05, 54.66, 55.18, 55.24, 55.79, 56.32, 56.61, 56.82,
       57.30, 57.76, 57.80, 58.20, 58.23, 58.62, 58.95, 59.02, 59.09,
       59.41, 59.54, 59.59, 59.78, 60.14, 60.48, 60.81, 61.13, 61.43,
       61.73, 62.02, 62.29, 62.40, 62.56, 62.82, 63.07, 63.20, 63.31,
       63.54, 63.77, 63.98, 64.19, 64.20, 64.41, 64.61, 64.72, 64.80,
       64.99, 65.04, 65.17, 65.35, 65.52, 65.70, 65.86, 66.02, 66.17,
       66.33, 66.48, 66.62, 66.76, 66.90, 67.03, 67.16, 67.29, 67.41,
       67.53, 67.54, 67.66, 67.77, 67.89, 68.00, 68.11, 68.21, 68.26,
       68.32, 68.42, 68.46, 68.52, 68.62, 68.72, 68.81, 68.90, 68.99,
       69.08, 69.16, 69.25, 69.34, 69.41, 69.50, 69.58, 69.66, 69.73,
       69.80, 69.88, 69.88, 69.95, 70.02, 70.09, 70.16, 70.23, 70.29,
       70.36, 70.42, 70.48, 70.55, 70.61, 70.66, 70.73, 70.78, 70.84,
       70.90, 70.95, 70.99, 71.01, 71.06, 71.11, 71.16, 71.22, 71.27,
       71.31, 71.37, 71.41, 71.46, 71.51, 71.55, 71.60, 71.64, 71.69,
       71.71, 71.73, 71.77, 71.82, 71.86, 71.90, 71.94, 71.98, 72.02,
       72.06, 72.10, 72.14, 72.17, 72.21, 72.25, 72.28, 72.32, 72.36,
       72.39, 72.42, 72.46, 72.49, 72.52, 72.56, 72.59, 72.62, 72.66,
       72.69, 72.72, 72.75, 72.78, 72.80, 72.84, 72.87, 72.90, 72.92,
       72.95, 72.98, 73.01, 73.04, 73.06, 73.09, 73.12, 73.14, 73.16,
       73.17, 73.20, 73.22, 73.24, 73.27, 73.30, 73.32, 73.34, 73.37,
       73.39, 73.41, 73.44, 73.46, 73.48, 73.50, 73.52, 73.55, 73.57,
       73.59, 73.61, 73.63, 73.66, 73.67, 73.70, 73.72, 73.73, 73.76,
       73.77, 73.80, 73.81, 73.84, 73.85, 73.88, 73.89, 73.91, 73.93,
       73.95, 73.96, 73.98, 74.00, 74.02, 74.03, 74.05, 74.07, 74.09,
       74.10, 74.12, 74.13, 74.16, 74.17, 74.19, 74.20, 74.20, 74.22,
       74.23, 74.25, 74.27, 74.28, 74.30, 74.31, 74.33, 74.34, 74.35,
       74.37, 74.38, 74.40, 74.41, 74.43, 74.44, 74.45, 74.47, 74.48,
       74.49, 74.51, 74.52, 74.54, 74.55, 74.56, 74.58, 74.59, 74.60,
       74.61, 74.62, 74.64, 74.65, 74.66, 74.67, 74.69, 74.70, 74.71,
       74.73, 74.73, 74.75, 74.76, 74.77, 74.78, 74.79, 74.80, 74.81,
       74.83, 74.84, 74.85, 74.86, 74.87, 74.88, 74.89, 74.91, 74.91,
       74.92, 74.94, 74.95, 74.95, 74.96, 74.98, 74.98, 74.99, 75.01,
       75.02, 75.02, 75.03, 75.05, 75.05, 75.06, 75.07, 75.08, 75.09,
       75.10, 75.11, 75.12, 75.12, 75.14, 75.15, 75.16, 75.16, 75.17,
       75.18, 75.19, 75.20, 75.21, 75.22, 75.23, 75.23, 75.24, 75.25,
       75.26, 75.27, 75.27, 75.28, 75.29, 75.30, 75.30, 75.31, 75.32,
       75.33, 75.34, 75.34, 75.35, 75.36, 75.37, 75.38, 75.38, 75.39,
       75.40, 75.41, 75.41, 75.42, 75.43, 75.44, 75.45, 75.45, 75.46,
       75.47, 75.48, 75.48, 75.49, 75.50, 75.51, 75.52, 75.52, 75.53,
       75.54, 75.55, 75.55, 75.56, 75.57, 75.58, 75.59, 75.59, 75.60,
       75.61, 75.62, 75.62, 75.63, 75.64, 75.65, 75.66, 75.66, 75.67,
       75.68, 75.69, 75.70, 75.70, 75.71, 75.72, 75.73, 75.73, 75.74,
       75.75, 75.76, 75.77, 75.77, 75.78, 75.79, 75.80, 75.80, 75.81,
       75.82, 75.83, 75.84, 75.84, 75.85, 75.86, 75.87, 75.88, 75.88,
       75.89, 75.90, 75.91, 75.91, 75.92, 75.93, 75.94, 75.95, 75.95,
       75.96, 75.97, 75.98, 75.98, 75.99, 76.00, 76.01, 76.02, 76.02,
       76.03, 76.04, 76.05, 76.05, 76.06, 76.07, 76.08, 76.09, 76.09,
       76.10, 76.11, 76.12, 76.12, 76.13, 76.14, 76.15, 76.16, 76.16,
       76.17, 76.18, 76.19, 76.20, 76.20, 76.21, 76.22, 77.02, 78.06,
       78.24, 79.25, 79.85}
     \draw (\x,0) -- (\x,2);
   \end{tikzpicture}}
 \]
 The first six cluster points provided by
 Corollary~\ref{cor:limit-points}, the numbers $6,10,16,26,44,78$ are
 indicated in red.  One can observe that the rate of convergence is
 fairly slow.
\end{figure}

\subsection{Methods} \label{subsec:Lie-corr} Finally we give a brief
indication how pro-isomorphic zeta functions of nilpotent groups
relate to zeta functions of algebraic groups.  This gives us the
opportunity to discuss the methods used and the relevance of our
specific results in the context of the general theory.

To a finitely generated, torsion-free nilpotent group $\Gamma$ one
associates, via Lie theory, a $\Z$-Lie lattice $L$ of finite rank,
whose local zeta functions $\zeta^\wedge_{L_p}(s) = \sum_{k=0}^\infty
b_{p^k}^\wedge(L_p)p^{-ks}$ satisfy $\zeta^\wedge_{\Gamma,p}(s) =
\zeta^\wedge_{L_p}(s)$ for almost all rational primes~$p$.  Here $L_p
= \Z_p \otimes_{\Z} L$ denotes the $p$-adic completion of $L$, and
$b_{p^k}^\wedge(L_p)$ is the number of Lie sublattices of $L_p$ of
index $p^k$ which are isomorphic to $L_p$,  or equivalently, are the
image of $L_p$ under a Lie endomorphism of $\Q_p \otimes L$.  Next
recall the notion of the \emph{algebraic automorphism group} of~$L$:
this group $\aG = \mathsf{Aut}(L)$ is realised, via a $\Z$-basis of
$L$, as an affine group scheme $\aG \subseteq \GL_d$ over
$\mathbb{Z}$, where $d$ denotes the $\Z$-rank of $L$, so that
$\Aut_K(K \otimes_\Z L) \cong \aG(K) \subseteq \GL_{d}(K)$ for every
extension field $K$ of~$\Q$.  With the given arithmetic structure, we
have $\Aut(L) \cong \aG(\Z)$ and $\Aut(L_p) \cong \aG(\Z_p)$ for every
rational prime~$p$.  In~~\cite{GrSeSm88}, Grunewald, Segal and Smith
showed that
\begin{equation*} \zeta^\wedge_{\Gamma,p}(s) = \zeta_{L_p}^{\wedge}(s)
  = \mathcal{Z}_{\aG,p}(s), 
\end{equation*}
where $\mathcal{Z}_{\aG,p}(s)$ is a special instance of the
$\fp$-adic integral~\eqref{equ:zeta-alg-group}; the first equality
holds for almost all primes and the second for all primes~$p$.

An explicit finite form for $\mathcal{Z}_{\aG,\fp}(s)$, subject to
technical restrictions on $\aG \subseteq \GL_d$, was obtained by
Igusa~\cite{Ig89}, subsequently generalised by du Sautoy and
Lubotzky~\cite{dSLu96} and by the first author~\cite{Be11}.  The
essential idea behind these results is to reduce the
integral~\eqref{equ:zeta-alg-group} to an integral over a reductive
$\fp$-adic group $\aH_\fp$ and then apply a $\fp$-adic Bruhat
decomposition established by Iwahori and Matsumoto~\cite{IwMa65}.
Upon further combinatorial analysis one obtains a weighted sum of
generating functions over cones, indexed by elements of the Weyl group
of~$\aH$.

The reduction of the integral~\eqref{equ:zeta-alg-group} to an
integral over a connected reductive group, due to du Sautoy and
Lubotzky, can be summarised as follows.

\begin{theorem}[{cf.~\cite[Theorem~2.2]{dSLu96}}]\label{theorem:reduction}
  Under various assumptions on $\aG$ and~$\fp$, which are satisfied
  for almost all primes~$\fp$,
  \[
  \mathcal{Z}_{\aG,\fp}(s) = \int_{\aH^+_\fp} \lvert \det(h)
  \rvert_\fp^s \, \theta(h) \, d\mu_{\aH_\fp}(h),
  \]
  where $\aH$ is a connected reductive complement in $\aG^\circ$ of
  the unipotent radical $\aN$ of $\aG$ and $\theta(h) =
  \mu_{\aN_\fp}(\{n \in \aN_\fp \mid nh \in \aG_\fp^+ \})$ for each $h
  \in \aH_\fp^+$.
\end{theorem}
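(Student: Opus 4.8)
The plan is to derive this from Mostow's theorem on the existence of a reductive Levi complement together with a change of variables in the $\fp$-adic integral~\eqref{equ:zeta-alg-group}; the finer combinatorial analysis via Bruhat decompositions is the subject of the following step and is not needed here. I would begin with two harmless reductions. First, since $\aG^\circ$ has finite index in $\aG$, for all but finitely many $\fp$ one controls the contribution of the non-identity components, and the ``various assumptions'' are tailored so that it suffices to integrate over $\aG^\circ(\fk_\fp)$; I then replace $\aG$ by $\aG^\circ = \aN \rtimes \aH$, its Mostow--Levi decomposition over $\fk$ into the unipotent radical $\aN$ and a connected reductive complement $\aH$. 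Second, because two $\fo$-structures on a fixed $\fk$-group coincide after inverting finitely many primes, the local integral $\mathcal{Z}_{\aG,\fp}(s)$ is unaffected for almost all $\fp$ if I replace the given embedding $\aG \subseteq \GL_d$ by a more convenient one. Using that a normal unipotent subgroup fixes a nonzero vector on every nonzero module, I build a $\aG$-stable flag $0 = V_0 \subset V_1 \subset \cdots \subset V_k = \fk^d$ on which $\aN$ acts trivially on each graded piece; since $\aH$ is reductive this flag is split by an $\aH$-stable decomposition $\fk^d = \bigoplus_i W_i$ with $V_j = \bigoplus_{i \le j} W_i$. In a basis adapted to the $W_i$, the group $\aN$ acts by block upper unitriangular matrices and $\aH$ by block diagonal matrices.

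Next, for almost all $\fp$ the multiplication map $\aN \times \aH \to \aG^\circ$ is an isomorphism of $\fo_\fp$-schemes, so $(n,h) \mapsto nh$ is a bijection $\aN_\fp \times \aH_\fp \to \aG^\circ_\fp$ carrying $\aN(\fo_\fp) \times \aH(\fo_\fp)$ onto $\aG^\circ(\fo_\fp)$. As $\aN$ is unipotent and $\aH$ reductive, both are unimodular, and a direct calculation on the semidirect product shows that the pushforward of $\mu_{\aN_\fp} \otimes \mu_{\aH_\fp}$ is a right Haar measure on $\aG_\fp$; it is correctly normalised because $\mu_{\aN_\fp}(\aN(\fo_\fp)) = \mu_{\aH_\fp}(\aH(\fo_\fp)) = 1 = \mu_{\aG_\fp}(\aG(\fo_\fp))$. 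Since $\det$ is trivial on the unipotent group $\aN$ we have $\lvert \det(nh) \rvert_\fp = \lvert \det(h) \rvert_\fp$, so Fubini's theorem --- legitimate since for real $s$ the integrand is non-negative and the integral converges for $\mathrm{Re}(s)$ large --- yields
\[
\mathcal{Z}_{\aG,\fp}(s) = \int_{\aH_\fp} \lvert \det(h) \rvert_\fp^s \left( \int_{\aN_\fp} \mathbf{1}_{\aG_\fp^+}(nh) \, d\mu_{\aN_\fp}(n) \right) d\mu_{\aH_\fp}(h) = \int_{\aH_\fp} \lvert \det(h) \rvert_\fp^s \, \theta(h) \, d\mu_{\aH_\fp}(h) .
\]
It remains to see that $\theta$ is supported on $\aH_\fp^+$. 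In the chosen basis $nh$ is block upper triangular with diagonal blocks equal to those of $h$; consequently $nh \in \Mat_d(\fo_\fp)$ forces each diagonal block of $h$ to lie in $\Mat(\fo_\fp)$, hence the block diagonal matrix $h$ itself lies in $\Mat_d(\fo_\fp)$, i.e.\ $h \in \aH_\fp^+$. (The same block description shows $\theta(h) < \infty$ for $h \in \aH_\fp^+$, since each off-diagonal constraint $n_{ik} h_{kk} \in \Mat(\fo_\fp)$ cuts out a compact set once $h_{kk}$ is invertible.) Restricting the outer integral to $\aH_\fp^+$ gives the asserted identity.

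I expect the genuine difficulty to lie not in any single step but in the bookkeeping packaged into ``various assumptions, satisfied for almost all primes'': one must check that, after discarding a finite set of primes, one may simultaneously disregard the non-identity components of $\aG$, install the block embedding above, and propagate both the Levi decomposition and the finiteness of $\theta$ to the integral level. Mostow's theorem supplies the structural input over $\fk$; the measure computation on the semidirect product and the change of variables are routine.
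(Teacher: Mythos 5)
The paper does not actually present a proof of this statement: it is cited (with a ``cf.'') from du Sautoy--Lubotzky \cite[Theorem~2.2]{dSLu96}, both here and again, in a more refined form involving the factorisation $\theta = \prod_{i=1}^{c-1}\theta_i$, as Theorem~\ref{thm:dS-Lu-weak-version}. So there is no in-paper argument to compare against; I will instead assess your proposal against what the cited reference does.

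Your reconstruction is sound, and it is essentially the argument one finds in du Sautoy--Lubotzky for the ``easy half'' of their Theorem~2.2. The ingredients you use --- the Mostow--Levi decomposition of $\aG^\circ$ over $\fk$, the reduction to a block upper-unitriangular/block-diagonal embedding after inverting finitely many primes, the observation that $\aN$ and $\aH$ are both unimodular so that $\mu_{\aN_\fp} \otimes \mu_{\aH_\fp}$ pushes forward (under $(n,h)\mapsto nh$) to a right Haar measure on $\aG^\circ_\fp$, the invariance $\lvert\det(nh)\rvert_\fp = \lvert\det(h)\rvert_\fp$, the Fubini step, and the support argument forcing $\theta$ to vanish off $\aH_\fp^+$ --- are exactly right and match the cited source. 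Your verification that the product measure is right-invariant (writing $nh\cdot n'h' = (n\cdot hn'h^{-1})(hh')$ and using right-invariance of $\mu_N$ and $\mu_H$ together with unimodularity of $N$) is the correct and non-trivial measure-theoretic step. You are also right that the normalisation matches once the spreading-out argument guarantees $\aN(\fo_\fp)\aH(\fo_\fp) = \aG^\circ(\fo_\fp)$ for almost all $\fp$.

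One point worth making explicit: your argument proves the version of the theorem stated here --- $\mathcal{Z}_{\aG,\fp}(s) = \int_{\aH_\fp^+}\lvert\det(h)\rvert_\fp^s\,\theta(h)\,d\mu_{\aH_\fp}(h)$ with $\theta$ defined as a Haar measure of a slice of $\aN_\fp$ --- without invoking the ``lifting condition''. That condition is needed only for the sharper assertion, recorded by the paper as Theorem~\ref{thm:dS-Lu-weak-version}, that $\theta$ factors as $\prod_{i=1}^{c-1}\theta_i$, which is what makes the integral computable layer by layer. Since the statement you were asked to prove is the weaker one, your omission of the lifting condition is not a gap but a correct economisation. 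It would, however, be worth noting in the write-up that the stronger version (which the paper actually uses downstream) requires this extra input, so that a reader does not confuse the two.
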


Despite an encouraging range of valuable insights, our understanding
of pro-isomorphic zeta functions of finitely generated nilpotent
groups is far from complete.  For instance, in \cite{Be11}, the first
author generalised earlier results of Igusa~\cite{Ig89} and du Sautoy
and Lubotzky~\cite{dSLu96} showing that the pro-isomorphic zeta
functions of split algebraic groups satisfy -- under suitable
technical conditions -- local functional equations.  However, it is
currently not possible to effectively predict whether the technical
assumptions involved hold for the algebraic automorphism group
associated to a finitely generated nilpotent group without actually
determining the automorphism groups in full.  Moreover, the algebraic
automorphism groups that have been described explicitly display only a
comparatively small degree of complexity, especially regarding the
unipotent radical.

It is a natural task to reveal more variety in the features of
pro-isomorphic zeta functions by studying new families of groups where
we continue to have some level of control.  In the current paper we
treat the finitely generated, torsion-free class-$2$ nilpotent groups
$\Delta_{m,n}$ and obtain further evidence for the existence of local
functional equations for pro-isomorphic zeta functions in nilpotency
class~$2$.

The function $\theta(h)$ that features in
Theorem~\ref{theorem:reduction} is perhaps the least understood
ingredient in the study of pro-isomorphic zeta functions of nilpotent
groups.  In~\cite[Theorem~2.3]{dSLu96}, du Sautoy and Lubotzky state
that, if the $\fp$-adic group $\aH_\fp$ arises from the reductive part
$\aH$ of the algebraic automorphism group associated to an arbitrary
class-$2$ nilpotent group, then the function $\theta(h)$ is a
character on~$\aH_\fp$.  In the current paper, we find counterexamples
to this statement. Indeed, for the algebraic automorphism groups
associated to $\Delta_{m,n}$ we find that $\theta(h)$ is obtained in
general not from a character but rather from a piecewise-character on
a maximal torus in the sense considered in \cite[Lemma~3.12]{Be11},
thus giving a realisation of this behaviour of $\theta(h)$ for an
infinite family of groups.  Previously this phenomenon was known to
occur only from one isolated example, namely the group of upper
unitriangular $4 \times 4$ matrices over~$\Z$.

In our preprint~\cite{BeKlOnX0}, on pro-isomorphic zeta functions of
$D^*$-groups of even Hirsch length, we describe an example of a
$D^*$-group for which the function $\theta(h)$ is highly exotic and
very far from being a character on~$\aH_\fp$.  Thus even within the
comparatively restrictive setting of class-$2$ nilpotent groups, the
picture is much more complex than previously expected and deserves
further study.

\begin{ackno}
  The first author thanks the University of Cape Town and Ort Braude
  College for travel grants.  We are grateful to Christopher Voll for
  several helpful comments on an early draft of the paper.
\end{ackno}


\section{The Lie lattices $L_{m,n}$ and their algebraic automorphism groups}

\begin{definition} \label{def-Lmn}

Let $m,n \in \N$ with $n \geq 2$.  Put
\begin{align*}
  \mathbf{E} & = \{ \ee \mid \ee = (e_1, \ldots, e_n) \in \N_0^n
  \text{ with $e_1 + \ldots + e_n = m-1$} \}, \\
  \mathbf{F} & = \{ \ff \mid \ff = (f_1,\ldots,f_n) \in \N_0^n \text{
    with $f_1 + \ldots + f_n = m$} \}.
\end{align*}

We consider the $\Z$-Lie lattice $L = L_{m,n}$ on the generators
\begin{equation} \label{equ:xyz-basis} \{ x_\ee, \mid \ee \in
  \mathbf{E} \} \cup \{ y_\ff \mid \ff \in \mathbf{F} \} \cup \{ z_j
  \mid j \in \{1, \ldots, n\} \},
\end{equation}
so that $L$ has $\Z$-rank
\[
\mathrm{rank}_\Z( L) = \binom{m+n-2}{n-1} + \binom{m+n-1}{n-1} + n,
\]
subject to the defining relations
\[
[x_\ee,x_{\ee'}] = [y_\ff,y_{\ff'}] = [x_\ee,z_j] = [y_\ff,z_j] =
[z_j,z_{j'}] = 0
\]
for all $\ee, \ee' \in \mathbf{E}$, $\, \ff, \ff' \in \mathbf{F}$, $\, j, j'
\in \{1,\ldots,n\}$ and
\[
[x_\ee,y_\ff] =
\begin{cases} 
  z_i & \text{if $\ff - \ee$ is of the form
    $(\underbrace{0,\ldots,0}_{i-1 \text{
        entries}},1,\underbrace{0,\ldots,0}_{n-i \text{ entries}})$
    for some $i$,} \\
  0 & \text{if $\ff - \ee$ is not of this form}
\end{cases}
\]
for $\ee \in \mathbf{E}$ and $\ff \in \mathbf{F}$.  Consequently, $L$
is nilpotent of class $2$, and
\[
\mathrm{Z}(L) = [L,L] = \langle z_1, \ldots, z_n \rangle
\]
has $\Z$-rank~$n$.  
\end{definition}

\begin{remark}\label{remark:Lie-correspondence}
  We observe that $L = L_{m,n}$ can be naturally identified with a
  graded Lie ring associated to the class-$2$ nilpotent group
  $\Delta = \Delta_{m,n}$ introduced in
  Section~\ref{sec:groups-Delta}.  Indeed, because $\Delta$ has class
  $2$, it is elementary to set up a naive Lie correspondence as
  follows: $\Delta / \mathrm{Z}(\Delta) \oplus \mathrm{Z}(\Delta)$
  carries the structure of a graded Lie ring with respect to the usual
  Lie bracket operation induced by commutation in~$\Delta$.
  Conversely, the Lie ring $L$ fully determines the group~$\Delta$.
  For any rational prime $p$, we denote by $L_p = \Z_p \otimes_{\Z} L$
  the $p$-adic completion of $L$ and by $b_{p^k}^\wedge(L_p)$ the
  number of Lie sublattices of $L_p$ of index $p^k$ that are
  isomorphic to $L_p$.  Then the local zeta function
  $\zeta^\wedge_{L_p}(s) = \sum_{k=0}^\infty
  b_{p^k}^\wedge(L_p)p^{-ks}$
  coincides with the local zeta function $\zeta^\wedge_{\Delta,p}(s)$
  of the group~$\Delta$.
\end{remark}

We fix a field $K$ and put $L_K = K \otimes_\Z L$.  Our task is to
determine the automorphism group $\Aut(L_K)$ as a subgroup
of~$\GL(L_K)$, with reference to the basis~\eqref{equ:xyz-basis}.

\begin{lemma} \label{lem:max-ab-ideal} The $K$-space $A = \langle
  y_\ff \mid \ff \in \mathbf{F} \rangle + \mathrm{Z}(L_K)$ constitutes the unique
  abelian Lie-ideal of dimension $\binom{m+n-1}{n-1} + n$ in~$L_K$.
  Consequently, $A$ is $\Aut(L_K)$-invariant.
\end{lemma}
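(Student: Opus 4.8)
\emph{Proof plan.} That $A$ is an abelian Lie-ideal is immediate: the $y_\ff$ commute among themselves and with the centre, $[x_\ee,y_\ff]\in\mathrm{Z}(L_K)$, and $[x_\ee,z_j]=0$; counting basis vectors gives $\dim_K A=\binom{m+n-1}{n-1}+n$. Since any automorphism of $L_K$ carries an abelian ideal to an abelian ideal of the same dimension, the asserted $\Aut(L_K)$-invariance follows once we show that $A$ is the \emph{only} abelian ideal of this dimension. The plan is to translate this uniqueness first into a statement about isotropic subspaces of $L_K/\mathrm{Z}(L_K)$ for the commutator form, and then into an elementary fact about multiplying polynomials by linear forms.

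Write $V=L_K/\mathrm{Z}(L_K)=\bar X\oplus\bar Y$, where $\bar X=\langle\bar x_\ee\mid\ee\in\mathbf{E}\rangle$ and $\bar Y=\langle\bar y_\ff\mid\ff\in\mathbf{F}\rangle$, and let $\beta_1,\dots,\beta_n\colon V\times V\to K$ be the alternating forms determined by $[\,\cdot\,,\cdot\,]=\sum_{i=1}^n\beta_i(\,\cdot\,,\cdot\,)\,z_i$; each $\beta_i$ vanishes on $\bar X\times\bar X$ and on $\bar Y\times\bar Y$, and $\beta_i(\bar x_\ee,\bar y_\ff)=1$ exactly when $\ff-\ee$ is the $i$-th standard basis vector of $\N_0^n$. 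Call $W\subseteq V$ \emph{isotropic} if $\beta_i(W,W)=0$ for all $i$. If $B$ is an abelian ideal of $L_K$, then its image $\bar B\subseteq V$ is isotropic and $\dim_K B=\dim_K\bar B+\dim_K\bigl(B\cap\mathrm{Z}(L_K)\bigr)$; since the preimage of $\bar Y$ under $L_K\to V$ is exactly $A$, it suffices to show that every isotropic subspace of $V$ has dimension at most $\lvert\mathbf{F}\rvert=\binom{m+n-1}{n-1}$, with $\bar Y$ the unique one attaining this bound. Indeed, $\dim_K B=\lvert\mathbf{F}\rvert+n$ then forces $\dim_K\bar B=\lvert\mathbf{F}\rvert$ and $\mathrm{Z}(L_K)\subseteq B$, hence $\bar B=\bar Y$ and $B=A$.

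To bound an isotropic $W$, set $U=\pi_X(W)\subseteq\bar X$, where $\pi_X$ is the projection along $\bar Y$; then $\dim_K W=\dim_K U+\dim_K(W\cap\bar Y)$, and a one-line computation with the isotropy of $W$ shows $W\cap\bar Y\subseteq U^{\perp}\coloneqq\{q\in\bar Y\mid\beta_i(U,q)=0\ \text{for all }i\}$, so that $\dim_K W\le\dim_K U+\dim_K U^{\perp}$. Now identify $\bar X$ with the space $R_{m-1}$ of homogeneous polynomials of degree $m-1$ in $R=K[t_1,\dots,t_n]$, and $\bar Y$ with $R_m$, via $\bar x_\ee\leftrightarrow t^{\ee}\coloneqq t_1^{e_1}\cdots t_n^{e_n}$ and $\bar y_\ff\leftrightarrow t^{\ff}$ (and $z_i\leftrightarrow t_i$), so that $\beta_i(\bar x_\ee,\bar y_\ff)=1\iff t^{\ff}=t_it^{\ee}$. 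Transposing the pairing $U\times\bar Y\to\mathrm{Z}(L_K)$ and identifying $\bar Y^{*}$ with $R_m$ through the monomial basis -- under which the functional $\beta_i(u,\,\cdot\,)$ corresponds to $t_iu$ -- one sees that the codimension $\lvert\mathbf{F}\rvert-\dim_K U^{\perp}$ of $U^{\perp}$ in $\bar Y$ equals $\dim_K(R_1\cdot U)$, the dimension of the span of all products $\ell u$ with $\ell\in R_1$ and $u\in U$.

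The lemma thus reduces to the claim -- which I expect to be the only genuine obstacle -- that \emph{for $n\ge 2$ and every nonzero subspace $U\subseteq R_d$ one has $\dim_K(R_1\cdot U)>\dim_K U$.} I would prove this by descent on $d$. If $\dim_K(R_1\cdot U)\le\dim_K U$, then, since multiplication by $t_1$ is injective on the integral domain $R$, we must have $R_1\cdot U=t_1U$, whence $t_2U\subseteq t_1U\subseteq t_1R$; coprimality of $t_1$ and $t_2$ then forces $t_1\mid u$ for every $u\in U$, i.e.\ $U=t_1U'$ for some subspace $U'\subseteq R_{d-1}$ with $\dim_K U'=\dim_K U$ and again $\dim_K(R_1\cdot U')\le\dim_K U'$. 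After $d$ iterations this exhibits $U$ as $t_1^{\,d}U^{(d)}$ with $0\ne U^{(d)}\subseteq R_0=K$, so $U=\langle t_1^{\,d}\rangle$; but then $\dim_K(R_1\cdot U)=n\ge 2>1=\dim_K U$, a contradiction. The argument is characteristic-free and so applies over the arbitrary field $K$ fixed above. Granting the claim, $U\ne 0$ gives $\dim_K W\le\dim_K U+(\lvert\mathbf{F}\rvert-\dim_K U-1)<\lvert\mathbf{F}\rvert$, whereas $U=0$ gives $W\subseteq\bar Y$ and so $\dim_K W\le\lvert\mathbf{F}\rvert$, with equality only for $W=\bar Y$. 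This establishes the dimension bound and the uniqueness, and with them the lemma.
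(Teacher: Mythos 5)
Your proof is correct, and it arrives at the crucial inequality by a genuinely different route from the paper.  Both arguments reduce the uniqueness to the same quantitative fact: with $U=\pi_{\bar X}(\bar B)\neq 0$, one needs $\dim_K U^\perp < \lvert\mathbf{F}\rvert - \dim_K U$ (the paper's \eqref{equ:dim-formula-ab-ideal}).  The paper proves this by a hands-on construction: after fixing a circular lexicographic order on $\mathbf{E}$ and putting a basis of $\overline{V_1}$ into reduced echelon form, it exhibits $r+1$ explicit vectors $\overline{y_{\ff(1)}},\dots,\overline{y_{\ff(r+1)}}$ whose span is disjoint from $\overline{V_1}^\perp$, using a careful case analysis on leading terms.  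You instead identify $\bar X\cong R_{m-1}$, $\bar Y\cong R_m$ with graded pieces of $K[t_1,\dots,t_n]$, observe via the monomial inner product that $\operatorname{codim}_{\bar Y} U^\perp = \dim_K(R_1\cdot U)$, and reduce the whole lemma to the clean commutative-algebra statement that multiplication by the linear forms strictly increases dimension of any nonzero homogeneous subspace when $n\ge 2$.  Your descent on $d$ — if $R_1\cdot U = t_1 U$ then coprimality of $t_1,t_2$ forces $U\subseteq t_1 R$, so one divides by $t_1$ and iterates down to $R_0$ — is correct, characteristic-free, and avoids the explicit echelon/leading-term bookkeeping entirely.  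The paper's construction has the minor advantage of being completely concrete (it names the obstructing vectors), whereas your argument isolates a reusable, self-contained lemma on polynomial rings and is, in my view, easier to check.  One small difference in bookkeeping: you bound the dimension of an arbitrary abelian ideal $B$ directly via $\dim B = \dim\bar B + \dim(B\cap \mathrm Z(L_K))$, whereas the paper restricts to maximal $B$ and uses $\mathrm Z(L_K)\subseteq B$ up front; this is an equivalent normalisation and costs nothing.
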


\begin{proof}
  Clearly, $A$ is an abelian ideal of dimension
  $\binom{m+n-1}{n-1} + n$ in~$L_K$.  Suppose that $B$ is a maximal
  abelian Lie-ideal of $L_K$ and $B \not \subseteq A$.  It suffices to
  show that $\dim_K(B) < \dim_K(A)$.  Clearly,
  $\mathrm{Z}(L_K) \subseteq B$.  We work in
  $\overline{L_K} = L_K/ \mathrm{Z}(L_K) = \overline{V} \oplus
  \overline{W}$,
  where
  $\overline{V} = \langle \overline{x_\ee} \mid \ee \in \mathbf{E}
  \rangle$
  and
  $\overline{W} = \langle \overline{y_\ff} \mid \ff \in \mathbf{F}
  \rangle$.
  As $B \not \subseteq A$, we have $\overline{V_1} \not = 0$, where
  $\overline{V_1} \leq \overline{V}$ is such that
  $\overline{V_1} \oplus \overline{W} = \overline{B} + \overline{W}$.

  We construct a $K$-basis
  $\mathcal{B} = (\overline{v_1(1)}, \ldots, \overline{v_1(r)})$ of
  $\overline{V_1}$ as follows.  Fix $i \in \{1,\ldots,n\}$.  We define
  a lexicographical order on the set $\mathbf{E}$ by declaring, for
  $\ee, \ee' \in \mathbf{E}$,
  \begin{multline*}
    \ee \succ_i \ee' \qquad \text{if $e_i = e_i'$, $e_{i+1} =
    e_{i+1}'$, \ldots, $e_j = e_j'$ and  $e_{j+1} > e_{j+1}'$} \\
    \text{for some $j \in \{i-1,i,\ldots,n,1,2, \ldots, i-2\}$},
  \end{multline*}
  where -- here and in the following -- indices for the coordinates of
  $\ee$ and $\ee'$ are read in a circular way modulo~$n$.
  Accordingly, we write
  \[
  \ee \succeq_i \ee' \qquad \text{if } \ee = \ee' \text{ or } \ee
  \succ_i \ee'.
  \]
  Of course, one can define analogously an order on $\mathbf{F}$ which
  we shall simply refer to by the same symbols $\succ_i$ and
  $\succeq_i$.  By $\mathrm{lt}_i(\overline{v})$ we denote the leading
  term of $\overline{v} \in \overline{V}$ with respect to the ordered
  basis $(\overline{x_\ee} \mid \ee \in \mathbf{E}; \succeq_i)$.

  We now choose the basis $\mathcal{B} = (\overline{v_1(1)}, \ldots,
  \overline{v_1(r)})$ of $\overline{V_1}$ in such a way that
  \[
  \mathrm{lt}_i(\overline{v_1(j)}) = \overline{x_{\ee(j)}} \quad
  \text{for $1 \leq j \leq r$,} 
  \]
  where $\ee(1) \succ_i \ee(2) \succ_i \ldots \succ_i \ee(r)$ and the
  coefficient matrix of the vector system $(\overline{v_1(1)}, \ldots,
  \overline{v_1(r)})$ with respect to the basis $( \overline{x_\ee}
  \mid \ee \in \mathbf{E}; \succeq_i)$ of $\overline{V}$ has reduced
  echelon shape
  \[
  \begin{pmatrix}
    0 \, \cdots \, 0 & 1 & * \, \cdots \, * & 0 & * \, \cdots \, * & 0
    & * \, \cdots \\
    0 \, \cdots \phantom{\, 0} & & \phantom{\, 0} \cdots \, 0 & 1 & *
    \, \cdots \, * & 0 & * \, \cdots \\
    0 \, \cdots \phantom{\, 0} & & \cdots & & \phantom{\, 0} \cdots \, 0 & 1
    & * \, \cdots \\
    \cdots & & \cdots & & \cdots
  \end{pmatrix}.
  \]

  Next we consider $\overline{V_1}^\perp \leq \overline{W}$, where
  \[
  \overline{V_1}^\perp = \{ \overline{w} \in \overline{W} \mid \forall
  j \in \{1\ldots,r\} : [ \overline{v_1(j)}, \overline{w} ] = 0 \}
  \]
  is defined in terms of the induced `Lie bracket map'
  $L_K/ \mathrm{Z}(L_K) \times L_K/ \mathrm{Z}(L_K) \rightarrow
  Z(L_K)$.

  Below we show that
  \begin{equation}\label{equ:dim-formula-ab-ideal}
    \dim_K(\overline{V_1}^\perp) < \dim_K(\overline{W}) -
    \dim_K(\overline{V_1}).
  \end{equation}
  We observe that
  $\dim_K(\overline{B}) = \dim_K(\overline{V_1}) + \dim_K(\overline{B}
  \cap \overline{W})$
  and $\overline{B} \cap \overline{W} \subseteq \overline{V_1}^\perp$,
  the latter because $B$ is an abelian Lie-ideal.  This implies
  \[
  \dim_K(\overline{B}) \leq \dim_K(\overline{V_1}) +
  \dim_K(\overline{V_1}^\perp) < \dim_K(\overline{W}),
  \]
  thus $\dim_K(B) < \dim_K(A)$, as desired.

  \smallskip

  It remains to justify \eqref{equ:dim-formula-ab-ideal}.  For this it
  suffices to produce $\overline{w_1}, \ldots, \overline{w_{r+1}} \in
  \overline{W}$ such that
  \[
  \dim_K(\langle \overline{w_1}, \ldots, \overline{w_{r+1}} \rangle) =
  r+1 = \dim_K(\overline{V_1}) + 1 \quad \text{and} \quad \langle
  \overline{w_1}, \ldots, \overline{w_{r+1}} \rangle \cap
  \overline{V_1}^\perp = 0.
  \]
  Take $\overline{w_j} = \overline{y_{\ff(j)}}$ for $1 \leq j \leq
  r$, where 
  \[
  \ff(j) = \ee(j) + (\underbrace{0,\ldots,0}_{i-1 \text{
      entries}},1,\underbrace{0,\ldots,0}_{n-i \text{ entries}})
  \]
  and $\overline{w_{r+1}} = \overline{y_{\ff(r+1)}}$,
  where
  \[
  \ff(r+1) = \widetilde{\ee} + (\underbrace{0,\ldots,0}_{i-2 \text{
      entries}},1,\underbrace{0,\ldots,0}_{n-i+1 \text{
      entries}})
  \]
  and $\widetilde{\ee}$ is the $\succeq_i$-smallest  index such that at
  least one of $\overline{v_1(1)}$, \ldots, $\overline{v_1(r)}$ has a
  non-zero coefficient for $\overline{x_{\widetilde{\ee}}}$ with
  respect to the basis $(\overline{x_\ee} \mid \ee \in \mathbf{E};
  \succeq_i)$ of $\overline{V}$.

  To see that $\overline{w_1}, \ldots, \overline{w_{r+1}}$ are
  linearly independent, it suffices to show that the indices $\ff(1),
  \ldots, \ff(r+1)$ are pairwise distinct.  Clearly, this is true for
  $\ff(1), \ldots, \ff(r)$, because $\ee(1), \ldots, \ee(r)$ are
  pairwise distinct by construction.  It remains to show that $\ff(j)
  \not = \ff(r+1)$ for $1 \leq j \leq r$.  This follows from $\ee(j)
  \succeq_i \widetilde{\ee}$, implying $\ff(j) \succ_i \ff(r+1)$.

  To show that 
  \[
  \langle \overline{w_1}, \ldots, \overline{w_{r+1}} \rangle \cap
  \overline{V_1}^\perp = 0,
  \]
  we consider $\overline{w} = \sum_{j=1}^{r+1} a_j \overline{w_j} \not
  = 0$.  Put $k = \min \{ j \mid a_j \not = 0 \}$.

  \smallskip

  \noindent \textit{Case} $1$: $1 \leq k \leq r$.  Then there are
  coefficients $b_j$, for $j \not = i$, such that
  \[
  [ \overline{v_1(k)}, \overline{w} ] = [ \overline{x_{\ee(k)}} +
  \ldots, \overline{w} ] = \underbrace{a_k \cdot}_{\not = 0}
  \underbrace{[\overline{x_{\ee(k)}},\overline{y_{\ff(k)}}]}_{= z_i} +
  \sum_{j \not = i} b_j z_j \not = 0,
  \]
  because $\overline{v_1(k)}$ does not involve $\overline{x_{\ee(j)}}$
  for $j \in \{1, \ldots, r\} \smallsetminus \{k\}$ or
  $\overline{x_{\widehat{\ee}}}$ for
  $\widetilde{\ee} \succ_i \widehat{\ee}$.

  \smallskip

  \noindent \textit{Case} $2$: $k = r+1$.  In this case $\overline{w}
  = a_{r+1} \overline{y_{\ff(r+1)}}$ and
  $\overline{x_{\widetilde{\ee}}}$ occurs in $\overline{v_1(j)}$, say,
  with coefficient $c \not = 0$.  Then there are coefficients
  $b_l$, $l \not = i-1$, such that
  \[
  [ \overline{v_1(j)}, \overline{w} ] = \underbrace{c a_{r+1}
    \cdot}_{\not = 0} \underbrace{[\overline{x_{\widetilde{\ee}}},
    \overline{y_{\ff(r+1)}}]}_{= z_{i-1}} + \sum_{l \not = i-1}
  b_l z_l \not = 0. \qedhere
  \]
\end{proof}


\begin{lemma} \label{lem:autom-trival-on-centre} Let $\phi \in
  \Aut(L_K)$ with $\phi \vert_{\mathrm{Z}(L_K)} = \mathrm{id}$.  Then
  there exist $\lambda \in K^\times$ and matrices $C \in \Mat_{\lvert
    \mathbf{E} \rvert, \lvert \mathbf{F} \rvert}(K)$, $D_1 \in
  \Mat_{\lvert \mathbf{E} \rvert, n}(K)$, $D_2 \in \Mat_{\lvert
    \mathbf{F} \rvert, n}(K)$ such that, with respect to the
  basis~\eqref{equ:xyz-basis}, the automorphism $\phi$ is represented
  by the block-matrix
  \[
  \begin{pmatrix}
    \lambda \mathrm{Id}_{\lvert \mathbf{E} \rvert} & C & D_1 \\
    & \lambda^{-1} \mathrm{Id}_{\lvert \mathbf{F} \rvert} & D_2 \\
    & & \mathrm{Id}_n
  \end{pmatrix},
  \]
  where empty positions represent zeros.
\end{lemma}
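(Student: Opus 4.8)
The plan is to combine two structural facts: that $\phi$ fixes $\mathrm{Z}(L_K)$ pointwise, and that, by Lemma~\ref{lem:max-ab-ideal}, the subspace $A = \langle y_\ff \mid \ff \in \mathbf{F}\rangle + \mathrm{Z}(L_K)$ is $\phi$-invariant. Consequently $\phi$ stabilises the chain $\mathrm{Z}(L_K) \subseteq A \subseteq L_K$ and induces linear automorphisms $Q$ of $\bar{W} := A/\mathrm{Z}(L_K)$ and $P$ of $\bar{V} := L_K/A$, with $Q$ acting on $\bar{W} = \langle \bar{y}_\ff \rangle$ and $P$ on $\bar{V} = \langle \bar{x}_\ee \rangle$ via the images of the basis~\eqref{equ:xyz-basis}. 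Since $A = \langle y_\ff \mid \ff \in \mathbf{F}\rangle \oplus \mathrm{Z}(L_K)$, the vectors involved being part of a basis of $L_K$, the block shape asserted in the lemma is equivalent to the two statements $P = \lambda\,\mathrm{id}$ and $Q = \lambda^{-1}\,\mathrm{id}$ for some $\lambda \in K^\times$: the vanishing of the sub-diagonal blocks is exactly $\phi(A) = A$ together with $\phi\vert_{\mathrm{Z}(L_K)} = \mathrm{id}$, while $C$, $D_1$, $D_2$ merely record the otherwise unconstrained components of $\phi(x_\ee)$ in $\langle y_\ff\rangle \oplus \mathrm{Z}(L_K)$ and of $\phi(y_\ff)$ in $\mathrm{Z}(L_K)$. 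So everything reduces to pinning down $P$ and $Q$.

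To do this I would use the Lie bracket, which descends to a bilinear pairing $\langle\,\cdot\,,\,\cdot\,\rangle \colon \bar{V} \times \bar{W} \to \mathrm{Z}(L_K)$ — well defined because $A$ is abelian and $\mathrm{Z}(L_K)$ is central — given on basis vectors by $\langle \bar{x}_\ee, \bar{y}_\ff\rangle = z_i$ when $\ff - \ee$ has $i$-th entry $1$ and all other entries $0$, and $\langle \bar{x}_\ee, \bar{y}_\ff\rangle = 0$ otherwise. Since $\phi$ is a Lie automorphism acting trivially on $\mathrm{Z}(L_K)$, this pairing is preserved: $\langle P\bar{v}, Q\bar{w}\rangle = \langle \bar{v}, \bar{w}\rangle$ for all $\bar{v} \in \bar{V}$, $\bar{w} \in \bar{W}$. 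Now translate into commutative algebra. Put $R = K[t_1,\ldots,t_n]$ and identify $\bar{V}$ with the degree-$(m-1)$ component $R_{m-1}$ via $\bar{x}_\ee \leftrightarrow t^{\ee}$ and $\bar{W}$ with $R_m$ via $\bar{y}_\ff \leftrightarrow t^{\ff}$; then the $i$-th coordinate of $\langle \bar{x}_\ee, \bar{y}_\ff\rangle$ equals $1$ precisely when $t^{\ff} = t_i\, t^{\ee}$. Forming an arbitrary $K$-linear combination $\ell = c_1 t_1 + \cdots + c_n t_n \in R_1$ and combining the $n$ coordinate conditions accordingly, the invariance relations amount to the single requirement that, for every $\ell \in R_1$, the multiplication map $M_\ell \colon R_{m-1} \to R_m$, $g \mapsto \ell g$, satisfies $M_\ell \circ S = T \circ M_\ell$ for invertible operators $S$ of $R_{m-1}$ and $T$ of $R_m$ read off from $P$ and $Q$ by routine transpose-and-inverse bookkeeping; equivalently, $\ell \cdot S(g) = T(\ell g)$ for all $\ell \in R_1$ and $g \in R_{m-1}$.

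The crux is then the elementary claim that any such pair $(S,T)$ is a single scalar operator. For $m = 1$ this is immediate: $R_0 = K$ forces $S$ to be multiplication by some $\mu \in K^\times$, and then $T(\ell) = \mu\ell$ for all $\ell \in R_1$. For $m \geq 2$ I would argue by unique factorisation in $R$. If $\ee, \ee' \in \mathbf{E}$ differ only in that $e_i = e'_i + 1$ and $e'_j = e_j + 1$ for some $i \neq j$, then $t_i\, t^{\ee'} = t_j\, t^{\ee}$, hence $t_i\, S(t^{\ee'}) = T(t_i\, t^{\ee'}) = T(t_j\, t^{\ee}) = t_j\, S(t^{\ee})$, and coprimality of $t_i$ and $t_j$ gives $S(t^{\ee'})/t^{\ee'} = S(t^{\ee})/t^{\ee}$ in the fraction field of $R$. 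As any two monomials of degree $m-1$ are linked by a sequence of such single-variable moves, the ratio $q := S(t^{\ee})/t^{\ee}$ is independent of $\ee$; since $q\, t_i^{m-1} = S(t_i^{m-1}) \in R$ for every $i$ and the $t_i$ are pairwise coprime, $q$ lies in $R$, and, being homogeneous of degree $0$, it is a nonzero scalar $\mu$. Hence $S = \mu\,\mathrm{id}$, and as $R_m = R_1 \cdot R_{m-1}$ also $T = \mu\,\mathrm{id}$. Translating back yields $P = \lambda\,\mathrm{id}$ and $Q = \lambda^{-1}\,\mathrm{id}$ with $\lambda := \mu \in K^\times$, the reciprocity being forced in any case by $\langle P\bar{v}, Q\bar{w}\rangle = \langle \bar{v}, \bar{w}\rangle$ and $\langle \bar{V}, \bar{W}\rangle = \mathrm{Z}(L_K) \neq 0$; this is precisely the block form claimed. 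I expect the main obstacle to be the passage from the bracket constraints to the intertwining of the multiplication maps $M_\ell$ and then the coprimality argument extracting scalarity; the reductions in the first paragraph and the translation back are bookkeeping.
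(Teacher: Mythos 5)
Your proof is correct, but it takes a genuinely different route from the paper's.  Both arguments start the same way (invariance of the flag $\mathrm{Z}(L_K)\subseteq A\subseteq L_K$ forces the block-upper-triangular shape, and the content is to show the diagonal blocks $P$ and $Q$ are reciprocal scalars).  To pin down $P$ and $Q$, the paper fixes an index $i$, passes to the radical $\overline{R_i}=\Rad_{\overline{W}}(B_i)$ and the corresponding perpendicular $\overline{U_i}\subseteq\overline{V}$, observes both are $\phi$-invariant, and runs an induction on $m$: the quotient data $(\overline{U_i},\ \overline{W}/\overline{R_i})$ is again an instance of the same set-up with $m$ replaced by $m-1$, so $\phi$ is a scalar $\lambda_i$ there; comparing scalars on $\bigcap_i\overline{U_i}$ as $i$ varies gives a common $\lambda$.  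You instead translate into the polynomial algebra $R=K[t_1,\ldots,t_n]$ (identifying $\overline{V}\cong R_{m-1}$, $\overline{W}\cong R_m$, with the Lie-pairing coming from multiplication by linear forms) and reduce everything to the single intertwining identity $\ell\cdot S(g)=T(\ell g)$ for all $\ell\in R_1$; the scalar conclusion is then an elementary consequence of unique factorisation, via the observation that $q:=S(t^{\ee})/t^{\ee}$ is independent of $\ee$ along chains of single-variable moves, lies in $R$ because $\gcd(t_1^{m-1},t_2^{m-1})=1$, and is homogeneous of degree zero.  This sidesteps the induction entirely and makes the underlying algebraic reason transparent; it also anticipates the polynomial-algebra viewpoint that the paper itself only brings in later (in the proof of Lemma~\ref{lem:reductive-part}), so it fits the overall architecture well.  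The only thing I would tighten in your write-up is the justification that $Q=\lambda^{-1}\mathrm{id}$: rather than appealing loosely to $\langle\overline{V},\overline{W}\rangle\neq 0$, one can read it off directly, since $T=\mu\,\mathrm{id}$ and $T=(Q^{-1})^{T}$ in the monomial basis give $Q=\mu^{-1}\mathrm{id}$ at once, or alternatively invoke the nondegeneracy of the pairing in its second argument (which is exactly the triviality of $\bigcap_i\overline{R_i}$ that the paper uses).
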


\begin{proof}
  By Lemma~\ref{lem:max-ab-ideal}, there exist matrices $M_1 \in
  \Mat_{\lvert \mathbf{E} \rvert, \lvert \mathbf{E} \rvert}(K)$, $M_2
  \in \Mat_{\lvert \mathbf{F} \rvert, \lvert \mathbf{F} \rvert}(K)$,
  $C \in \Mat_{\lvert \mathbf{E} \rvert, \lvert \mathbf{F}
    \rvert}(K)$, $D_1 \in \Mat_{\lvert \mathbf{E} \rvert, n}(K)$, $D_2
  \in \Mat_{\lvert \mathbf{F} \rvert, n}(K)$ such that, with respect
  to the basis~\eqref{equ:xyz-basis}, the automorphism $\phi$ is
  represented by the block-matrix
  \[
  \begin{pmatrix}
    M_1 & C & D_1 \\
    & M_2 & D_2 \\
    &  & \mathrm{Id}_n
  \end{pmatrix}.
  \]
  We show that there exists $\lambda \in K^\times$ such that $M_1 =
  \lambda \mathrm{Id}_{\lvert \mathbf{E} \rvert}$ and $M_2 =
  \lambda^{-1} \mathrm{Id}_{\lvert \mathbf{F} \rvert}$.

  Working modulo the centre $\mathrm{Z}(L_K)$, we obtain vector spaces
  $\overline{V} = \langle \overline{x_\ee} \mid \ee \in \mathbf{E}
  \rangle$ and $\overline{W} = \langle \overline{y_\ff} \mid \ff \in
  \mathbf{F} \rangle$, equipped with bilinear forms
  \[
  B_1, \ldots, B_n \colon \overline{V} \times \overline{W} \rightarrow
  K
  \]
  such that the induced Lie bracket map satisfies
  $[\overline{v},\overline{w}] = \sum_{j=1}^n
  B_j(\overline{v},\overline{w}) z_j$
  for $\overline{v} \in \overline{V}$ and
  $\overline{w} \in \overline{W}$.

  Since $[\overline{W},\overline{W}] = 0$, we may in the following
  argument `ignore' $C$ and assume that $\phi$ maps each of
  $\overline{V}, \overline{W}$ to itself.  Since $\phi$ restricts to
  the identity on $\mathrm{Z}(L_K)$, we deduce that $\phi$ preserves
  the bilinear forms $B_1, \ldots, B_n$.  Below we show that
  $\phi \vert_{\overline{V}} = \lambda \mathrm{id}$ for some
  $\lambda \in K^\times$.  This implies
  $\phi \vert_{\overline{W}} = \lambda^{-1} \mathrm{id}$, since the
  intersection of the (right) radicals
  \[
  \overline{R_i} = \Rad_{\overline{W}}(B_i) = \langle y_\ff \mid \ff =
  (f_1,\ldots,f_n) \in \mathbf{F} \text{ with } f_i = 0 \rangle
  \]
  for $i \in \{1,\ldots,n\}$ is trivial.

  For $m = 1$ we have $\lvert \mathbf{E} \rvert = 1$ and there is
  nothing further to show.  Now suppose that $m \ge 2$, and fix
  $i \in \{1,\ldots,n\}$. The radical $\overline{R_i}$ is a
  $\phi$-invariant subspace of~$\overline{W}$.  Consequently, also
  \begin{align*}
    \overline{U_i} = \bigcap\nolimits_{j=1}^n \overline{R_i}^{\perp,j}
    & = \{ \overline{v} \in \overline{V} \mid \forall j \in
    \{1,\ldots,n\} :
    B_j(\overline{v},\overline{R_i}) = 0 \} \\
    & = \langle \overline{x_\ee} \mid \ee = (e_1,\ldots,e_n) \in
    \mathbf{E} \text{ with } e_i \geq 1 \rangle
  \end{align*}
  is $\phi$-invariant.

  Put $\overline{V_1} = \overline{U_i}$ and $\overline{W_1} =
  \overline{W}/\overline{R_i}$, equipped with the induced bilinear
  forms
  \[
  \widetilde{B_1}, \ldots, \widetilde{B_n} \colon \overline{V_1} \times
  \overline{W_1} \rightarrow K
  \]
  and an induced automorphism $\widetilde{\phi}$.  By induction on
  $m$, we conclude that there exists $\lambda_i \in K^\times$ such that
  \[
  \phi \vert_{\overline{U_i}} = \widetilde{\phi}
  \vert_{\overline{V_1}} = \lambda_i \mathrm{id}
  \]
  and, though we will not use this, $\widetilde{\phi}
  \vert_{\overline{W_1}} = \lambda_i^{\, -1} \mathrm{id}$, i.e.,
  $\widetilde{\phi} \vert_{\overline{W}} \equiv \lambda_i^{\, -1}
  \mathrm{id}$ modulo $\overline{R_i}$.

  Repeating this argument for different $i \in \{1,\ldots,n\}$ and
  comparing on the intersection of the $\overline{U_i}$, we deduce
  that $\phi \vert_{\overline{V}} = \lambda \mathrm{id}$ for a common
  $\lambda \in K^\times$. 
\end{proof}

\begin{lemma} \label{lem:autom-unipotent} Let $A = \langle y_\ff \mid
  \ff \in \mathbf{F} \rangle + \mathrm{Z}(L_K) \trianglelefteq L_K$, as in
  Lemma~\textup{\ref{lem:max-ab-ideal}}.  Suppose that $\phi \in \Aut(L_K)$
  induces the identity map on $L_K/A$, $A/\mathrm{Z}(L_K)$ and
  $\mathrm{Z}(L_K)$.  Then there exist matrices $C \in \Mat_{\lvert
    \mathbf{E} \rvert, \lvert \mathbf{F} \rvert}(K)$, $D_1 \in
  \Mat_{\lvert \mathbf{E} \rvert, n}(K)$, $D_2 \in \Mat_{\lvert
    \mathbf{F} \rvert, n}(K)$ such that, with respect to the
  basis~\eqref{equ:xyz-basis}, the automorphism $\phi$ is represented
  by the block-matrix
  \begin{equation} \label{equ:block-mat}
    \begin{pmatrix}
      \mathrm{Id}_{\lvert \mathbf{E} \rvert} & C & D_1 \\
      & \mathrm{Id}_{\lvert \mathbf{F} \rvert} & D_2 \\
      & & \mathrm{Id}_n
    \end{pmatrix},
  \end{equation}
  where empty positions represent zeros.  The rows of the matrix $C$
  are naturally indexed by $\ee \in \mathbf{E}$; its columns are
  naturally indexed by $\ff \in \mathbf{F}$.  Writing $C =
  (c_{\ee,\ff})_{(\ee,\ff) \in \mathbf{E} \times \mathbf{F}}$, there
  are parameters $b_\gg$ indexed by the elements of
  \[
  \mathbf{G} = \{ \gg \mid \gg = (g_1, \ldots, g_n) \in \N_0^n
  \text{ with $g_1 + \ldots + g_n = 2m-1$} \}
  \]
  such that $c_{\ee,\ff} = b_{\ee+\ff}$ for all $\ee \in \mathbf{E}$,
  $\ff \in \mathbf{F}$.

  Conversely, given matrices $C = (c_{\ee,\ff})_{(\ee,\ff) \in
    \mathbf{E} \times \mathbf{F}} \in \Mat_{\lvert \mathbf{E} \rvert,
    \lvert \mathbf{F} \rvert}(K)$, $D_1 \in \Mat_{\lvert \mathbf{E}
    \rvert, n}(K)$, $D_2 \in \Mat_{\lvert \mathbf{F} \rvert, n}(K)$
  such that $c_{\ee,\ff} = c_{\ee',\ff'}$ for $\ee,\ee' \in
  \mathbf{E}$ and $\ff,\ff' \in \mathbf{F}$ with $\ee + \ff = \ee' +
  \ff'$ there is an automorphism of $L_K$ that is represented by the
  block matrix~\eqref{equ:block-mat}.
\end{lemma}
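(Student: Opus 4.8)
The plan is to read off the block shape of $\phi$ from the three hypotheses together with the $\Aut(L_K)$-invariance of $A$ furnished by Lemma~\ref{lem:max-ab-ideal}, then to isolate the single family of Lie-bracket constraints that pins down $C$, and finally to convert that family into a connectedness statement on the index set $\mathbf{E}\times\mathbf{F}$. For the block shape: since $\phi(A)=A$ while $\phi$ fixes $\mathrm{Z}(L_K)$ pointwise and acts trivially on $A/\mathrm{Z}(L_K)$, one gets $\phi(z_j)=z_j$ and $\phi(y_\ff)=y_\ff+\sum_j(D_2)_{\ff,j}z_j$; and since $\phi$ acts trivially on $L_K/A$, one gets $\phi(x_\ee)=x_\ee+\sum_\ff c_{\ee,\ff}y_\ff+\sum_j(D_1)_{\ee,j}z_j$ for suitable scalars. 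Expressing these identities in the basis~\eqref{equ:xyz-basis} is exactly the matrix~\eqref{equ:block-mat}; this part is bookkeeping.

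Next I would note that, because $\mathrm{Z}(L_K)$ is central and $A$ is an abelian ideal containing all $y_\ff$ and all $z_j$, a linear map of the form~\eqref{equ:block-mat} respects every defining relation of $L_{m,n}$ automatically, with the sole possible exception of $[x_\ee,x_{\ee'}]=0$. So everything hinges on one computation: using that $[x_\ee,y_\ff]$ equals $z_i$ precisely when $\ff=\ee+\mathbf{1}_i$ (and vanishes otherwise), where $\mathbf{1}_i\in\N_0^n$ is the $i$-th standard unit vector, one finds
\[
[\phi(x_\ee),\phi(x_{\ee'})] = \sum_{i=1}^n\big(c_{\ee',\,\ee+\mathbf{1}_i}-c_{\ee,\,\ee'+\mathbf{1}_i}\big)z_i,
\]
so that, the $z_i$ being linearly independent, $\phi$ is a Lie automorphism if and only if $c_{\ee',\ee+\mathbf{1}_i}=c_{\ee,\ee'+\mathbf{1}_i}$ for all $\ee,\ee'\in\mathbf{E}$ and all $i$. (When $m=1$ one has $\lvert\mathbf{E}\rvert=1$ and there is nothing to do.)

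The crux is to show that these equations force $c_{\ee,\ff}$ to depend only on $\ee+\ff\in\mathbf{G}$, which is precisely the assertion $c_{\ee,\ff}=b_{\ee+\ff}$. Both pairs $(\ee',\ee+\mathbf{1}_i)$ and $(\ee,\ee'+\mathbf{1}_i)$ lie in the fibre of $(\ee,\ff)\mapsto\ee+\ff$ over $\gg=\ee+\ee'+\mathbf{1}_i$; parametrising this fibre by its first coordinate, a single equation identifies the parameter $\ee$ with $\gg-\mathbf{1}_i-\ee$, and composing the equations for two indices $i$ and $j$ identifies $\ee$ with $\ee+\mathbf{1}_i-\mathbf{1}_j$ whenever $\ee_i<\gg_i$ and $\ee_j\ge 1$. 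I would then run a descent on $\lVert\ee-\widetilde{\ee}\rVert_1$: any two fibre elements $\ee\ne\widetilde{\ee}$ have equal coordinate sums, so one may pick $i$ with $\ee_i<\widetilde{\ee}_i$ and $j$ with $\ee_j>\widetilde{\ee}_j$, and the resulting move $\ee\mapsto\ee+\mathbf{1}_i-\mathbf{1}_j$ is admissible, stays in the fibre, and lowers the distance to $\widetilde{\ee}$ by $2$. Hence all of a fibre is identified, and we may set $b_\gg$ equal to the common value of $c_{\ee,\ff}$ over $\ee+\ff=\gg$. This connectedness argument is the only step that is not pure bookkeeping, and the place where I expect to have to be careful about which index vectors actually lie in $\mathbf{E}$ or $\mathbf{F}$.

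The converse then drops out of the same computation: if $C=(c_{\ee,\ff})$ satisfies $c_{\ee,\ff}=c_{\ee',\ff'}$ whenever $\ee+\ff=\ee'+\ff'$, and $D_1,D_2$ are arbitrary, then~\eqref{equ:block-mat} defines an invertible linear map (block upper unitriangular, determinant~$1$); since $\ee'+(\ee+\mathbf{1}_i)=\ee+(\ee'+\mathbf{1}_i)$ the displayed bracket vanishes, and all remaining defining brackets hold for free by the observation above, so the map is an automorphism of $L_K$.
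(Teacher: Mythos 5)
Your proposal is correct and follows essentially the same route as the paper's proof: read off the block shape from the hypotheses, reduce to the single constraint coming from $[x_\ee,x_{\ee'}]=0$ (yielding $c_{\ee',\,\ee+\mathbf{1}_i}=c_{\ee,\,\ee'+\mathbf{1}_i}$, which is the paper's condition~\eqref{equ:1-step} rewritten), and then run a descent in the fibre of $(\ee,\ff)\mapsto\ee+\ff$ by picking coordinates $i,j$ with $e_i<\widetilde e_i$ and $e_j>\widetilde e_j$ and making the length-two move $\ee\mapsto\ee+\mathbf{1}_i-\mathbf{1}_j$, which lowers $\lVert\ee-\widetilde\ee\rVert_1$ by~$2$. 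This is exactly the chain argument the paper uses; your only additions are the explicit formula for $[\phi(x_\ee),\phi(x_{\ee'})]$ and the remark that all other defining relations hold automatically, both of which the paper leaves implicit.
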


\begin{proof}
  We indicate how to prove the first part; the second part is a
  routine verification.  The assumptions on $\phi$ show directly that
  $\phi$ is represented by a block matrix of the
  form~\eqref{equ:block-mat}.  In particular, this means that, for
  each $\ee \in \mathbf{E}$,
  \[
  x_\ee \phi \equiv x_\ee + \sum\nolimits_{\ff \in \mathbf{F}} c_{\ee,\ff}
  y_\ff  \pmod{Z(L_K)}.
  \]
  Since $\phi$ is an automorphism, we conclude that, for $\ee,\ee' \in
  \mathbf{E}$,
  \[
  \big[x_\ee + \sum\nolimits_{\ff \in \mathbf{F}} c_{\ee,\ff} \,
  y_\ff, x_{\ee'} + \sum\nolimits_{\ff' \in \mathbf{F}} c_{\ee',\ff'}
  \, y_{\ff'} \big] = [x_\ee\phi,x_{\ee'}\phi] = [x_\ee,x_{\ee'}]\phi
  = 0,
  \]
  and consequently $c_{\ee,\ff} = c_{\ee',\ff'}$, for $\ff,\ff' \in
  \mathbf{F}$, whenever
  \begin{equation} \label{equ:1-step} \ff - \ee' = \ff' - \ee =
    (\underbrace{0,\ldots,0}_{i-1 \text{
        entries}},1,\underbrace{0,\ldots,0}_{n-i \text{ entries}})
    \quad \text{for some $i \in \{1,\ldots,n\}$}.
  \end{equation}
  We claim that this implies -- and is thus equivalent to -- the
  condition
  \[
  c_{\ee,\ff} = c_{\ee',\ff'} \quad \text{whenever $\ee+\ff =
    \ee'+\ff'$.}
  \]
  Indeed, given any $\ee = (e_1,\ldots,e_n), \ee'=(e_1',\ldots,e_n')
  \in \mathbf{E}$ and $\ff,\ff' \in \mathbf{F}$ with $\ee+\ff =
  \ee'+\ff'$ one can find a chain
  \[
  (\ee,\ff) = (\ee_1,\ff_1), (\ee_2,\ff_2), \ldots,
  (\ee_\delta,\ff_\delta), (\ee_{\delta+1},\ff_{\delta+1}) =
  (\ee',\ff')
  \]
  of length $\delta = \sum_{i=1}^n \lvert e_i - e_i' \rvert$ such that
  the condition~\eqref{equ:1-step} is satisfied for any two
  consecutive terms $(\ee_t,\ff_t), (\ee_{t+1},\ff_{t+1})$ of the
  chain so that
  \[
  c_{\ee,\ff} = c_{\ee_1,\ff_1} = c_{\ee_2,\ff_2} = \ldots =
  c_{\ee_{\delta},\ff_{\delta}} =
  c_{\ee_{\delta+1},\ff_{\delta+1}} = c_{\ee',\ff'}. 
  \]   
  To produce such a chain, we observe that $\delta \equiv_2 0$ so that
  we can move inductively in steps of two.  Suppose we have reached
  $(\ee_t,\ff_t)$, not yet equal to $(\ee',\ff')$.  Writing
  $\ee_t = (e_{t,1},\ldots,e_{t,n})$, we locate
  $j,k \in \{1, \ldots, n\}$ such that $e_{t,j} < e_j'$ and
  $e_{t,k} > e_k'$. Using \eqref{equ:1-step} at $j$ and $k$ in place
  of~$i$, we reach via some intermediate $(\ee_{t+1},\ff_{t+1})$ the
  next term $(\ee_{t+2},\ff_{t+2})$, where
  $\ee_{t+2} = (e_{t+2,1},\ldots,e_{t+2,n})$ satisfies:
  $e_{t+2,j} =e_{t,j}+1$, $e_{t+2,k}=e_{t,k}-1$ and
  $e_{t+2,i} = e_{t,i}$ for all remaining indices~$i$.  As
  $\ee_t + \ff_t$ remains constant throughout, at the end of the final
  step $\ee_{\delta+1} = \ee'$ automatically implies
  $\ff_{\delta+1} = \ff'$.
\end{proof}

\begin{lemma} \label{lem:reductive-part} Suppose that the
  characteristic of $K$ is $0$.  Then every
  $\psi \in \GL(\mathrm{Z}(L_K))$ extends to an automorphism
  $\phi \in \Aut(L_K)$ so that $\phi \vert_{\mathrm{Z}(L_K)} = \psi$.
  Moreover, $\phi$ can be chosen so that the spaces
  $\langle x_\ee \mid \ee \in \mathbf{E} \rangle$ and
  $\langle y_\ff \mid \ff \in \mathbf{F} \rangle$ are $\phi$-invariant
  and the action of $\phi$ on them corresponds to induced actions of
  $\psi$ on symmetric powers of the dual space of $\mathrm{Z}(L_K)$
  and on symmetric powers of $\mathrm{Z}(L_K)$, respectively.
\end{lemma}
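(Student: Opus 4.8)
The plan is to realise the desired extension $\phi$ through the functoriality of symmetric powers, so that the ``symmetric power'' clause in the statement comes out essentially by construction. First I would set up the relevant identification. Write $U=\mathrm{Z}(L_K)$ with its ordered basis $z_1,\dots,z_n$, let $z_1^*,\dots,z_n^*$ be the dual basis of $U^*$, and for $\gg=(g_1,\dots,g_n)\in\N_0^n$ put $\gg!=g_1!\cdots g_n!$, $z^\gg=z_1^{g_1}\cdots z_n^{g_n}\in\mathrm{Sym}^{\lvert\gg\rvert}(U)$ and $(z^*)^\gg=(z_1^*)^{g_1}\cdots(z_n^*)^{g_n}\in\mathrm{Sym}^{\lvert\gg\rvert}(U^*)$; write $\mathbf 1_i$ for the $i$-th standard basis vector of $\N_0^n$. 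Since $\mathrm{char}(K)=0$, each $\ff!$ with $\ff\in\mathbf F$ is a unit in $K$, so the assignments $x_\ee\mapsto(z^*)^\ee$ and $y_\ff\mapsto\ff!^{-1}z^\ff$ identify $\langle x_\ee\mid\ee\in\mathbf E\rangle$ with $\mathrm{Sym}^{m-1}(U^*)$ and $\langle y_\ff\mid\ff\in\mathbf F\rangle$ with $\mathrm{Sym}^m(U)$, the dimensions matching because $\lvert\mathbf E\rvert=\binom{m+n-2}{n-1}=\dim_K\mathrm{Sym}^{m-1}(U^*)$ and $\lvert\mathbf F\rvert=\binom{m+n-1}{n-1}=\dim_K\mathrm{Sym}^m(U)$. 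Let $c\colon\mathrm{Sym}^{m-1}(U^*)\times\mathrm{Sym}^m(U)\to U$ be the canonical contraction, which sends $\big((z^*)^\ee,z^\ff\big)$ to $\partial^\ee(z^\ff)$ where $\partial^\ee=\prod_j(\partial/\partial z_j)^{e_j}$; this is the standard natural pairing of constant-coefficient differential operators against polynomials, and is therefore $\GL(U)$-equivariant, with $\GL(U)$ acting on $U$, contragrediently on $U^*$, and functorially on all symmetric powers.

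The key step is to check that, under the above identification, the Lie bracket $\langle x_\ee\rangle\times\langle y_\ff\rangle\to U$ of $L_K$ becomes exactly $c$. Here $\partial^\ee(z^\ff)=0$ unless $f_j\ge e_j$ for every $j$, hence -- since $\lvert\ff\rvert=\lvert\ee\rvert+1$ -- unless $\ff=\ee+\mathbf 1_i$ for some $i$, and in that case a short coordinate computation gives $\partial^\ee(z^\ff)=\ff!\,z_i$. Consequently $c\big((z^*)^\ee,\ff!^{-1}z^\ff\big)$ equals $z_i$ when $\ff-\ee=\mathbf 1_i$ and $0$ otherwise, which is precisely the rule defining $[x_\ee,y_\ff]$ in Definition~\ref{def-Lmn}.

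With this in hand, given $\psi\in\GL(U)=\GL(\mathrm{Z}(L_K))$ I would define $\phi\in\GL(L_K)$ to act as $\psi$ on $\mathrm{Z}(L_K)$, as the automorphism of $\mathrm{Sym}^m(U)$ induced by $\psi$ on $\langle y_\ff\rangle$, and as the automorphism of $\mathrm{Sym}^{m-1}(U^*)$ induced by the contragredient of $\psi$ on $\langle x_\ee\rangle$. Each block is invertible, so $\phi$ is a linear automorphism of $L_K$ with $\phi\vert_{\mathrm{Z}(L_K)}=\psi$, and the three subspaces $\langle x_\ee\rangle$, $\langle y_\ff\rangle$, $\mathrm{Z}(L_K)$ are $\phi$-invariant with the asserted actions; it then remains to verify that $\phi$ respects the bracket. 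All brackets within or between these subspaces vanish in $L_K$ except the mixed one $\langle x_\ee\rangle\times\langle y_\ff\rangle\to U$, and the vanishing ones are visibly preserved; for the mixed bracket the $\GL(U)$-equivariance of $c$ gives
\[
[x_\ee\phi,\,y_\ff\phi]=\big(c((z^*)^\ee,\ff!^{-1}z^\ff)\big)\psi=\big([x_\ee,y_\ff]\big)\phi\qquad\text{for }\ee\in\mathbf E,\ \ff\in\mathbf F,
\]
so $\phi\in\Aut(L_K)$, completing the proof.

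The main obstacle I anticipate is not conceptual but lies in the second paragraph: identifying the structure constants of $L_K$ with the contraction $c$ after the rescaling $y_\ff\mapsto\ff!^{-1}z^\ff$, and in particular confirming that a single scalar per basis vector (here $1$ for each $x_\ee$ and $\ff!$ for each $y_\ff$) works simultaneously for all the brackets $[x_\ee,y_\ff]$ -- this rests on the identity $\prod_{j}e_j!\cdot(e_i+1)=\ff!$ for $\ff=\ee+\mathbf 1_i$, and it is exactly the place where the hypothesis $\mathrm{char}(K)=0$ is genuinely used. The remaining ingredients -- the $\GL(U)$-equivariance of contraction and the functoriality of symmetric powers -- are standard, as is the coordinate computation $\partial^\ee(z^\ff)=\ff!\,z_i$. (One could instead extend $\psi$ on a generating set of $\GL_n$, but that route obscures the symmetric-power structure and would still require a well-definedness argument amounting to the same content.)
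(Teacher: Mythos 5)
Your proposal is correct and takes essentially the same approach as the paper: both identify $\langle x_\ee\rangle$ with $\mathrm{Sym}^{m-1}$ of the dual of the centre and $\langle y_\ff\rangle$ with $\mathrm{Sym}^m$ of the centre (with the same factorial rescaling, where $\mathrm{char}(K)=0$ is used), realise the Lie bracket as the canonical $\GL$-equivariant contraction, and then let $\phi$ act on each graded piece by the induced symmetric-power action of $\psi$. The paper packages the contraction as a trilinear form $\Phi\colon S^{m-1}V\times V\times S^m(V^\vee)\to K$ with $Z(L_K)$ viewed abstractly as a dual space $V^\vee$, whereas you use the equivalent bilinear differential-operator pairing $c\colon\mathrm{Sym}^{m-1}(U^*)\times\mathrm{Sym}^m(U)\to U$, but this is only a cosmetic difference in presentation.
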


\begin{proof}
  Regard $\mathrm{Z}(L_K) = \langle z_1, \ldots, z_n \rangle$ as the dual space
  $V^\vee$ of an $n$-dimensional $K$-vector space $V = \langle \xi_1,
  \ldots, \xi_n \rangle$ such that, for $i,j \in \{1,\ldots,n\}$,
  \[
  (\xi_i)z_j = 
  \begin{cases}
    1 & \text{if $i=j$,} \\
    0 & \text{otherwise.}
  \end{cases}
  \]
  The $(m-1)$th symmetric power $S^{m-1}V$ of $V$ can be constructed
  as $V^{\otimes \, m-1}/\mathrm{Sym}(m-1)$, where
  $(\nu_1 \otimes \ldots \otimes \nu_{m-1})^\sigma = \nu_{1
    \sigma^{-1}} \otimes \ldots \otimes \nu_{(m-1)\sigma^{-1}}$
  for $\sigma \in \mathrm{Sym}(m-1)$.  Writing
  $\nu_1 \cdots \nu_{m-1}$ for the element of $S^{m-1}V$
  represented by $\nu_1 \otimes \ldots \otimes \nu_{m-1}$, one
  easily sees that a basis of $S^{m-1}V$ is given by the vectors
  $\xi_1^{\, e_1} \cdots \xi_n^{\, e_n}$,
  $\ee = (e_1,\ldots,e_n) \in \mathbf{E}$.  In a similar way, we form
  the $m$th symmetric power $S^m(V^\vee)$ with basis
  $z_1^{\, f_1} \cdots z_n^{\, f_n}$, $\ff \in \mathbf{F}$.

  One can interpret $S^m(V^\vee)$ as $(S^m V)^\vee$, and this suggests
  a change of basis.  Indeed, the natural map
  \[
  V^{\otimes \, m} \times (V^\vee)^{\otimes \, m} \rightarrow K, \quad
  \text{induced by $(\nu_1 \otimes \ldots \otimes \nu_m)(a_1 \otimes
    \ldots \otimes a_m) = \prod_{i=1}^m (\nu_i) a_i$}
  \]
   gives rise to an
  isomorphism $(V^\vee)^{\otimes \, m} \cong (V^{\otimes \,
    m})^\vee$.  Now $\xi_1^{\, f_1'} \cdots \xi_n^{\, f_n'} \in S^mV$ is
  represented, for instance, by
  \[
  \underbrace{\xi_1 \otimes \ldots \otimes \xi_1}_{f_1' \text{ factors}} \otimes
  \ldots \otimes \underbrace{\xi_n \otimes \ldots \otimes
    \xi_n}_{f_n' \text{ factors}} \in V^{\otimes \, m}.
  \] 
  Moreover, as $K$ has characteristic $0$, there is an embedding
  \[
  S^m(V^\vee) \hookrightarrow (V^\vee)^{\otimes \, m}, \quad z_1^{\,
    f_1} \cdots z_n^{\, f_n} \mapsto \sum_{\sigma \in \mathrm{Sym}(m)}
  (\underbrace{z_1 \otimes \ldots \otimes z_1}_{f_1 \text{ factors}}
  \otimes \ldots \otimes \underbrace{z_n \otimes \ldots \otimes
    z_n}_{f_n \text{ factors}})^\sigma;
  \] 
  the image of this embedding consists of the elements of
  $(V^\vee)^{\otimes \, m}$ that are fixed under the action of
  $\mathrm{Sym}(m)$.  Hence we can evaluate (the image of) $z_1^{\,
    f_1} \cdots z_n^{\, f_n} \in S^m(V^\vee)$ (under this embedding)
  at $\xi_1^{\, f_1'} \cdots \xi_n^{\, f_n'} \in S^m V$ to obtain
  \[
  (\xi_1^{\, f_1'} \cdots \xi_n^{\, f_n'}) \, (z_1^{\, f_1} \cdots z_n^{\,
    f_n}) =
  \begin{cases}
    \prod\nolimits_{i=1}^n (f_i !) & \text{if $(f_1',\ldots,f_n') =
      (f_1,\ldots,f_n)$,} \\
    0 & \text{otherwise}
  \end{cases}
  \]
  Therefore the basis $\xi_1^{\, f_1'} \cdots \xi_n^{\, f_n'}$,
  indexed by $\mathbf{f}' = (f_1',\ldots,f_n') \in \mathbf{F}$, of
  $S^mV$ gives rise to the dual basis $(\prod_{i=1}^n (f_i !))^{-1} \,
  z_1^{\, f_1} \cdots z_n^{\, f_n}$, indexed by $\mathbf{f} =
  (f_1,\ldots,f_n) \in \mathbf{F}$, of $S^m(V^\vee)$, and this is the
  basis that we prefer to work with in the present context.

  We define a multilinear map
  \[
  \Phi \colon S^{m-1}V \times \underbrace{V}_{=S^1V} \times
  S^m(V^\vee) \rightarrow K
  \]
  by setting
  \begin{align*}
    \Phi \left( \xi_1^{\, e_1} \cdots \xi_n^{\, e_n}, \xi_k, \big(
      \prod\nolimits_{i=1}^n (f_i !) \big)^{-1} \, z_1^{\, f_1} \cdots
      z_n^{\, f_n} \right) & = \big( \xi_1^{\, \widetilde{e_1}} \cdots
    \xi_n^{\, \widetilde{e_n}} \big) \left(
      \big(\prod\nolimits_{i=1}^n
      (f_i !) \big)^{-1} \, z_1^{\, f_1} \cdots z_n^{\, f_n} \right) \\
    & =
    \begin{cases}
      1 & \text{if $(\widetilde{e_1},\ldots,\widetilde{e_n}) =
        (f_1,\ldots,f_n)$,} \\
      0 & \text{otherwise},
    \end{cases}
  \end{align*}
  where $\widetilde{e_k} = e_k+1$ and $\widetilde{e_j} = e_j$ for $j
  \not = k$, and extending linearly in each of the three arguments.
  This map $\Phi$ connects to the original Lie algebra $L_K$ as follows.
  For $\ee = (e_1,\ldots,e_n) \in \mathbf{E}$ and $\ff =
  (f_1,\ldots,f_n) \in \mathbf{F}$,
  \begin{multline*}
    [x_\ee,y_\ff] = \sum\nolimits_{k=1}^n \Phi \left( \xi_1^{\, e_1}
      \cdots \xi_n^{\, e_n}, \xi_k, \big( \prod\nolimits_{i=1}^n (f_i
      !) \big)^{-1} \, z_1^{\, f_1} \cdots z_n^{\, f_n} \right) \cdot
    z_k \\
    = \Phi \left( \xi_1^{\, e_1} \cdots \xi_n^{\, e_n},
      \boldsymbol{\cdot} , \big( \prod\nolimits_{i=1}^n (f_i !)
      \big)^{-1} \, z_1^{\, f_1} \cdots z_n^{\, f_n} \right) \in
    V^\vee = \mathrm{Z}(L_K).
  \end{multline*}

  The fact that $\Phi$ is multilinear means that we can monitor
  changes to a set of structure constants for $L_K$ due to changes to
  any of the three parts of the basis
  \[
  x_\ee, \, \ee \in \mathbf{E}, \quad y_\ff, \, \ff \in \mathbf{F},
  \quad z_j, \, j \in \{1, \ldots, n\}.
  \]
  In particular, given any $\psi \in \GL(V^\vee)$, there are the
  natural induced actions of $\psi$ on
  \begin{itemize}
  \item[$\circ$] $V^\vee$ via the natural representation,
  \item[$\circ$] $V$ via the contragredient representation
    (given in matrices by inverse transpose) which we denote by
    $\psi^*$,
  \item[$\circ$] $S^m(V^\vee)$, sending $(\prod_{j=1}^n (f_j!))^{-1}
    z_1^{\, f_1} \cdots z_n^{\, f_n}$ to $(\prod_{j=1}^n (f_j!))^{-1}
    (z_1 \psi)^{\, f_1} \cdots (z_n \psi)^{\, f_n}$,
  \item[$\circ$] $S^{m-1} V$, sending $\xi_1^{\, e_1} \cdots \xi_n^{\,
      e_n}$ to $(\xi_1 \psi^*)^{\, e_1} \cdots (\xi_n \psi^*)^{\,
      e_n}$.
  \end{itemize}
  In this set-up we clearly obtain
  \begin{multline*}
    \Phi \left( \xi_1^{\, e_1} \cdots \xi_n^{\, e_n}, \xi_k, \big(
      \prod\nolimits_{i=1}^n (f_i !) \big)^{-1} \, z_1^{\, f_1} \cdots
      z_n^{\, f_n} \right) \\
    = \Phi \left( (\xi_1 \psi^*)^{\, e_1} \cdots (\xi_n \psi^*)^{\, e_n},
      (\xi_k \psi^*), \big( \prod\nolimits_{i=1}^n (f_i !) \big)^{-1} \,
      (z_1 \psi)^{\, f_1} \cdots (z_n \psi)^{\, f_n} \right),
  \end{multline*}
  because the bases $\xi_1 \psi^*, \ldots, \xi_n \psi^*$ of $V$ and
  $z_1 \psi, \ldots, z_n \psi$ of $V^\vee$ are dual to one another by
  construction.  Thus the given $\psi \in \GL(V^\vee) = \GL(\mathrm{Z}(L_K))$
  extends to an automorphism $\phi \in \Aut(L_K)$ so that $\phi
  \vert_{\mathrm{Z}(L_K)} = \psi$; moreover, $\phi$ acts on the spaces $\langle
  x_\ee \mid \ee \in \mathbf{E} \rangle$ and $\langle y_\ff \mid \ff
  \in \mathbf{F} \rangle$ as $\psi$ does on the corresponding
  symmetric powers.
\end{proof}

From Lemmata~\ref{lem:autom-trival-on-centre},
\ref{lem:autom-unipotent} and \ref{lem:reductive-part}, including the
proof of Lemma~\ref{lem:reductive-part}, we deduce the following
result.

\begin{theorem} \label{thm:alg-autom-grp} Let $m,n \in \N$ with $n
  \geq 2$, and let $L = L_{m,n}$ be the class-$2$ nilpotent $\Z$-Lie
  lattice introduced in Definition~\textup{\ref{def-Lmn}}, so that
  $\mathrm{rank}_\Z L = d = r_1 + r_2 + r_3$, where
  \[
  r_1 = \lvert \mathbf{E} \rvert = \binom{m+n-2}{n-1}, \qquad r_2 =
  \lvert \mathbf{F} \rvert = \binom{m+n-1}{n-1}, \qquad r_3 = n.
  \]
  Then the algebraic automorphism group $\mathsf{Aut}(L)$, regarded as
  an affine $\Z$-group scheme $\aG \subseteq \GL_d$ with respect to
  the basis~\eqref{equ:xyz-basis}, has the following structure: $\aG$
  decomposes over $\Z$ as a semidirect product $\aG = \aN \rtimes \aH$
  of its unipotent radical $\aN$ and a reductive complement~$\aH$,
  both defined over $\Z$, such that

  \smallskip

  $\circ \phantom{x}$ $\aN$ consists of elements of the form
  \[
    \begin{pmatrix}
      \mathrm{Id}_{\lvert \mathbf{E} \rvert} & C & D_1 \\
      & \mathrm{Id}_{\lvert \mathbf{F} \rvert} & D_2 \\
      & & \mathrm{Id}_n
    \end{pmatrix},
  \]
  where $C \in \Mat_{\lvert \mathbf{E} \rvert, \lvert \mathbf{F}
    \rvert}$, $D_1 \in \Mat_{\lvert \mathbf{E} \rvert, n}$, $D_2 \in
  \Mat_{\lvert \mathbf{F} \rvert, n}$ subject to the following
  conditions.  The rows of $C$ are naturally indexed by $\ee \in
  \mathbf{E}$, its columns are naturally indexed by $\ff \in
  \mathbf{F}$.  Writing $C = (c_{\ee,\ff})_{(\ee,\ff) \in \mathbf{E}
    \times \mathbf{F}}$, there are parameters $b_\gg$ indexed by the
  elements of
  \[
  \mathbf{G} = \{ \gg \mid \gg = (g_1, \ldots, g_n) \in \N_0^n \text{
    with $g_1 + \ldots + g_n = 2m-1$} \}
  \]
  such that $c_{\ee,\ff} = b_{\ee+\ff}$ for all $(\ee,\ff) \in
  \mathbf{E} \times \mathbf{F}$.

  \smallskip

  $\circ \phantom{x}$ $\aH \cong_\Z \GL_n \times \GL_1$ consists of
  elements of the form
  \[
  \begin{pmatrix} \lambda\rho_1(A) &  & \\
    & \lambda^{-1} \rho_2(A) & \\
    & & A
  \end{pmatrix},
  \]
  where $A \in \GL_n$, $\lambda \in \aGm$ and $\rho_1, \rho_2$ are
  suitable symmetric power representations of $\GL_n$ in dimensions
  $r_1$ and $r_2$.
\end{theorem}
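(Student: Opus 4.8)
The plan is to assemble the preceding lemmata along the automorphism-invariant filtration $\mathrm{Z}(L) \subseteq A \subseteq L$, where $A$ is the abelian ideal of Lemma~\ref{lem:max-ab-ideal} and $\mathrm{Z}(L) = [L,L]$. Working first over a field $K$ of characteristic zero, I would record the induced action on the graded pieces $\overline{V} = L_K/A$, $\overline{W} = A/\mathrm{Z}(L_K)$ and $\mathrm{Z}(L_K)$, equipped with the bases coming from the images of the $x_\ee$, the $y_\ff$ and the $z_j$; this yields a homomorphism of algebraic groups
\[
\pi \colon \Aut(L_K) \longrightarrow \GL(\overline{V}) \times \GL(\overline{W}) \times \GL(\mathrm{Z}(L_K)) \cong \GL_{r_1} \times \GL_{r_2} \times \GL_n .
\]
By Lemma~\ref{lem:autom-unipotent}, the kernel of $\pi$ consists precisely of the block matrices~\eqref{equ:block-mat} whose $C$-block satisfies the $\mathbf{G}$-indexed relations, that is, of the group $\aN$ of the statement, which is connected and unipotent.

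To pin down the image, I would combine the remaining two lemmata. Composing $\pi$ with projection onto the last factor gives $\Aut(L_K) \to \GL(\mathrm{Z}(L_K)) \cong \GL_n$, and this map is surjective: the construction in the proof of Lemma~\ref{lem:reductive-part} produces, for each $A \in \GL_n$, an automorphism $\phi_A$ acting block-diagonally as $\diag(\rho_1(A), \rho_2(A), A)$, where $\rho_1, \rho_2$ are the symmetric power representations of dimensions $r_1$ and $r_2$ carried by $\langle x_\ee \mid \ee \in \mathbf{E}\rangle$ and $\langle y_\ff \mid \ff \in \mathbf{F}\rangle$, and $A \mapsto \phi_A$ is a homomorphism since symmetric powers are multiplicative. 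By Lemma~\ref{lem:autom-trival-on-centre}, the kernel of $\Aut(L_K) \to \GL(\mathrm{Z}(L_K))$ is the product of $\aN(K)$ with the central one-parameter subgroup $\{\diag(\lambda\mathrm{Id}_{r_1}, \lambda^{-1}\mathrm{Id}_{r_2}, \mathrm{Id}_n) \mid \lambda \in K^\times\}$. Writing an arbitrary $\phi$ as $\phi = (\phi\phi_A^{-1})\phi_A$, with $A$ its action on $\mathrm{Z}(L_K)$, I would read off $\pi(\phi) = \diag(\lambda\rho_1(A), \lambda^{-1}\rho_2(A), A)$ for a unique $\lambda$. Thus the image of $\pi$ is exactly the group $\aH \cong \GL_n \times \aGm$ of the statement, the maps $A \mapsto \phi_A$ and $\lambda \mapsto \diag(\lambda\mathrm{Id}_{r_1}, \lambda^{-1}\mathrm{Id}_{r_2}, \mathrm{Id}_n)$ combine to a homomorphic section of $\pi$, and hence $\Aut(L_K) = \aN(K) \rtimes \aH(K)$; being connected, normal and unipotent with reductive quotient, $\aN$ is the unipotent radical and $\aH$ a reductive complement.

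It then remains to descend to $\Z$. All the objects above are defined over $\Z$: the conditions cutting $\aN$ out of $\GL_d$ (the block shape and $c_{\ee,\ff} = c_{\ee',\ff'}$ whenever $\ee+\ff = \ee'+\ff'$) are linear over $\Z$, so $\aN$ is a $\Z$-closed subgroup scheme, isomorphic to affine $\Z$-space; the representations $\rho_1, \rho_2$ are, in the basis~\eqref{equ:xyz-basis}, given by polynomials with integer coefficients in the entries of $A$ and $A^{-1}$ (the factorial denominators appearing in the proof of Lemma~\ref{lem:reductive-part} cancel against multinomial coefficients), so $(A,\lambda) \mapsto \diag(\lambda\rho_1(A), \lambda^{-1}\rho_2(A), A)$ is a closed immersion $\GL_n \times \aGm \hookrightarrow \GL_d$ over $\Z$, and the product map realises $\aN \rtimes \aH$ as a $\Z$-group scheme inside $\GL_d$. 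The structure-constant identities certifying that these matrices are Lie automorphisms are integral polynomial identities valid over $\Q$, hence over every ring, so $\aN \rtimes \aH \subseteq \aG$ over $\Z$; since in addition Lemmata~\ref{lem:autom-unipotent} and~\ref{lem:autom-trival-on-centre} hold over an arbitrary field, one obtains $\aG(K) = (\aN\rtimes\aH)(K)$ for every field $K$, which together with smoothness (hence flatness) of $\aN \rtimes \aH$ over $\Z$ and a reducedness/flatness check on $\aG$ --- so that each of $\aN \rtimes \aH$ and $\aG$ is the schematic closure in $\GL_{d,\Z}$ of its generic fibre --- yields $\aG = \aN \rtimes \aH$. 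I expect this scheme-theoretic descent to be the main obstacle: one must ensure $\mathsf{Aut}(L)$ has no extra geometric points or nilpotents in small positive characteristic, which comes down either to verifying flatness of $\mathsf{Aut}(L)$ over $\Z$ or to re-running the proofs of Lemmata~\ref{lem:autom-trival-on-centre}--\ref{lem:reductive-part} over an arbitrary base while tracking the factorial normalisations; a secondary delicate point is the computation of the image of $\pi$, which uses the full strength of Lemma~\ref{lem:reductive-part} --- that \emph{every} automorphism of the centre lifts --- not merely its restriction to a maximal torus.
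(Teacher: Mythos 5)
Your proof is correct and follows the paper's approach exactly: it assembles Lemmata~\ref{lem:autom-trival-on-centre}, \ref{lem:autom-unipotent} and \ref{lem:reductive-part} along the filtration $\mathrm{Z}(L_K) \subseteq A \subseteq L_K$, identifies the kernel of the induced map to $\GL_{r_1}\times\GL_{r_2}\times\GL_n$ with $\aN$ and its image with $\aH$, and uses the explicit section supplied by the proof of Lemma~\ref{lem:reductive-part}. The paper's own ``proof'' is a single sentence citing those three lemmata, so your write-up --- including the careful flagging of the descent from $\Q$ to $\Z$ and the integrality of the symmetric-power matrices, which the paper leaves implicit --- is a faithful and somewhat more thorough rendering of the intended argument.
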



\section{Evaluating $p$-adic integrals via the Bruhat decomposition}
\label{sec:evaluate-p-adic-integrals}

In~\cite[Sections~2]{dSLu96}, du Sautoy and Lubotzky provide a general
treatment of $\fp$-adic integrals of the
form~\eqref{equ:zeta-alg-group}, subject to several simplifying
assumptions.  We now give an outline of the procedure and indicate how
it applies to the specific affine group scheme supplied by
Theorem~\ref{thm:alg-autom-grp}.  It will turn out that for our
application all necessary assumptions are automatically satisfied.

Let $\aG \subseteq \GL_d$ be an affine group scheme over the ring of
integers $\fo$ of a number field~$\fk$.  Decompose the connected
component of the identity as a semidirect product $\aG^\circ = \aN
\rtimes \aH$ of the unipotent radical $\aN$ and a reductive
complement~$\aH$.  For a finite prime $\fp$ let $\fk_{\fp}$ denote the
completion at $\fp$, and let $\fo_\fp$ denote the valuation ring
of~$\fk_\fp$.  Fix a uniformising parameter $\pi$ for $\fo_\fp$, and
let $q$ denote the size of the residue field $\fo_\fp / \pi \fo_\fp$.
Setting $G = \aG(\fk_\fp)$ and $G^+ = G \cap \Mat_d(\fo_\fp)$, we are
interested in the zeta function
\[
\mathcal{Z}_{\aG,\fp}(s) = \int_{G^+} \lvert \det(g) \rvert_\fp^s \,
d\mu_G (g).
\]

The first simplifying assumption is that $G = \aG(\fo_\fp) \,
\aG^\circ(\fk_\fp)$, which allows one to work with the connected group
$\aG^\circ$ rather than~$\aG$.  In our application, the group $\aG$ is
already connected and we continue the discussion under the
assumption~$\aG = \aG^\circ$.  We write
\[
N = \aN(\fk_\fp) \quad \text{and} \quad H = \aH(\fk_\fp).
\]
Assume further that $G \subseteq \GL_d(\fk_\fp)$ is in block form,
where $H$ is block diagonal and $N$ is block upper unitriangular in
the following sense.  There is a partition $d = r_1 + \ldots + r_c$
such that, setting $s_i = r_1 + \ldots + r_{i-1}$ for $i \in
\{1,\ldots,c+1\}$,
\begin{enumerate}
\item[$\circ$] the vector space $V = \fk_\fp^d$ on which $G$ acts from
  the right decomposes into a direct sum of $H$-stable subspaces $U_i
  = \{ (0,\ldots,0) \} \times \fk_\fp^{r_i} \times \{ (0,\ldots,0)
  \}$, where the vectors $(0,\ldots,0)$ have $s_i$, respectively
  $d-s_{i+1}$, entries;
\item[$\circ$] setting $V_i = U_i \oplus \ldots \oplus U_c$, each
  $V_i$ is $N$-stable and $N$ acts trivially on $V_i/V_{i+1}$.
\end{enumerate}
In our application, Theorem~~\ref{thm:alg-autom-grp} provides such a
block decomposition with $c=3$.  For each $i \in \{2,\ldots,c+1\}$ let
$N_{i-1} = N \cap \ker(\psi'_i)$, where $\psi'_i \colon G \rightarrow
\Aut(V/V_i)$ denotes the natural action.  Let $\psi_i \colon G/N_{i-1}
\rightarrow \Aut(V/V_i)$ denote the induced map, and define
\[
(G/N_{i-1})^+ = \psi_i^{-1} \big(\psi_i(G/N_{i-1}) \cap
\Mat_{s_i}(\fo_\fp) \big).
\]
Setting $H^+ = H \cap \Mat_d(\fo_\fp)$, we can now state (a slightly
simplified version of) the `lifting condition', which cannot in
general be satisfied by moving to an equivalent representation; see
\cite[p.~6]{Be11}.

\begin{condition}[cf.\
  {\cite[Assumption~2.3]{dSLu96}}]\label{lifting-condition}
  For each $i \in \{3,\ldots,c \}$ and every $g_0 N_{i-1} \in
  (NH^+/N_{i-1})^+$ there exists $g \in G^+$ such that $g_0 N_{i-1} = g
  N_{i-1}$.
\end{condition}

In our application, $c = 3$ is so small that the `lifting condition'
holds for trivial reasons: according to the description given in
Theorem~\ref{thm:alg-autom-grp}, the relevant blocks $D_1$ and $D_2$
of any element of $N$ can just be replaced by zero blocks to achieve a
lifting.

For $i \in \{2,\ldots,c\}$ there is a natural embedding of
$N_{i-1}/N_i \hookrightarrow (V_i/V_{i+1})^{s_i}$ via the action of
$N_{i-1}/N_i$ on $V/V_{i+1}$, recorded on the natural basis.  The
action of $H$ on $V_i/V_{i+1}$ induces, for each $h \in H$, a map
\[
\tau_i(h) \colon N_{i-1}/N_i \hookrightarrow (V_i/V_{i+1})^{s_i}.
\]
Define $\theta_{i-1} \colon H \rightarrow \mathbb{R}_{\ge 0}$ by setting
\[
\theta_{i-1}(h) = \mu_{N_{i-1}/N_i} \big( \{n N_i \in N_{i-1}/N_i \mid (n
N_i) \tau_i(h) \in \Mat_{s_i,r_i}(\fo_\fp)\} \big),
\]  
where $\mu_{N_{i-1}/N_i}$ denotes the right Haar measure on
$N_{i-1}/N_i$, normalised such that the set $\psi_{i+1}^{-1}
(\psi_{i+1}(N_{i-1}/N_i) \cap \Mat_{s_{i+1}}(\fo_\fp))$ has
measure~$1$.  In our application, $c=3$ so that there are two
functions, $\theta_1$ and $\theta_2$, that should be computed.

Write $\mu_G$, respectively $\mu_N$ and $\mu_H$, for the right Haar
measure on~$G$, respectively $N$ and $H$, normalised such that
$\mu_G(\aG(\fo_\fp)) = 1$, respectively $\mu_N( \aN(\fo_\fp)) = 1$ and
$\mu_H( \aH(\fo_\fp)) = 1$.  From $G = N \rtimes H$ one deduces that
$\mu_G = \prod_{i=2}^c \mu_{N_{i-1}/N_i} \cdot \mu_H$.  Writing
$\theta(h) = \mu_N (\{ n \in N \mid nh \in G^+ \})$ for $h \in H$, we
can now state the following result of du Sautoy and Lubotzky.

\begin{theorem}[{\cite[Theorem~2.2]{dSLu96}}] \label{thm:dS-Lu-weak-version}
  In the set-up described above, the various assumptions, including
  the `lifting condition', guarantee that
  $\theta(h) = \prod_{i=1}^{c-1} \theta_i(h)$ for $h \in H$, and
  consequently,
  \[
  \mathcal{Z}_{\aG,\fp}(s) = \mathcal{Z}_{\aH,\fp,\theta}(s), \qquad
  \text{where} \quad \mathcal{Z}_{\aH,\fp,\theta}(s) = \int_{H^+}
  \lvert \det(h) \rvert_\fp^s \, \prod_{i=1}^{c-1} \theta_i(h) \,
  d\mu_H(h).
  \]
\end{theorem}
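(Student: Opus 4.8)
The plan is to integrate out the unipotent variable, reducing the integral over $G^+$ to one over $H^+$, and then to factor the resulting weight $\theta$ through the layers of the filtration $V=V_1\supseteq V_2\supseteq\cdots\supseteq V_{c+1}=0$ as $\prod_{i=1}^{c-1}\theta_i$.

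\textbf{Step 1 (reduction to $H$).} Each $g\in G$ factors uniquely as $g=nh$ with $n\in N$, $h\in H$. As $n$ is block upper unitriangular and $h$ block diagonal, $g$ stabilises the filtration, its diagonal blocks are those of $h$, and $\det g=\det h$; hence $\lvert\det g\rvert_{\fp}=\lvert\det h\rvert_{\fp}$ and $g\in\Mat_d(\fo_\fp)$ forces $h\in H^+$, so $\theta(h)=\mu_N(\{n\in N\mid nh\in G^+\})$ vanishes outside $H^+$. Using the decomposition $\mu_G=\prod_{i=2}^{c}\mu_{N_{i-1}/N_i}\cdot\mu_H$ recalled above and Fubini's theorem applied to $\mathbf{1}_{G^+}$,
\[
\mathcal{Z}_{\aG,\fp}(s)=\int_{H}\lvert\det h\rvert_{\fp}^{\,s}\Big(\int_{N}\mathbf{1}_{G^+}(nh)\,d\mu_N(n)\Big)d\mu_H(h)=\int_{H^+}\lvert\det h\rvert_{\fp}^{\,s}\,\theta(h)\,d\mu_H(h).
\]

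\textbf{Step 2 (factoring $\theta$).} Fix $h\in H^+$ with diagonal blocks $h_1,\dots,h_c$. For $n\in N$ the $(k,i)$-block of $nh$ with $k<i$ equals $(n)_{k,i}h_i$, so $nh\in\Mat_d(\fo_\fp)$ amounts to: for each $i\in\{2,\dots,c\}$, every block $(n)_{k,i}h_i$ with $k<i$ is integral. Interpreting the $i$-th block column of $n$ as an element of $(V_i/V_{i+1})^{s_i}$, the $i$-th condition reads $(nN_i)\tau_i(h)\in\Mat_{s_i,r_i}(\fo_\fp)$, a condition on the image of $n$ in $N_{i-1}/N_i$. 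One now argues by induction on $c$, peeling off the bottom layer $N_{c-1}/N_c=N_{c-1}$: the conditions coming from $i<c$ are governed, via the induced action on $V/V_c$, by the inductive hypothesis applied to the truncated filtration, while the condition for $i=c$ is the new $\tau_c(h)$-constraint. The subtle point --- the heart of the theorem --- is that $\aN$ is only a subvariety of the full block-unitriangular group, so a priori the admissible set of $n$ need not split as a direct product compatibly with the layer-wise integrality conditions; this is exactly what the `Lifting Condition' prevents, by ensuring that every coset $g_0N_{c-1}$ whose induced action on $V/V_c$ is integral has a representative in $G^+$, so that partial integral elements assembled from the top layers genuinely extend and the bottom-layer constraint may be imposed independently. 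Combining this with $\mu_N=\prod_{i=2}^{c}\mu_{N_{i-1}/N_i}$ and the chosen normalisations, Fubini yields
\[
\theta(h)=\prod_{i=2}^{c}\mu_{N_{i-1}/N_i}\big(\{\,nN_i\mid (nN_i)\tau_i(h)\in\Mat_{s_i,r_i}(\fo_\fp)\,\}\big)=\prod_{i=2}^{c}\theta_{i-1}(h)=\prod_{i=1}^{c-1}\theta_i(h).
\]

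\textbf{Step 3 (conclusion).} Substituting the factorisation of Step~2 into the formula of Step~1 gives
\[
\mathcal{Z}_{\aG,\fp}(s)=\int_{H^+}\lvert\det h\rvert_{\fp}^{\,s}\prod_{i=1}^{c-1}\theta_i(h)\,d\mu_H(h)=\mathcal{Z}_{\aH,\fp,\theta}(s),
\]
as claimed. I expect the main obstacle to be the decoupling carried out in Step~2: showing that the defining equations of $\aN$ interact with the filtration so that the layer-wise integrality conditions genuinely factor, which is precisely the content of the `Lifting Condition'. In the present application $c=3$ and $\aN$ is simple enough --- the blocks $D_1,D_2$ of any element of $\aN$ may be replaced by zero without leaving $\aN$ --- that the lifting is immediate and the verification of Step~2 reduces to a short direct computation.
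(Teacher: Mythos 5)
The paper does not prove this statement: it is Theorem~2.2 of du~Sautoy and Lubotzky, cited as a black box, and the surrounding text merely recalls the set-up and verifies that the hypotheses (in particular the `lifting condition') hold for the application at hand. So there is no in-paper proof to compare yours against; what follows assesses your sketch on its own.

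Your overall architecture is right: factor $g=nh$ uniquely, observe that $\lvert\det g\rvert_\fp=\lvert\det h\rvert_\fp$ and that integrality forces $h\in H^+$, use the stated decomposition of $\mu_G$ with Fubini to reduce to the $H$-integral weighted by $\theta$, and then try to factor $\theta$ layer by layer. You also correctly identify that the lifting condition is exactly what rescues the layer-wise factorisation from failing because $\aN$ is only a subvariety of the full block-unitriangular group, and you correctly reproduce the paper's remark that in the application ($c=3$, $D_1,D_2$ replaceable by zero) the condition is trivially met.

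There is, however, an imprecision in Step~2 that papers over the actual difficulty. You write that the integrality condition on the $i$-th block column of $nh$ is ``a condition on the image of $n$ in $N_{i-1}/N_i$.'' That is not so for a general $n\in N$: the blocks $(n)_{k,i}$ with $k<i$ are determined by the class $nN_i\in N/N_i$, not by any element of $N_{i-1}/N_i$ (the latter only makes sense for $n\in N_{i-1}$). Because $N/N_i$ is a nontrivial extension of the successive layers $N_{j-1}/N_j$ rather than a direct product, the conditions at different levels do not decouple for free; one first chooses a class $nN_{i-1}$ satisfying the integrality constraints on $V/V_i$, and the content of the lifting condition is that this class admits a representative for which the whole product $nh$ is integral, so that the fibre measure over each admissible class at level $i-1$ is exactly $\theta_{i-1}(h)$ rather than something smaller. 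Your own parenthetical remark about ``partial integral elements genuinely extending'' shows you understand this, but as written the earlier sentence misstates which quotient of $N$ the $i$-th condition lives on, and an inductive proof built literally on that sentence would not go through. If you rewrite Step~2 to (a) record the $i$-th condition as a condition on $nN_i$, (b) invoke the lifting condition to pass from integrality of $nN_{i-1}$ on $V/V_i$ to existence of a globally integral representative, and (c) then use the invariance of $\mu_{N_{i-1}/N_i}$ to show each fibre has measure $\theta_{i-1}(h)$, the argument matches the structure of the proof in \cite{dSLu96}.
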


Next we recall the machinery developed by Igusa~\cite{Ig89} as well as
du Sautoy and Lubotzky~\cite{dSLu96} for utilising a $\fp$-adic Bruhat
decomposition in order to compute integrals over reductive $\fp$-adic
groups; a useful reference for practical purposes is~\cite{Be11}.

We assume that the reductive $\fp$-adic group $H$ is split over
$\fk_\fp$ and fix a maximal $\fk_p$-split torus $T = \aT(\fk_\fp)$
in~$H$.  In our application, $\aH \cong_\Z \GL_n \times \GL_1$ is
clearly split and we take the subgroup of diagonal matrices for~$\aT$.
Elements of $\aT$ act by conjugation on minimal closed unipotent
subgroups of $\aH$ which correspond to elements of the root system
$\Phi$ associated to~$\aH$.  Under this action, the (finite) Weyl
group corresponds to $W = \mathrm{N}_\aH(\aT)/\aT$; in our
application, $W \cong \mathrm{Sym}(n)$.  We suppress here
some necessary requirements of `good reduction modulo $\fp$' in
identifying $\aT$ with $\aGm^{\, \dim \aT}$ and identifying each root
subgroup with the additive group~$\aGa$; these technical requirements
hold trivially in our application.  Roots of $\aH$ relative to $\aT$
are identified with elements of $\Hom(\aT, \aGm)$ via the conjugation
action of $\mathrm{N}_\aH(\aT)/\aT$ on root subgroups.  We choose a
set of simple roots $\alpha_1,\ldots, \alpha_l$ which define the
positive roots~$\Phi^+$; in our application, $l = n-1$.  We denote
by $\Xi = \Hom(\aGm, \aT)$ the set of cocharacters of~$\aT$, and we
recall the natural pairing between characters and cocharacters,
\[
\Hom(\aT, \aGm) \times \Hom(\aGm, \aT) \rightarrow \Z, \quad (\beta,
\xi) \mapsto \langle \beta, \xi \rangle,
\]
defined by $\beta(\xi(\tau)) = \tau^{\langle \beta, \xi \rangle}$ for
all $\tau\in \aGm$.  

We refer to~\cite[p.~75]{dSLu96} for a description of Iwahori
subgroup~$\mcalB$ of the reductive $\fp$-adic group~$H$.  Recalling
that $\pi$ denotes a uniformising parameter for $\fo_\fp$, we state
the $\fp$-adic Bruhat decompositions of Iwahori and
Matsumoto~\cite{IwMa65}:
\[
H = \coprod_{\substack{w\in W \\ \xi \in \Xi}} \mcalB \, w \xi(\pi) \,
\mcalB \qquad \text{and} \qquad \aH(\fo_\fp) = \coprod_{w\in W} \mcalB
w \mcalB
\] 
where elements of $W$ are suitably interpreted as representatives
in~$\mathrm{N}_H(T)$.  One defines $\Xi^+ = \{\xi \in \Xi \mid
\xi(\pi) \in \aH(\fo_\fp) \}$ and
\begin{multline*}
  w \Xi_w^+ = \big\{ \xi \in \Xi^+ \mid \alpha_i(\xi(\pi)) \in \fo_\fp
  \text{ for all } i \in \{1, \ldots, l \}, \\ \text{ and }
  \alpha_i(\xi(\pi)) \in \pi \fo_\fp \text{ whenever } \alpha_i\in
  w(\Phi^-) \big\},
\end{multline*}
where $\Phi^-$ denotes the set of negative roots.  As in \cite[Section
5.2]{Be11}, it will turn out to be convenient to judiciously choose a
basis for $\Hom(\aT, \aGm)$ consisting of simple roots
$\alpha_1,\ldots,\alpha_l$ and inverses $\omega_i^{\, -1}$ of the
dominant weights for the contragredient representations of $H$ acting
on~$V_i/V_{i+1}$.  Using the dual basis for $\Xi = \Hom(\aGm,\aT)$, we
thereby obtain an explicit description of the sets~$w\Xi_w^+$.

Utilising symmetries in the affine Weyl group and the fact that the
functions $\lvert \det \rvert_\fp$, $\theta$ are constant on double
cosets of the Iwahori subgroup $\mcalB \leq \aH(\fo_\fp)$, the first
author proved the following proposition, generalising results of du Sautoy
and Lubotzky~\cite{dSLu96} and Igusa~\cite{Ig89}.

\begin{proposition}[{\cite[Proposition~4.2]{Be11}}] 
  \label{proposition:formula-integral}
  Suppose that $\aH$ has a $\fk$-split torus~$\aT$.  Under good
  reduction assumptions, which are satisfied for almost all primes
  $\fp$,
  \[
  \mathcal{Z}_{\aH,\fp,\theta}(s) = \sum_{w \in W} q^{-\ell(w)} \sum_{\xi \in
    w \Xi_w^+} q^{\langle \prod_{\beta \in \Phi^+} \beta, \xi \rangle}
  \, \lvert \det(\xi(\pi)) \rvert_\fp^s \, \theta(\xi(\pi))
  \]
  where $\ell$ denotes the standard length function on the Weyl
  group~$W$.
\end{proposition}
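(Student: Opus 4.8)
The plan is to evaluate $\mathcal{Z}_{\aH,\fp,\theta}(s) = \int_{H^+} \lvert \det(h)\rvert_\fp^s \prod_{i=1}^{c-1} \theta_i(h)\, d\mu_H(h)$ by cutting $H^+$ into double cosets on which the integrand is constant, parametrising these by the Iwahori--Matsumoto description of the double coset space, and summing the Haar volumes weighted by the constant value of the integrand. Write $\theta = \prod_{i=1}^{c-1}\theta_i$ throughout; for the present statement one needs only that $\theta$ is invariant under left and right multiplication by $\aH(\fo_\fp)$, its finer structure being irrelevant here.

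First I would check that $h \mapsto \lvert\det(h)\rvert_\fp^s\,\theta(h)$ is constant on each $\mcalB$--$\aH(\fo_\fp)$ double coset in $H$. Since $\mcalB \subseteq \aH(\fo_\fp) \subseteq \GL_d(\fo_\fp)$, multiplication by elements of $\aH(\fo_\fp)$ on either side alters $\det$ only by a unit, so $\lvert\det(\cdot)\rvert_\fp$ is such an invariant. For $\theta$ one uses that $G^+ = G\cap\Mat_d(\fo_\fp)$ is stable under multiplication by $\aG(\fo_\fp)$ on either side and that conjugation by an element of $\aH(\fo_\fp)$ is an $\fo_\fp$-defined automorphism of $\aN$, hence preserves $\aN(\fo_\fp)$ and thus the normalised measure $\mu_N$; the substitution $n \mapsto bnb^{-1}$ in the defining integral for $\theta$ then gives $\theta(bhu) = \theta(h)$ for $b,u \in \aH(\fo_\fp)$.

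Next I would enumerate the double cosets filling out $H^+$. Combining the Iwahori--Matsumoto decompositions $\aH(\fo_\fp) = \coprod_{w\in W}\mcalB w\mcalB$ and $H = \coprod_{w\in W,\,\xi\in\Xi}\mcalB w\xi(\pi)\mcalB$, the claim to prove is that $H^+$ is the disjoint union of the $\mcalB$--$\aH(\fo_\fp)$ double cosets attached to the pairs $(w,\xi)$ with $\xi \in w\Xi_w^+$. Membership in $H^+ = H\cap\Mat_d(\fo_\fp)$ is an integrality condition on the action of $w\xi(\pi)$ on the block flag $V = V_1 \supseteq V_2 \supseteq \cdots \supseteq V_{c+1} = 0$ provided by the block structure of Theorem~\ref{thm:alg-autom-grp}. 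Writing a general character of $\aT$ in the basis formed by the simple roots $\alpha_1,\dots,\alpha_{n-1}$ together with the inverses of the dominant weights of the contragredient $\aH$-representations on the quotients $V_i/V_{i+1}$, this condition turns, coordinate by coordinate, into the requirement that $\xi(\pi)$ be dominant --- $\alpha_j(\xi(\pi)) \in \fo_\fp$ for all $j$ --- together with the strict condition $\alpha_j(\xi(\pi)) \in \pi\fo_\fp$ whenever $\alpha_j \in w(\Phi^-)$, which is exactly what forces each cell to be counted for a unique $w$ and makes the union exhaust $H^+$. This recovers the definition of $w\Xi_w^+$, and it is here that the good-reduction hypotheses --- identifying $\aT$ with $\aGm^{\,\dim\aT}$ and each root subgroup with $\aGa$ --- enter, holding for all but finitely many $\fp$.

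Finally I would compute the volume of each piece and assemble. On the double coset attached to $(w,\xi)$ the integrand equals $\lvert\det(\xi(\pi))\rvert_\fp^s\,\theta(\xi(\pi))$, since a monomial representative of $w$ lies in $\aH(\fo_\fp)$ and the invariance from the second paragraph applies. For the volume I would use the identity $\mu_H(\mcalB\tilde{w}\mcalB) = q^{\ell(\tilde{w})}\mu_H(\mcalB)$ for $\tilde{w}$ in the extended affine Weyl group $W\ltimes\Xi$, the length formula $\ell(w\,t_\xi) = \big\langle \sum_{\beta\in\Phi^+}\beta,\,\xi\big\rangle - \ell(w)$ valid on the dominant region isolated above, and the normalisation $1 = \mu_H(\aH(\fo_\fp)) = \big(\sum_{w'\in W} q^{\ell(w')}\big)\mu_H(\mcalB)$; collecting contributions over the Bruhat cells of $\aH(\fo_\fp)$ is precisely what cancels $\mu_H(\mcalB)$ and leaves the weight $q^{-\ell(w)}\,q^{\langle \sum_{\beta\in\Phi^+}\beta,\xi\rangle}$. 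Summing weight times integrand value over all admissible $(w,\xi)$ yields the asserted formula. I expect the main obstacle to be the content of the third paragraph: pinning down exactly which double cosets $\mcalB w\xi(\pi)\mcalB$ lie in $H^+$, showing that, via the adapted basis of $\Hom(\aT,\aGm)$, they are parametrised without overlap or omission by $\coprod_{w\in W}\{w\}\times w\Xi_w^+$, and reconciling the strictness condition on the walls of the dominant cone with the affine-length bookkeeping that governs the volume computation; the remaining technical requirements cause no difficulty for all but finitely many primes.
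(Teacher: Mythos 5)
This proposition is not proved in the present paper; it is imported verbatim from \cite[Proposition~4.2]{Be11}, and the paper only gestures at the method: ``utilising symmetries in the affine Weyl group and the fact that the functions $\lvert\det\rvert_\fp$, $\theta$ are constant on double cosets of the Iwahori subgroup.'' So the comparison is against that hint and the argument in \cite{Be11}, \cite{dSLu96}.

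Your overall plan (decompose the domain, use constancy of the integrand on appropriate cells, sum Haar volumes) is the right one, and the invariance check in your second paragraph is correct as stated. But there is a genuine gap in the final volume assembly. You choose to work with $\mcalB$--$\aH(\fo_\fp)$ double cosets and then decompose each such coset into $\mcalB$--$\mcalB$ cells to ``cancel $\mu_H(\mcalB)$.'' For that cancellation to produce the bare weight $q^{-\ell(w)+\langle\beta_0,\xi\rangle}$ you tacitly need two things you never verify: (i) that for each admissible $(w,\xi)$ and each $w'\in W$ the lengths are additive, $\ell(w t_\xi w') = \ell(w t_\xi)+\ell(w')$, so that the Iwahori-cell volume is $q^{\ell(wt_\xi)+\ell(w')}\mu_H(\mcalB)$; and (ii) that the cells $\mcalB w\xi(\pi)w'\mcalB$, $w'\in W$, are pairwise distinct and exactly tile $\mcalB w\xi(\pi)\aH(\fo_\fp)$. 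Since $wt_\xi w' = ww'\,t_{w'^{-1}\xi}$ and $w'^{-1}\xi$ is generally not dominant, neither of these is automatic; they depend on $\xi$ lying deeply enough in the cone cut out by $w\Xi_w^+$ relative to the whole of $W$, and on the walls where $\langle\alpha_i,\xi\rangle=0$ one expects mergers of cells and failures of additivity. This is precisely the piece of affine Weyl bookkeeping that the phrase ``symmetries in the affine Weyl group'' in the paper refers to, and it is the part of the argument that produces the double sum $\sum_w q^{-\ell(w)}\sum_{\xi\in w\Xi_w^+}$ as a reparametrisation (for non-regular $\xi$ only the $w$ whose descent set avoids the walls of $\xi$ contribute, yielding the factor $W(q^{-1})/W_\xi(q^{-1})$). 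You identify the parametrisation of cells inside $H^+$ as the main obstacle, which is fair, but the length-additivity/disjointness issue is an independent obstacle you present as routine when it is not.

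A smaller mismatch: the paper's hint and the underlying proof in \cite{Be11}, \cite{dSLu96} work directly with $\mcalB$--$\mcalB$ double cosets and then exploit Weyl symmetry, rather than grouping first into $\mcalB$--$\aH(\fo_\fp)$ double cosets. Your grouping is plausible but changes the flavour of the argument, and the claim that those $\mcalB$--$\aH(\fo_\fp)$ cosets in $H^+$ are in bijection with $\{(w,\xi)\mid \xi\in w\Xi_w^+\}$ needs a separate justification (e.g.\ that for regular dominant $\xi$ the cosets $\mcalB w\xi(\pi)\aH(\fo_\fp)$, $w\in W$, are pairwise distinct).
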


In our application, we have, in fact, good reduction for all primes.

\begin{remark} \label{rem:funct-equa-half-way} Although we have not
  yet calculated the pro-isomorphic zeta functions of
  $\fo_\fp \otimes_\Z L_{m,n}$, we are already able to deduce that the
  former will satisfy functional equations. We do so by invoking
  \cite[Theorem~1.1]{Be11}. Indeed, one can define a faithful
  representation $\rho \colon \GL_n\times \GL_1\to \aH$ given by the
  description of the algebraic automorphism groups in
  Theorem~\ref{thm:alg-autom-grp}.  This representation preserves
  integrality and preserves the Haar measure.  The conditions for
  Theorem~1.1 in \cite{Be11} are that the group $\aH$ should split
  over $\fk$, that the `lifting condition' should hold and that the
  number $r$ of irreducible components of the representation should
  equal the dimension $d$ of a maximal central torus of
  $\GL_n\times \GL_1$.  The first two conditions hold.  The third
  condition does not hold as we have $r=3$, $d=2$.  However, since the
  third block corresponds to the action on the centre and every
  element of the centre is obtained as a commutator of suitable
  generators, it is possible to express the dominant weight of the
  contragredient representation for the last block $A$ as a positive
  linear combination of weights occurring in the first two blocks
  $\lambda \rho_1(A), \lambda^{-1} \rho_2(A)$ (and hence as a positive
  linear combination of the dominant weights for the first two blocks,
  together with simple roots in the root system associated to $\aH$
  relative to a choice of maximal torus). In this way, all the
  requirements used to prove a functional equation in Section 5.2
  of~\cite{Be11} are fulfilled, and the argument there implies the
  existence of functional equations in our situation.
\end{remark}


\section{Computation of the local zeta
  functions} \label{sec:explicit-computation}

In order to establish Theorem~\ref{theorem:main} it remains to carry
out the detailed computations, following the framework described in
Section~\ref{sec:evaluate-p-adic-integrals}.  Let $m,n \in \N$ with
$n \geq 2$, and put $d = r_1 + r_2 + r_3$, where
\[
r_1 = \lvert \mathbf{E} \rvert = \binom{m+n-2}{n-1}, \qquad r_2 =
\lvert \mathbf{F} \rvert = \binom{m+n-1}{n-1}, \qquad r_3 = n.
\]
We consider the affine group scheme $\aG = \aN \rtimes \aH \subseteq
\GL_d$ described in Theorem~\ref{thm:alg-autom-grp}.  Putting $\aH_0 =
\GL_n \times \GL_1$, the group $\aH$ is the image of the faithful
representation
\begin{equation} \label{equ:elements-H} \rho \colon \aH_0
  \hookrightarrow \GL_d, \qquad (A,\lambda) \mapsto
  \begin{pmatrix} \lambda\rho_1(A) &  & \\
    & \lambda^{-1} \rho_2(A) & \\
    & & A
  \end{pmatrix},
\end{equation}
where $\rho_1, \rho_2$ are the symmetric power representations of
$\GL_n$ described in the proof of Lemma~\ref{lem:reductive-part}.  We
denote by $\aT_0$ the maximal torus of $\aH_0$ consisting of diagonal
matrices.  Via this choice, the root system $\Phi$ of $\aH_0$ has
simple roots $\alpha_1, \ldots, \alpha_{n-1}$ given by
\[
\alpha_i(t) = a_i/a_{i+1} \qquad \text{for $1 \leq i \leq n-1$ and $t
  = (\diag(a_1,\ldots,a_n),\lambda) \in \aT_0$.}
\] 
The corresponding Weyl group is $W \cong \mathrm{Sym }(n)$.

The representation $\rho$ in~\eqref{equ:elements-H} has three
irreducible components, of dimensions $r_1$, $r_2$ and~$r_3$
respectively.  The inverses of the dominant weights for their
contragredient representations take the following values at
$t = (\diag(a_1,\ldots,a_n),\lambda) \in \aT_0$:
\[
\omega_1^{\, -1}(t) = \lambda a_1^{\, -(m-1)}, \qquad \omega_2^{\,
  -1}(t) = \lambda^{-1} a_n^{\, m}, \qquad \omega_3^{\, -1}(t) = a_n.
\]
In particular, we observe that $\omega_3^{\, -1} = \omega_1^{\, -1}
\omega_2^{\, -1} (\alpha_1 \alpha_2 \cdots \alpha_{n-1})^{m-1}$.
Furthermore, the identities
\[
a_i = (\omega_3^{\, -1} \alpha_{n-1} \alpha_{n-2} \cdots \alpha_i)(t),
\quad \text{for $1 \leq i \leq n$,} \qquad \text{and} \qquad \lambda =
\omega_1^{-1}(t) a_1^{\, m-1}
\]
show that
\[
\alpha_1, \ldots, \alpha_{n-1} \qquad \text{and} \qquad \alpha_0
\coloneqq \omega_1^{\, -1}, \quad \alpha_n \coloneqq \omega_2^{\,
  -1}
\]
form a basis for $\Hom(\aT_0,\aGm)$.  We define $\xi_0, \ldots,
\xi_n \in \Hom(\aGm,\aT_0)$ to be the dual basis.  This implies,
for $\tau \in \aGm$,
\begin{align*}
  \xi_i(\tau) & = \big( \diag(\underbrace{\tau^m,\ldots,\tau^m}_{i
    \text{ entries}},\underbrace{\tau^{m-1},\ldots,\tau^{m-1}}_{n-i
    \text{
      entries}}), \tau^{(m-1)m} \big), \quad \text{for $1 \leq i \leq n-1$,} \\
  \xi_0(\tau) & = \big(\diag(\tau,\ldots,\tau),\tau^m \big), \qquad
  \text{and} \qquad \xi_n(\tau) =
  \big(\diag(\tau,\ldots,\tau),\tau^{m-1} \big).
\end{align*}

As before, consider a number field~$\fk$.  For a finite prime $\fp$,
let $\fk_{\fp}$ denote the completion at $\fp$, and let $\fo_\fp$
denote the valuation ring of~$\fk_\fp$.  Fix a uniformising parameter
$\pi$ for $\fo_\fp$, and let $q$ denote the size of the residue field
$\fo_\fp / \pi \fo_\fp$.  We put
\[
H_0 = \aH_0(\fk_\fp) \qquad \text{and} \qquad T_0 = \aT_0(\fk_\fp).
\]

Writing $\xi = \prod_{i=0}^n \xi_i^{\, e_i}$, we obtain
\begin{equation*} 
  \xi(\pi) = (A,\lambda) = \big( \diag
  (a_1, \ldots, a_n),\lambda \big) \in T_0, 
\end{equation*}
where
\begin{equation} \label{equ:long-formula}
  \begin{split}
    a_i & = \pi^{(m-1) \left( \sum_{l=1}^{i-1} e_l \right) + m
      \left( \sum_{l=i}^{n-1} e_l \right) + (e_0 +
      e_n)}, \quad \text{for $1 \leq i \leq n$,} \\
    \lambda & = \pi^{m(m-1) \left( \sum_{l=1}^{n-1} e_l \right)
      +me_0+(m-1)e_n}.
  \end{split}
\end{equation}
We observe that, for non-negative exponents $e_i$, $i \in
\{1,\ldots,n-1\}$, the $\fp$-adic absolute values of the first $n$
diagonal entries of $\xi(\pi)$ are increasing: $\lvert a_1 \rvert_\fp
\leq \ldots \leq \lvert a_n \rvert_\fp$.  Furthermore, we obtain
\begin{align*}
  \det \big( \rho(\xi(\pi)) \big) & = \lambda^{r_1} \det ( \rho_1(A))
  \cdot \lambda^{-r_2} \det( \rho_2(A) ) \cdot \det(A) \\
  & = \lambda^{r_1} \left( \det(A) \right)^{- (m-1)r_1/n} \cdot
  \lambda^{- r_2}  \left( \det(A) \right)^{mr_2/n} \cdot \det(A) \\
  & = \lambda^{- \binom{m+n-2}{n-2}} \left( \det(A) \right)^{1 +
    \binom{m+n-2}{n-1}},
\end{align*}
where
\[
\det(A) = \prod\nolimits_{i=1}^n a_i = \pi^{n(m-1) \left(
    \sum_{l=1}^{n-1} e_l \right) + \left( \sum_{l = 1}^{n-1}
      l e_l \right) + n(e_0 + e_n)}.
\]
Altogether we obtain
\begin{multline*}
  \det \big( \rho(\xi(\pi)) \big) = \pi^{-\binom{m+n-2}{n-2} \left[
      m(m-1) \left( \sum_{l = 1}^{n-1} e_l \right) + me_0 +
      (m-1)e_n \right]} \\
  \cdot \pi^{\left( 1 + \binom{m+n-2}{n-1} \right) \left[ n(m-1)
      \left( \sum_{l=1}^{n-1} e_l \right) + \left( \sum_{l =
          1}^{n-1} l e_l \right) + n(e_0 + e_n) \right]},
\end{multline*}
where the coefficient of $e_i$ for $1 \leq i \leq n-1$ in the exponent
is
\begin{align}
  & \text{coeff.\ of $e_i$:} & & -m (m-1) \tbinom{m+n-2}{m} +
                                 \left( 1 + \tbinom{m+n-2}{m-1}
                                 \right) \big( (m-1)n+i \big)
                                 \label{equ:coeff-i}%
                                 \intertext{and the coefficients of
                                 $e_0$ and $e_n$ are}%
  & \text{coeff.\ of $e_0$:} & & \quad n \left( 1 + \tbinom{m+n-2}{m-1}  
                                 \right) - m \tbinom{m+n-2}{m} =
                                 \tbinom{m+n-2}{m-1} + n,
                                 \label{equ:coeff-n} \\
  & \text{coeff.\ of $e_n$:} & & \quad n \left( 1 +
                                 \tbinom{m+n-2}{m-1} \right) -
                                 (m-1) \tbinom{m+n-2}{m} =
                                 \tbinom{m+n-1}{m} + n. \label{equ:coeff-n+1}
\end{align}

Next we work out the term $\theta(\rho(\xi(\pi)))$ which appears in
Proposition~\ref{proposition:formula-integral}, using
$\theta_1,\theta_2$.  As we already noticed,
by~\eqref{equ:long-formula}, it is enough for our purposes to consider
elements
\begin{equation} \label{equ-t-A-lambda} t = (A,\lambda) = \big(
  \diag(a_1,\ldots,a_n), \lambda \big) \in T_0 \qquad \text{with
    $\lvert a_1 \rvert_\fp \leq \ldots \leq \lvert a_n \rvert_\fp$.}
\end{equation}
Using this restriction and writing
$v_\fp(\cdot) = - \log_q \lvert \cdot \rvert_\fp$, we obtain from the
description of the unipotent radical in
Theorem~\ref{thm:alg-autom-grp},
\begin{equation} \label{equ:log-theta-1} \log_q (\theta_1(\rho(t))) =
  \sum_{k=1}^n \binom{m+k-2}{k-1} \sum_{\mathbf{f} \in \mathbf{F}(k)}
  v_\fp \left( \lambda^{-1} \prod\nolimits_{i=k}^n a_i^{\, f_i}
  \right),
\end{equation}
where
\begin{equation*}
  \mathbf{F}(k) = \{ \ff \mid \ff = (f_1,\ldots,f_n) \in \mathbf{F}
  \text{ with } f_1 = \ldots = f_{k-1} = 0, f_k \geq 1 \}, \quad
  \text{for $1 \leq k \leq n$}.
\end{equation*}
This formula can be justified as follows.  Referring to the notation
in Theorem~\ref{thm:alg-autom-grp}, the entries of the matrix $C =
(b_{\ee + \ff})_{(\ee,\ff) \in \mathbf{E} \times \mathbf{F}}$ are
indexed by the elements of
\[
\mathbf{G} = \{ \gg \mid \gg = (g_1, \ldots, g_n) \in \N_0^n \text{
  with $g_1 + \ldots + g_n = 2m-1$} \}.
\]
For $\gg = (g_1,\ldots,g_n) \in \mathbf{G}$, the value $b_\gg$ occurs
in the column labeled by $\ff = (f_1,\ldots,f_n) \in \mathbf{F}$ if
and only if $\ff \leq \gg$, i.e., $f_1 \leq g_1, \ldots, f_n \leq
g_n$.  The $(\ff,\ff)$-entry of the diagonal matrix $\lambda^{-1}
\rho_2(A)$ is equal to $\lambda^{-1} \prod\nolimits_{i=1}^n a_i^{\,
  f_i}$.  This yields the condition
\[
v_\fp(b_\gg) \geq \max \left\{ - v_\fp \left( \lambda^{-1}
    \prod\nolimits_{i=1}^n a_i^{\, f_i} \right) \mid \ff \in
  \mathbf{F} \text{ with } \ff \leq \gg \right\}.
\]
We observe that the maximum on the right-hand side is attained for
$\ff \leq \gg$ such that $\ff$ is minimal with respect to the
lexicographical (non-cyclical) ordering on $\mathbf{F}$; this follows
from the restriction~\eqref{equ-t-A-lambda}.  We arrange the summation
over $\mathbf{F} = \mathbf{F}(1) \sqcup \ldots \sqcup \mathbf{F}(n)$
rather than $\mathbf{G}$.  For $1 \leq k \leq n$ and
$\mathbf{f} \in \mathbf{F}(k)$,
\[
\# \Big\{ \gg \in \mathbf{G} \mid \ff \leq \gg \,\text{ and }\, \neg \,
\exists \ff' \in \mathbf{F}(k) \cup \ldots \cup
\mathbf{F}(n): \ff' \leq \gg \,\land\, f_k'<f_k \Big\}
\]
is equal to the number of ways of distributing $m-1$ increments among
the first $k$ coordinates, giving $\binom{m-1+k-1}{k-1} =
\binom{m+k-2}{k-1}$.  In this way we obtain the summands
\[
\binom{m+k-2}{k-1} \, v_\fp \left( \lambda^{-1} \prod\nolimits_{i=k}^n
  a_i^{f_i} \right), \qquad \text{for each $\mathbf{f} \in \mathbf{F}(k)$},
\]
in the formula~\eqref{equ:log-theta-1}.

Observe that
$\lvert \mathbf{F}(k) \rvert = \binom{m+(n-k+1)-1}{(n-k+1)-1} -
\binom{m + (n-k) -1}{(n-k)-1} = \binom{m+n-k-1}{n-k}$,
for $1 \leq k \leq n$.  Writing
$N(n,m) = \sum_{\ff \in \mathbf{F}} f_1 = \sum_{\ff \in \mathbf{F}(1)}
f_1 = \frac{m}{n} \binom{m+n-1}{n-1} = \binom{m+n-1}{n}$,
we deduce from~\eqref{equ:log-theta-1} that
\begin{align*}
  \log_q(\theta_1(\rho(t))) & = \sum\nolimits_{k=1}^n \tbinom{m+k-2}{k-1}
  \sum\nolimits_{\mathbf{f} \in \mathbf{F}(k)} v_\fp \left(
    \lambda^{-1} \prod\nolimits_{i=k}^n a_i^{f_i} \right) \\
  & = \sum\nolimits_{k=1}^n \tbinom{m+k-2}{k-1} \lvert \mathbf{F}(k)
  \rvert \, v_\fp (\lambda^{-1}) \\
  & \quad + \sum\nolimits_{i=1}^n \left( \sum\nolimits_{k=1}^i
    \tbinom{m+k-2}{k-1} \sum\nolimits_{\mathbf{f} \in
      \mathbf{F}(k)} f_i \right) \, v_\fp(a_i) \\
  & = \sum\nolimits_{k=1}^n \tbinom{m+k-2}{k-1} \tbinom{m+n-k-1}{n-k}
  \, v_\fp (\lambda^{-1}) \\
  & \quad + \sum\nolimits_{i=1}^n \Big( \tbinom{m+i-2}{i-1}
  N(n-i+1,m) \\
  & \quad \quad + \sum\nolimits_{k=1}^{i-1} \tbinom{m+k-2}{k-1} \big(
  N(n-k+1,m) - N(n-k,m) \big) \Big) \, v_\fp(a_i) \\
  & = \tbinom{2m+n-2}{n-1} \, v_\fp (\lambda^{-1}) \\
  & \quad + \sum\nolimits_{i=1}^n \Big( \tbinom{m+i-2}{i-1}
  \tbinom{m+n-i-1}{n-i} + \sum\nolimits_{k=1}^i \tbinom{m+k-2}{k-1}
  \tbinom{m+n-k-1}{n-k+1} \Big) \, v_\fp(a_i).
\end{align*}
Using \eqref{equ:long-formula} and writing
\begin{equation*}
  R(m,n) = \sum\nolimits_{i=1}^n \Big( \tbinom{m+i-2}{i-1}
  \tbinom{m+n-i-1}{n-i} + \sum\nolimits_{k=1}^i \tbinom{m+k-2}{k-1}
  \tbinom{m+n-k-1}{n-k+1} \Big) 
  - m \tbinom{2m+n-2}{n-1},
\end{equation*}
we deduce that
\begin{align*}
  \log_q & \big(\theta_1(\rho(\xi(\pi))) \big) \\
  & = - \tbinom{2m+n-2}{n-1} \left( m(m-1) \left( \sum\nolimits_{l =
        1}^{n-1} e_l \right)  + m e_0 + (m-1) e_n \right) \\
  & \quad + \sum\nolimits_{i=1}^n \left( \tbinom{m+i-2}{i-1}
    \tbinom{m+n-i-1}{n-i} + \sum\nolimits_{k=1}^i \tbinom{m+k-2}{k-1}
    \tbinom{m+n-k-1}{n-k+1} \right) \\
  & \quad \quad \cdot \left( (m-1) \left( \sum\nolimits_{l=1}^{n-1}
      e_l \right) + \left( \sum\nolimits_{l = i}^{n-1} e_l
    \right) + (e_0 + e_n) \right) \\
  & = \sum\nolimits_{j=1}^{n-1} \Big( (m-1) R(m,n) +
  \sum\nolimits_{i=1}^j \Big( \tbinom{m+i-2}{i-1}
  \tbinom{m+n-i-1}{n-i} \\
  & \quad \quad + \sum\nolimits_{k=1}^i \tbinom{m+k-2}{k-1}
  \tbinom{m+n-k-1}{n-k+1} \Big) \Big) e_j 
  + R(m,n) e_0 + \Big( R(m,n) + \tbinom{2m+n-2}{n-1} \Big) e_n.
\end{align*}
We observe that, in fact, 
\begin{align*}
  R(m,n) & = \sum\nolimits_{i=1}^n \tbinom{m+i-2}{i-1}
           \tbinom{m+n-i-1}{n-i} +
           \sum\nolimits_{k=1}^n \sum\nolimits_{i=k}^n
           \tbinom{m+k-2}{k-1} 
           \tbinom{m+n-k-1}{n-k+1}  - m \tbinom{2m+n-2}{n-1}\\
         & = \tbinom{2m+n-2}{n-1} +
           \sum\nolimits_{k=1}^n (n-k+1)
           \tbinom{m+k-2}{k-1} 
           \tbinom{m+n-k-1}{n-k+1}  - m \tbinom{2m+n-2}{n-1}\\
         & = (m-1) \sum\nolimits_{k=1}^n 
           \tbinom{m+k-2}{k-1} 
           \tbinom{m+n-k-1}{n-k}  - (m-1) \tbinom{2m+n-2}{n-1}\\
         & = 0.
\end{align*}
Thus we obtain
\[
\log_q \big(\theta_1(\rho(\xi(\pi))) \big) = \sum\nolimits_{i=1}^{n-1}
C_i(m,n) e_i + \tbinom{2m+n-2}{n-1} e_n,
\]
where
\begin{align*}
  C_i(m,n) & = 
             \sum\nolimits_{j=1}^i \Big(
             \tbinom{m+j-2}{j-1} \tbinom{m+n-j-1}{n-j} + \sum\nolimits_{k=1}^j
             \tbinom{m+k-2}{k-1} \tbinom{m+n-k-1}{n-k+1} \Big) \\
           & = \sum\nolimits_{j=1}^i \tbinom{m+j-2}{m-1} \tbinom{m+n-j-1}{m-1} +
             \sum\nolimits_{k=1}^i (i-k+1) \tbinom{m+k-2}{m-1}
             \tbinom{m+n-k-1}{m-2} \\
           &  = \sum\nolimits_{j=1}^i \Big( 1 + \tfrac{(m-1)(i-j+1)}{n-j+1}
             \Big) \tbinom{m+j-2}{m-1} \tbinom{m+n-j-1}{m-1}.
\end{align*}
Hence we arrive at
\begin{equation} \label{equ:theta-1} \log_q
  \big(\theta_1(\rho(\xi(\pi)))\big) = \sum\nolimits_{i=1}^{n-1} \Big(
  \sum\nolimits_{j=1}^i \Big( 1 + \tfrac{(m-1)(i-j+1)}{n-j+1} \Big)
  \tbinom{m+j-2}{m-1} \tbinom{m+n-j-1}{m-1} \Big) e_i +
  \tbinom{2m+n-2}{n-1} e_n.
\end{equation}


This finishes our computation of~$\theta_1(\rho(\xi(\pi)))$.  It is much
easier to determine~$\theta_2(\rho(\xi(\pi)))$.  Indeed, from the
description of the unipotent radical in
Theorem~\ref{thm:alg-autom-grp} we observe that, for $t = \big(
\diag(a_1,\ldots,a_n), \lambda \big) \in T_0$,
\[
\log_q(\theta_2(\rho(t))) = (r_1 + r_2) \sum\nolimits_{i=1}^n v_\fp(a_i) =
\Big( \tbinom{m+n-2}{n-1} + \tbinom{m+n-1}{n-1} \Big)
\sum\nolimits_{i=1}^n v_\fp(a_i),
\]
and hence, using \eqref{equ:long-formula},
\begin{multline} \label{equ:theta-2} \log_q
  \big(\theta_2(\rho(\xi(\pi))) \big) = \Big( \tbinom{m+n-2}{m-1} +
  \tbinom{m+n-1}{m} \Big) \\
  \cdot \left( \big( (m-1)n + l \big) \left( \sum\nolimits_{l=1}^{n-1}
      e_l \right) + n(e_0 + e_n) \right).
\end{multline}

\smallskip

We are ready to apply Proposition~\ref{proposition:formula-integral}
to the group~$\aH \cong_\Z \aH_0$.  Writing $\beta_0 = \prod_{\beta
  \in \Phi^+} \beta$ and denoting by $\ell$ the standard length
function on the Weyl group $W \cong \mathrm{Sym}(n)$, we
obtain
\begin{align*}
  \mathcal{Z}_{\aH,\fp,\theta}(s) & = \sum_{w \in W} q^{-\ell(w)} \,
  \sum_{\xi \in w \Xi_w^+} q^{\langle \beta_0, \xi \rangle} \, \lvert
  \det \big(\rho(\xi(\pi)) \big) \rvert^s \,
  \theta_1\big(\rho(\xi(\pi)) \big) \, \theta_2
  \big(\rho(\xi(\pi)) \big) \\
  & = \frac{\sum_{w \in W} q^{-\ell(w)} \prod_{i=1}^{n-1} X_i^{\,
      \nu_i(w)}}{\prod_{i=1}^{n-1} (1-X_i)} \cdot
  \frac{1}{(1-\widetilde{X}_0)(1-\widetilde{X}_n)},
\end{align*}
where 
\[
\nu_i(w) = 
\begin{cases}
  1 & \text{if $\alpha_i \in w(\Phi^-)$,} \\
  0 & \text{otherwise},
\end{cases}
\]
each $X_i$, for $1 \le i \le n-1$, accounts for the
`$e_i$-contributions' of $\xi = \prod_{i=0}^n \xi_i^{\, e_i}$ and
$\widetilde{X}_0, \widetilde{X}_n$ account for the contributions of
$e_0,e_n$, as specified below.

We observe that $\beta_0 = \prod_{i=1}^{n-1} \alpha_i^{\, i(n-i)}$,
hence $\langle \beta_0, \xi \rangle = \sum_{i=1}^{n-1} i(n-i)e_i$.
Moreover, for $1 \leq i \leq n-1$,
\begin{align*}
  \log_q(X_i) & = i(n-i) + \underbrace{\Big( \tbinom{m+n-2}{m-1} +
                \tbinom{m+n-1}{m} \Big) \big( (m-1)n + i
                \big)}_{\text{contributions from 
                $\theta_2$ according to \eqref{equ:theta-2}}}
  \\
              & \quad + \underbrace{\sum\nolimits_{j=1}^i \Big( 1 +
                \tfrac{(m-1)(i-j+1)}{n-j+1} 
                \Big) \tbinom{m+j-2}{m-1} \tbinom{m+n-j-1}{m-1}
                }_{\text{contributions from
                $\theta_1$ according to \eqref{equ:theta-1}}} \\
              & \quad - \underbrace{\Big( - m(m-1) \tbinom{m+n-2}{m}
                + \Big( 1 + \tbinom{m+n-2}{m-1} \Big) \big( (m-1)n + i
                \big) \Big)}_{\text{contributions from
                the determinant according to \eqref{equ:coeff-i}}} s
\end{align*}  
and
\begin{align*}
  \log_q(\widetilde{X}_0) & = \underbrace{n \Big( \tbinom{m+n-2}{m-1} +
                            \tbinom{m+n-1}{m} \Big)}_{\text{contributions from
                            \eqref{equ:theta-2}}} - \underbrace{\Big(
                            \tbinom{m+n-2}{m-1} + n \Big) s}_{\text{contributions from
                            \eqref{equ:coeff-n}}}, \\
  \log_q(\widetilde{X}_n) & = \underbrace{n \Big( \tbinom{m+n-2}{m-1} +
                            \tbinom{m+n-1}{m} \Big) +
                            \tbinom{2m+n-2}{2m-1}}_{\text{contributions from
                            \eqref{equ:theta-2} and 
                            \eqref{equ:theta-1}}}
                            - \underbrace{\Big( \tbinom{m+n-1}{m} + n \Big)
                            s }_{\text{contributions from 
                            \eqref{equ:coeff-n+1}}}.
\end{align*}
In view of Remark~\ref{remark:Lie-correspondence}, this completes the
proof of Theorem~\ref{theorem:main}.  


\section{Local functional equations, abscissae of convergence and
  meromorphic continuation} \label{sec:corollaries}

In this section we analyse the explicit formulae in
Theorem~\ref{theorem:main} to deduce Corollaries~\ref{corollary:fn-eq},
\ref{corollary:mero-cont} and~\ref{cor:limit-points}. 

\begin{proof}[Proof of Corollary~\ref{corollary:fn-eq}]
  The formula we obtained in Section~\ref{sec:explicit-computation}
  for $\mathcal{Z}(s) = \mathcal{Z}_{\aH,\fp,\theta}(s)$ is classical
  and was treated by Igusa~\cite{Ig89}; also
  see~\cite[Section~2]{KlVo09}.  We follow the adapted approach given
  by du Sautoy and Lubotzky~\cite[p.~82]{dSLu96}.  Let $w_0$ denote
  the unique element of maximal length $\ell(w_0) = \binom{n}{2}$ in
  the Weyl group~$W \cong \mathrm{Sym}(n)$.  It is well-known that
  $\ell(w)+\ell(ww_0) = \lvert \Phi^+ \rvert$ and
  $w_0(\Phi^-) = \Phi^+$ for all $w\in W$; see, for instance,
  \cite[Section~1.8]{Hu90}.  The latter implies that
  $\nu_i(w)=1-\nu_i(ww_0)$ for $w \in W$ and $i \in \{1,\ldots, n-1\}$.
  Hence
  \begin{align*}
    \mathcal{Z}(s) \vert_{q\to q^{-1}} & = \frac{\sum_{w \in
                                         W} q^{\ell(w)} (-1)^{n-1}\prod_{i=1}^{n-1} X_i^{\,
                                         1-\nu_i(w)}}{\prod_{i=1}^{n-1} (1-X_i)} \cdot
                                         \frac{\widetilde{X}_0\widetilde{X}_n}{(1-\widetilde{X}_0)(1-\widetilde{X}_n)}\\ 
                                       &=(-1)^{n-1}q^{\lvert \Phi^+ \rvert}\frac{\sum_{w \in W}
                                         q^{-\ell(ww_0)} \prod_{i=1}^{n-1} X_i^{\,
                                         \nu_i(ww_0)}}{\prod_{i=1}^{n-1} (1-X_i)} \cdot
                                         \frac{\widetilde{X}_0\widetilde{X}_n}{(1-\widetilde{X}_0)(1-\widetilde{X}_n)}\\
                                       &=(-1)^{n-1}q^{\lvert \Phi^+ \rvert}\widetilde{X}_0
                                         \widetilde{X}_n\mathcal{Z}(s).
  \end{align*}
  Noting that $\lvert \Phi^+ \rvert = \binom{n}{2}$, we obtain a
  symmetry factor of $q^{a+bs}$, where the parameters $a$ and $b$ are
  given by~\eqref{equ:sym-fac-a-b}.  The special case $\fk=\Q$ yields
  the functional equation for the local
  factors~$\zeta^\wedge_{\Delta_{m,n},p}(s)$.
\end{proof}

In preparation for the proof of Corollary~\ref{corollary:mero-cont} we
derive two lemmata.

\begin{lemma} \label{lem:maximum-A-B}
  Let $A_i, B_i$, for $0 \le i \le n$, and
  $\widetilde{A}_j, \widetilde{B}_j$, for $j \in \{0,n\}$ be as in
  Theorem~\textup{\ref{theorem:main}}.   Suppose that $m \ge 2$.  Then
  \[ 
  \max \left\{ \frac{A_i+1}{B_i} \mid 1 \le i \le n-1 \right\} <
  \max \left\{ \frac{\widetilde{A}_0+1}{\widetilde{B}_0},
    \frac{\widetilde{A}_n+1}{\widetilde{B}_n} \right\}.
  \]
\end{lemma}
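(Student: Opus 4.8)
The plan is to prove the sharper statement that $(A_i+1)/B_i < \lambda$ for every $i$ with $1 \le i \le n-1$, where $\lambda := \max\{(\widetilde{A}_0+1)/\widetilde{B}_0,\ (\widetilde{A}_n+1)/\widetilde{B}_n\}$ is the right-hand side; this clearly implies the maximum over $i$ is also $< \lambda$. The mechanism is convexity in the index~$i$. I will regard $A_i$ and $B_i$ as defined by the displayed formulae of Theorem~\ref{theorem:main} for \emph{all} $i \in \{0,1,\dots,n\}$ --- consistently with the relations $(m-1)\widetilde{A}_0 = A_0$, $(m-1)\widetilde{B}_0 = B_0$, $m\widetilde{A}_n = A_n$, $m\widetilde{B}_n = B_n$ recorded there --- and show that $(A_i)_{0\le i\le n}$ is a convex sequence while $(B_i)_{0\le i\le n}$ is affine and positive.

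First I would note that $B_i = B_0 + \big(1+\binom{m+n-2}{m-1}\big)i$ is affine with positive slope, so $B_i \ge B_0 = (m-1)\widetilde{B}_0 > 0$ throughout. The substantive point is the inequality $\Delta^2 A_i := A_{i+2}-2A_{i+1}+A_i \ge 0$ for $0 \le i \le n-2$. Forming the first difference $A_{i+1}-A_i$ one finds that the quadratic term $i(n-i)$ contributes a linear term, the term $\big(\binom{m+n-2}{m-1}+\binom{m+n-1}{m}\big)((m-1)n+i)$ contributes a constant, and the inner sum contributes a telescoping sum of the quantities $d_j := \binom{m+j-2}{m-1}\binom{m+n-j-1}{m-1}$; taking one more difference and simplifying the ratio $d_{i+2}/d_{i+1}$ of binomial coefficients collapses everything to
\[
\Delta^2 A_i = -2 + \frac{m-1}{i+1}\binom{m+i-1}{m-1}\binom{m+n-i-2}{m-1}.
\]
Since $m \ge 2$ one has $\binom{m+i-1}{m-1} \ge i+1$ and, for $0 \le i \le n-2$, $\binom{m+n-i-2}{m-1} \ge \binom{m}{m-1} = m$, so the right-hand side is at least $-2 + m(m-1) \ge 0$. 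Carrying out this second-difference computation, and in particular pinning down the binomial identity that produces the clean closed form above, is the step I expect to be the main obstacle; the rest is formal.

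Granting that $(A_i)$ is convex, set $g(i) = (A_i+1) - \lambda B_i$ on $\{0,1,\dots,n\}$; it is convex, being the sum of the convex sequence $(A_i+1)$ and an affine function. From $A_0 = (m-1)\widetilde{A}_0$, $B_0 = (m-1)\widetilde{B}_0$ and $\lambda\widetilde{B}_0 \ge \widetilde{A}_0+1$ one gets $g(0) \le (A_0+1) - (m-1)(\widetilde{A}_0+1) = 2-m \le 0$; similarly, from $A_n = m\widetilde{A}_n$ and $B_n = m\widetilde{B}_n$, one gets $g(n) \le (A_n+1) - m(\widetilde{A}_n+1) = 1-m < 0$. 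Since a convex sequence lies below its chord, for $1 \le i \le n-1$ we obtain $g(i) \le \frac{(n-i)\,g(0) + i\,g(n)}{n} < 0$, the strict inequality coming from $i \ge 1$ together with $g(n) < 0$. Dividing by $B_i > 0$ gives $(A_i+1)/B_i < \lambda$, as required. I would close by remarking that the hypothesis $m \ge 2$ is used at each of the three inequalities --- in the convexity estimate and in the bounds for $g(0)$ and $g(n)$ --- which is consistent with the $m=1$ (Grenham) case being governed by a genuinely different formula.
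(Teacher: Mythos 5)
Your proof is correct, and at its core it rests on exactly the same computation as the paper's: in both cases one shows $B_i$ is affine in $i$ with positive (and constant) increment, and then establishes convexity of $A_i$ via the closed form of the second difference. Your expression
\[
\Delta^2 A_i = -2 + \frac{m-1}{i+1}\binom{m+i-1}{m-1}\binom{m+n-i-2}{m-1}
= -2 + \binom{m+i-1}{m-2}\binom{m+n-i-2}{m-1}
\]
is, after the index shift $i \mapsto i+2$, exactly the paper's
$d_i = -2 + \binom{m+i-3}{m-2}\binom{m+n-i}{m-1}$,
so the "main obstacle" you flag (the binomial simplification producing this clean form) is precisely the step carried out in the paper's proof.

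Where you diverge is the concluding step. The paper phrases the argument via the chain of secant slopes $x_i/y_i = (A_i - A_{i-1})/(B_i - B_{i-1})$ and a mediant inequality, reducing to the claim that $0 < x_1/y_1 < \cdots < x_n/y_n$, and accordingly asserts $d_i > 0$ for all $2 \le i \le n$ with the intermediate bound $d_i \ge -2 + i(n-i+2)$. You instead pass to the auxiliary convex sequence $g(i) = (A_i+1) - \lambda B_i$, note $g(0) \le 2-m \le 0$ and $g(n) \le 1-m < 0$, and invoke the chord inequality. Your packaging is arguably cleaner and, notably, more robust: for $m=2$ one has $d_n = 0$, so the paper's claimed strict positivity of $d_i$ and the bound $d_i \ge -2 + i(n-i+2)$ both break down at $i=n$ (indeed $\binom{m+i-3}{m-2}=1 < i$ when $m=2$, $i \geq 2$), whereas your argument only requires weak convexity $\Delta^2 A_i \ge 0$ together with the strictness $g(n) < 0$, and hence delivers the strict conclusion of the lemma across the full range $m \ge 2$ without any special pleading at $m=2$. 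So: same key lemma, slightly different and slightly more careful closing argument.
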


\begin{proof}
  We observe that
  \[
  \frac{\widetilde{A}_0 +1}{\widetilde{B}_0} = \frac{A_0 + (m-1)}{B_0}
  \ge \frac{A_0+1}{B_0}
  \]
  and similarly $(\widetilde{A}_n+1)/\widetilde{B}_n \ge
  (A_n+1)/B_n$.  Consequently it suffices to show that
  \[
  \max \left\{ \frac{A_i+1}{B_i} \mid 1 \le i \le n-1 \right\} < \max
  \left\{ \frac{A_0+1}{B_0},  \frac{A_n+1}{B_n} \right\}.
  \]
  Observe that for any positive numbers $a,b,x,y$ we have
  \begin{equation}\label{equ:basic-inequ}
    \frac{x}{y} \ge \frac{a+x}{b+y} \ge \frac{a}{b} \quad \Longleftrightarrow \quad
    \frac{x}{y} \ge \frac{a}{b}, 
  \end{equation}
  and similarly with strict inequalities.  Writing
  $x_i = (A_i+1) - (A_{i-1}+1) = A_i - A_{i-1}$ and
  $y_i = B_i - B_{i-1}$ for $1 \le i \le n$, we deduce that it
  suffices to prove that
  \[
   0 < x_1/y_1 < \ldots < x_n/y_n.
  \] 
  For then \eqref{equ:basic-inequ} implies that $(A_i+1)/B_i$,
  $0 \le i \le n$, forms a convex sequence and takes its maximum at
  $i=0$ or $i=n$.

  In fact, $y_i = 1 + \binom{m+n-2}{m-1}$ is constant and, for $1 \le
  i \le n$,
  \begin{multline*}
    x_i = \Big( \tbinom{m+n-2}{m-1} + \tbinom{m+n-1}{m} + n + 1 \Big)
    -2i \\
    + \tbinom{m+i-2}{m-1} \tbinom{m+n-i-1}{m-1} + \sum\nolimits_{j=1}^i
    \tfrac{m-1}{n-j+1} \tbinom{m+j-2}{m-1} \tbinom{m+n-j-1}{m-1}.
  \end{multline*}
  Clearly, $x_1 \geq n-1 > 0$, and it remains to check that $d_i = x_i - x_{i-1}$
  is positive for $2 \le i \le n$.  Indeed, we obtain
  \begin{align*}
    d_i & = -2 + \left( \frac{m+n-i}{n-i+1} -
          \frac{(i-1)(m+n-i)}{(m+i-2)(n-i+1)} \right)
          \binom{m+i-2}{m-1} \binom{m+n-i-1}{m-1} \\
        & = -2 + \frac{(m+n-i) (m-1)}{(n-i+1) (m+i-2)} \,
          \frac{(m+i-2)!}{(m-1)! (i-1)!} \,
          \frac{(m+n-i-1)!}{(m-1)!(n-i)!} \\
        & = -2 + \frac{(m+i-3) (m+i-4) \cdots i}{(m-2)!} \, \frac{(m+n-i)(m+n-i-1)
          \cdots (n-i+2)}{(m-1)!} \\
        & \ge -2 + i (n-i+2) \\
        & > 0. \qedhere
  \end{align*}
\end{proof}

\begin{lemma} \label{lem:appendix-inequ} Let
  $\widetilde{A}_j, \widetilde{B}_j$, for $j \in \{0,n\}$ be as in
  Theorem~\textup{\ref{theorem:main}}, and let $\mathcal{C}$ be as in
  Corollary~\ref{corollary:mero-cont}.  Suppose that $m \ge 2$.  Then
  \[ 
  \frac{\widetilde{A}_0+1}{\widetilde{B}_0} <
  \frac{\widetilde{A}_n+1}{\widetilde{B}_n} \qquad \Longleftrightarrow
  \qquad \frac{\widetilde{A}_0+1}{\widetilde{B}_0} \le
  \frac{\widetilde{A}_n+1}{\widetilde{B}_n} \qquad \Longleftrightarrow
  \qquad (m,n) \not\in \mathcal{C}.
    \]
\end{lemma}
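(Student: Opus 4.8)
The plan is to reduce the claimed chain of equivalences to a single inequality between explicit polynomials in~$m$ and~$n$ and then to analyse the sign of that polynomial. First I would write down the quantities from Theorem~\ref{theorem:main}: $\widetilde{A}_0 = n\big(\tbinom{m+n-2}{m-1}+\tbinom{m+n-1}{m}\big)$, $\widetilde{B}_0 = \tbinom{m+n-2}{m-1}+n$, $\widetilde{A}_n = \widetilde{A}_0 + \tbinom{2m+n-2}{2m-1}$, and $\widetilde{B}_n = \widetilde{B}_0 + \tbinom{m+n-2}{m}$. The inequality $\tfrac{\widetilde{A}_0+1}{\widetilde{B}_0} \le \tfrac{\widetilde{A}_n+1}{\widetilde{B}_n}$ is, after clearing the (positive) denominators, equivalent to $(\widetilde{A}_0+1)\widetilde{B}_n \le (\widetilde{A}_n+1)\widetilde{B}_0$; writing $\widetilde{A}_n = \widetilde{A}_0 + \delta_A$ and $\widetilde{B}_n = \widetilde{B}_0 + \delta_B$ with $\delta_A = \tbinom{2m+n-2}{2m-1}$ and $\delta_B = \tbinom{m+n-2}{m}$, this collapses to
\[
(\widetilde{A}_0+1)\,\delta_B \;\le\; \delta_A\,\widetilde{B}_0,
\]
i.e.\ to a single inequality with no denominators. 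Since $\widetilde{A}_0+1$, $\widetilde{B}_0$, $\delta_A$, $\delta_B$ are all strictly positive for $m\ge 2$, $n\ge 2$, the strict and non-strict versions differ only when equality holds; I would note that equality forces a Diophantine relation among binomial coefficients that one checks never occurs in the relevant range (this handles the first ``$\Longleftrightarrow$'' essentially for free, modulo confirming that the equality locus is empty). So the whole statement comes down to: $\tfrac{\widetilde{A}_0+1}{\widetilde{B}_0} < \tfrac{\widetilde{A}_n+1}{\widetilde{B}_n}$ if and only if $(m,n)\notin\mathcal{C}$.

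Next I would set $P(m,n) = \delta_A\,\widetilde{B}_0 - (\widetilde{A}_0+1)\,\delta_B$ and aim to show $P(m,n) > 0$ exactly when $(m,n)\notin\mathcal{C}$. Substituting the binomial expressions, $P$ is, for each fixed~$m$, a polynomial in~$n$ whose leading behaviour is dominated by the term $\delta_A\cdot n$ versus $\widetilde{A}_0\cdot\delta_B \sim n\tbinom{m+n-2}{m-1}\tbinom{m+n-2}{m}$. Comparing orders of growth in~$n$: $\delta_A = \tbinom{2m+n-2}{2m-1}$ grows like $n^{2m-1}/(2m-1)!$, while $\tbinom{m+n-2}{m-1}\tbinom{m+n-2}{m}$ grows like $n^{2m-1}/\big((m-1)!\,m!\big)$; since $(2m-1)! < (m-1)!\,m!\,\binom{2m-1}{m}$... — more precisely one compares the constants $\tfrac{1}{(2m-1)!}$ against $\tfrac{n+1}{(m-1)!\,m!}$ after restoring the extra factor of~$n$, and finds that for $m\ge 6$ the second dominates for \emph{all} $n\ge 2$, so $P(m,n)<0$ never happens — wait, I must be careful about signs. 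Let me restate: I expect that for each fixed $m$, $P(m,n)$ is negative (so the strict inequality fails, i.e.\ $(m,n)\in\mathcal{C}$) for small~$n$ and eventually turns positive as $n\to\infty$, except that for $m$ large the crossover happens already at $n = m$ (the smallest admissible value here, since the groups $\Delta_{m,n}$ and the set $\mathcal{C}$ only constrain $n\ge 2$, but the arithmetic of $\mathcal{C}$ singles out $n\ge 3$ for $m\in\{2,3\}$ and ever-shrinking windows for $m=4,5$, and \emph{no} exceptions for $m\ge 6$). Concretely, for $m\in\{2,3\}$ I would show $P(m,n)<0\iff n\ge 3$ by direct polynomial factorisation in~$n$; for $m=4$ show the real root of $P(4,n)$ lies between $38$ and $39$; for $m=5$ that the relevant root of $P(5,n)$ lies between $9$ and $10$; and for $m\ge 6$ that $P(m,n)>0$ for every $n\ge 2$, by an explicit lower bound on $P$ obtained from the binomial-ratio estimates above (e.g.\ bounding $\delta_A/\big(\tbinom{m+n-2}{m-1}\tbinom{m+n-2}{m}\big)$ from below uniformly).

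The main obstacle is the finite but genuinely irregular exceptional set $\mathcal{C} = (\{2,3\}\times\N_{\ge 3}) \cup (\{4\}\times\{4,\dots,38\}) \cup (\{5\}\times\{5,\dots,9\})$: the ``eventually positive'' behaviour is clean asymptotically, but pinning down the \emph{exact} crossover values $38$ and $9$ for $m=4,5$ requires either a careful monotonicity argument in~$n$ for $P(m,n)$ together with explicit evaluation at the two boundary points $n=38,39$ (resp.\ $n=9,10$), or an honest if tedious root-isolation. I would organise this by first proving that for fixed $m$ the sign of $P(m,n)$ changes at most once as $n$ increases — e.g.\ by showing the ratio $\delta_A\widetilde{B}_0 / \big((\widetilde{A}_0+1)\delta_B\big)$ is monotone in~$n$, which reduces to a ratio-of-consecutive-terms estimate and is the technical heart — and then the exceptional set is determined by finitely many explicit numerical checks at the threshold values, which I would relegate to a short computation. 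The uniform statement ``$P(m,n)>0$ for all $n\ge 2$ when $m\ge 6$'' I would get by checking $n=2$ by hand (this is where the constraint actually bites) and invoking the at-most-one-sign-change property together with the positive asymptotics.
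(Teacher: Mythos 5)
Your opening reduction — clearing denominators, writing $\widetilde{A}_n = \widetilde{A}_0 + \delta_A$, $\widetilde{B}_n = \widetilde{B}_0 + \delta_B$, and landing on the single inequality $\delta_A\,\widetilde{B}_0 \ge (\widetilde{A}_0+1)\,\delta_B$ — is exactly what the paper does: its function $F(m,n)$ is $\delta_A/\delta_B - (\widetilde{A}_0+1)/\widetilde{B}_0$, and its polynomial $f_m(n)$ is your $P(m,n)$ up to a positive factor. The first equivalence is indeed handled by noting equality never occurs, as you suggest.

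However, the sign analysis you propose rests on a claim that is false. You expect $P(m,n)$ to be negative for small $n$ and eventually positive, with ``at most one sign change as $n$ increases,'' to be proved by a monotone-ratio argument. In fact the sign pattern is the opposite at the low end and non-monotone in between: the paper's Table~1 shows $f_2(2)=30>0$ yet $f_2(n)<0$ for all $n\ge 3$ (leading coefficient of $f_2$, $f_3$ is \emph{negative}, so for $m\in\{2,3\}$ the polynomial is eventually negative, not positive); and for $m=4$ one has $f_4(n)>0$ at $n\in\{2,3\}$, $f_4(n)<0$ on $\{4,\dots,38\}$, and $f_4(n)>0$ again from $n=39$ on — \emph{two} sign changes, and analogously for $m=5$ with the window $\{5,\dots,9\}$. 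A monotone ratio would force a single crossing; here there are two. Worse, your proposed endgame for $m\ge 6$ (``check $n=2$, invoke at-most-one-sign-change and positive asymptotics'') would, applied verbatim at $m=4$, wrongly conclude $P(4,n)>0$ for all $n\ge 2$, since $P(4,2)>0$ and the leading term is positive. So the proposed monotonicity lemma cannot be what distinguishes $m=4,5$ from $m\ge 6$, and it is not available.

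The paper instead proceeds case by case: for $2\le m\le 6$ it writes out $f_m$ explicitly and combines sign evaluations at the threshold integers with a sign condition on $f_m'$ beyond the last crossing (note $m=6$ must be treated this way since $f_6$ still has negative coefficients, at $t^7$ and $t^6$); for $7\le m\le 29$ it checks directly that all coefficients of $f_m$ are non-negative; and for $m\ge 30$ it proves non-negativity of the coefficients via the decomposition $f_m = g_m + \big(\prod_{i=0}^{m-2}(t+i)\big)h_m$ together with elementary bounds on the offending low-degree coefficients of $g_m$ and $h_m$. Your asymptotic comparison of $\binom{2m+n-2}{2m-1}$ against $\binom{m+n-2}{m-1}\binom{m+n-2}{m}$ also needs care: it predicts the leading coefficient has a fixed sign, but that sign actually flips between $m=3$ and $m=4$, which your discussion does not register. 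So the reduction is fine and shared with the paper, but the proposed sign-analysis strategy has a genuine structural gap and would not deliver the exceptional set $\mathcal{C}$.
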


\begin{proof}
  Setting
  \[
  F(m,n) = \frac{\binom{2m+n-2}{2m-1}}{\binom{m+n-2}{m}} -
  \frac{\binom{m+n-2}{m-1} \left( \frac{n(n-1)}{m} + 2n \right) +
      1}{\binom{m+n-2}{m-1} + n},
  \]
  we observe from~\eqref{equ:basic-inequ} and the formulae in
  Theorem~\textup{\ref{theorem:main}} that
  \[
  \frac{\widetilde{A}_0+1}{\widetilde{B}_0} \le
  \frac{\widetilde{A}_n+1}{\widetilde{B}_n} \quad \Longleftrightarrow
  \quad \frac{\widetilde{A}_n - \widetilde{A}_0}{\widetilde{B}_n -
    \widetilde{B}_0} \ge \frac{\widetilde{A}_0 +1}{\widetilde{B}_0}
  \quad \Longleftrightarrow \quad F(m,n) \geq 0,
  \]
  and similarly with strict inequalities.  A standard calculation shows
  that
  \[
  F(m,n) = \frac{f_m(n)}{\frac{(2m-1)!}{(m-1)!} (n-1) n \left(
      \prod_{i=1}^{m-2} (n+i) + (m-1)!\right)},
  \]
  where 
  \begin{multline*}
    f_m = m t \left( \prod_{i=m-1}^{2m-2} (t+i) \right) \left(
      \prod_{i=1}^{m-2} (t+i) + (m-1)! \right) \\ - (t-1) \left(
      \prod_{i=m+1}^{2m-1} i \right) \left( t^2
      (t+2m-1)\prod_{i=1}^{m-2} (t+i) + m! \right) \in \mathbb{Z}[t].
  \end{multline*}
  We need to prove that $f_m(n) \ge 0$ if
  and only if $(m,n) \not\in \mathcal{C}$, and that in these cases
  even the strict inequality $f_m(n) > 0$ holds.  For $1 \leq m \leq 6$, we
  compute $f_m$ explicitly and a routine analysis yields the desired
  result; see Table~\ref{tab:1}.

  \begin{table}[htb!]
    \centering
    \caption{The polynomials $f_m$ for $1 \le m \le 6$}
    \label{tab:1}
    \def\arraystretch{1.05}
   \begin{tabular}{|c||l|l|}
      \hline
      $m$ & $f_m$ & comments \\
      \hline\hline
      $2$ & $-3t^4 - 2t^3 + 21t^2 + 2t + 6$ & 
                                              $f_2(2) = 
                                              30$,
                                              $f_2(3)
                                              =
                                              -96$,
      \\
          & &  $f_2'(x) < 0$
              for
              $x
              \ge
              2$ \\
      \hline
      $3$ & $-17t^5 - 64t^4 + 179t^3 + 406t^2 + 96t + 120$ & 
                                                             $f_3(2) = 
                                                             1800$,
                                                             $f_3(3)
                                                             =
                                                             -420$,
      \\
          & &  $f_3'(x) < 0$
              for
              $x
              \ge
              3$ \\
      \hline
      $4$ & $4 t^7 - 126 t^6 - 1166 t^5 + 642 t^4 + 11242 t^3$ &
                                                                 $f_4(n)
                                                                 > 0$
                                                                 for
                                                                 $n
                                                                 \in \{2,3,39\}$,
      \\
          & $\quad + 18204
            t^2 + 6480 t + 
            5040$ & $f_4(n)
                    < 0$
                    for
                    $n
                    \in
                    \{4,5,\ldots,38\}$, \\ 
          & & $f_4'(x) > 0$ for $x \ge 34$ \\
      \hline
      $5$ & $5 t^9 + 180 t^8 - 294 t^7 - 19536 t^6 - 35355 t^5$ & $f_5(n)
                                                                  > 0$
                                                                  for
                                                                  $n
                                                                  \in \{2,3,4,10\}$,\\
          & $\quad  + 258060 t^4 + 993244 t^3
            + 1424496 t^2$ & $f_5(n)
                             < 0$
                             for
                             $n
                             \in
                             \{5,6,7,8,9\}$, \\
          & $\quad + 645120 t + 362880$ & $f_5'(x) > 0$ for $x \ge 9$
      \\
      \hline
      $6$ & $6 t^{11} + 330 t^{10} + 7920 t^9 + 53460 t^8 - 161442
            t^7$ & $f_6(2) 
                   > 0$,\\
          & $\quad  - 1429830 t^6 + 
            5025180 t^5 + 48636840 t^4$ & $f_6'(x)
                                          > 0$
                                          for
                                          $x \ge 2$, \\
          & $\quad + 125765136 t^3 + 170467200 t^2 + 90720000 t $ &
      \\
          & $\quad + 39916800$ & 
      \\
      \hline
    \end{tabular}
  \end{table}

  To conclude the proof it suffices to show that, for $m \ge 7$, the
  polynomial $f_m$ has non-negative coefficients.  For $7 \le m \le 29$
  this can be checked directly.  Now suppose that $m \ge 30$.  A short
  calculation reveals that
  \[
  f_m = g_m +  \left( \prod_{i=0}^{m-2} (t+i) \right) h_m,
  \]
  where
  \begin{align*}
  g_m & = m! \, t \prod_{i=m-1}^{2m-2} (t+i) - (2m-1)! \, (t-1), \\
  h_m & = m \prod_{i=m-1}^{2m-2} (t+i) - \tfrac{(2m-1)!}{m!} (t-1)t(t+2m-1).
  \end{align*} 
  Clearly, it is enough to prove that $g_m$ and $h_m$ have
  non-negative coefficients.  As for $g_m$, we only have to examine
  the coefficient of~$t$.  It is
  \[
  \left( m(m-1) - (2m-1) \right)
  (2m-2)! = (m^2 - 3m +1) (2m-2)! \ge 0
  \]
  for $m \geq 3$.  For $h_m$, we need to examine the coefficients of
  $t^3$ and $t^2$.  The coefficient of $t^3$ is non-negative if and
  only if
  \[
  m \sum_{\substack{a,b,c \text{ with} \\ m-1 \le a < b < c \le 2m-2}}
  \, \prod_{\substack{m-1 \le i \le 2m-2 \\\text{and } i \not =
      a,b,c}} i \ge \prod_{i=m+1}^{2m-1} i.
  \]
  Here, the left-hand side is at least
  \[   
  m \binom{m}{3} \prod_{i=m-1}^{2m-5} i = \frac{m^3 (m-1)^2 (m-2)}{6}
  \prod_{i=m+1}^{2m-5} i
  \]
  and it suffices to observe that
  \[
  \tfrac{1}{6} m^3 (m-1)^2 (m-2) \ge (2m-1)(2m-2)(2m-3)(2m-4)
  \]
  for $m \ge 10$.  Similarly, the coefficient of $t^2$ in $h_m$ is
  non-negative if and only if
  \[
  m \sum_{\substack{a,b \text{ with} \\ m-1 \le a < b \le 2m-2}}
  \, \prod_{\substack{m-1 \le i \le 2m-2 \\\text{and } i \not =
      a,b}} i \ge (2m-2) \prod_{i=m+1}^{2m-1} i.
  \]
  Here, the left-hand side is at least
  \[   
  m \binom{m}{2} \prod_{i=m-1}^{2m-4} i = \frac{m^3 (m-1)^2}{2}
  \prod_{i=m+1}^{2m-4} i
  \]
  and it suffices to observe that
  \[
  \tfrac{1}{2} m^3 (m-1)^2 \ge (2m-1)(2m-2)^2(2m-3)
  \]
  for $m \ge 30$.
\end{proof}

\begin{proof}[Proof of Corollary~\ref{corollary:mero-cont}] For $m=1$
  it is easy to check the claim directly; see
  Remark~\ref{rem:Grenham}.  Now suppose that $m \ge 2$.  Recall that
  $\beta(m,n) = \max \{ A_i / B_i \mid 1 \le i \le n-1 \}$, and for
  $w \in W$ let
  \[
  I(w) = \{ i \mid \text{$1 \le i \le n-1$ and
    $\alpha_i \in w(\Phi^-)$} \}
  \]
  denote the left descent set of $w$.  We use the Euler product
  decomposition
  $\zeta^\wedge_{\Delta_{m,n}}(s)=\prod_p
  \zeta^\wedge_{\Delta_{m,n},p}(s)$.
  Combining the formula \eqref{equ:main-thm} for the local zeta
  functions, obtained in Theorem~\ref{theorem:main}, with
  Lemmata~\ref{lem:maximum-A-B} and \ref{lem:appendix-inequ}, we see
  that it suffices to show that the `product of numerators'
  \[
  P(s) = \prod_p \Big( 1 + \sum_{w \in W \smallsetminus \{1\}}
  p^{-\ell(w)} \prod_{i \in I(w)} p^{A_i - B_i s} \Big)
  \]
  converges absolutely for $\mathrm{Re}(s) > \beta(m,n)$.  Note that
  the strict inequalities in the lemmata imply that we get a simple
  pole at $s=\alpha(m,n)$.

  Observe that an infinite product of complex numbers of the form
  $\prod_p(1+\sum_{j=1}^N z_{j,p})$ converges absolutely if and only
  if $\prod_p (1 + z_{j,p})$ converges absolutely for each
  $j \in \{1,\ldots,N\}$.  Therefore $P(s)$ converges absolutely if
  and only if
  \begin{equation*}
    P_w(s) = \prod_p \Big( 1 + 
    p^{-\ell(w)} \prod_{i \in I(w)} p^{A_i - B_i s} \Big) = \prod_p
    \left( 1 + p^{ -\ell(w) + \sum_{i \in 
          I(w)} (A_i -  B_i
        s)} \right)
  \end{equation*}
  converges absolutely for each $w \in W \smallsetminus \{1\}$.
  Clearly, $P_w(s)$ converges absolutely if and only if
  \[
  \mathrm{Re}(s) > \frac{1-\ell(w)+\sum_{i \in I(w)} A_i}{\sum_{i \in
      I(w)} B_i}.
  \]
  Noting that
  \begin{equation} \label{equ:A-B-max} \frac{1-\ell(w)+\sum_{i \in
        I(w)} A_i}{\sum_{i \in I(w)} B_i} \le \frac{\sum_{i \in I(w)}
      A_i}{\sum_{i \in I(w)} B_i} \le \max \left\{ \frac{A_i}{B_i}
      \mid i \in I(w)\right\},
  \end{equation}
  we deduce that $P(s)$ converges absolutely for
  $\mathrm{Re}(s) > \beta(m,n)$.  In fact, this method does not yield
  anything better, because each value $A_i/B_i$ is indeed attained
  with equalities in \eqref{equ:A-B-max} for a suitable simple
  reflection~$w$.
\end{proof}

\begin{proof}[Proof of Corollary~\ref{cor:limit-points}]
  We write $\widetilde{A}_n(m,n)$ and $\widetilde{B}_n(m,n)$ to
  indicate the dependence of $\widetilde{A}_n$ and $\widetilde{B}_n$
  on the parameters $m,n$.  From Corollary~\ref{corollary:mero-cont}
  we deduce that
  \begin{align*}
    \lim_{m\to \infty} \alpha(m,n) & = \lim_{m\to
                                     \infty} \big(\widetilde{A}_n
                                     (m,n)+1 \big)/\widetilde{B}_n (m,n) \\
                                   & = \lim_{m\to
                                     \infty} \frac{n \Big(
                                     \tbinom{m+n-2}{n-1} +
                                     \tbinom{m+n-1}{n-1} 
                                     \Big)+\tbinom{2m+n-2}{n-1}+1}{\tbinom{m+n-1}{n-1}
                                     + n}\\ 
                                   &= \lim_{m\to
                                     \infty} \frac{n \Big(
                                     \frac{m}{m+n-1}+1 
                                     \Big)\tbinom{m+n-1}{n-1}+\frac{2m+n-2}{m+n-1}\cdot
                                     \frac{2m+n-3}{m+n-2}\cdots\frac{2m}{m+1}
                                     \cdot\tbinom{m+n-1}{n-1}+1}    
                                     {\tbinom{m+n-1}{n-1} + n}\\
                                   & = 2n+2^{n-1}. \qedhere
  \end{align*}
\end{proof}


\end{document}